\newcommand{\R}{\mathbb{R}}
\newcommand{\D}{\mathbb{D}}
\newcommand{\C}{\mathbb{C}}
\newcommand{\E}{{\mathcal E}}
\renewcommand{\ge}{\varepsilon}
\newcommand{\im}{{\Im}}
\newcommand{\re}{{\Re}}
\newcommand{\M}{\mathcal{M}}
\newcommand{\RN}{\mathbb{R}^3}
 \newcommand{\Z}{{ \it Z}}
\newcommand{\iu}{{\rm i}}
 \let\e=\ge
\newcommand{\p}{\varphi}
 \newcommand{\intr}{\int_{\mathbb{R}^3}}
\numberwithin{equation}{section}
\newtheorem{theorem}{Theorem}[section]
\newtheorem{proposition}[theorem]{Proposition}
\newtheorem{corollary}[theorem]{Corollary}
\newtheorem{lemma}[theorem]{Lemma}
\newtheorem{definition}[theorem]{Definition}
\theoremstyle{definition}
\newtheorem{remark}[theorem]{Remark}
\newcommand{\brm}{\begin{remark}\rm}
\newcommand{\erm}{\end{remark}}
\newcommand{\bte}{\begin{theorem}}
\newcommand{\ete}{\end{theorem}}
\newcommand{\bpr}{\begin{proposition}}
\newcommand{\epr}{\end{proposition}}
\newcommand{\ble}{\begin{lemma}}
\newcommand{\ele}{\end{lemma}}
\newcommand{\beq}{\begin{equation}}
\newcommand{\eeq}{\end{equation}}
\newcommand{\bdm}{\begin{displaymath}}
\newcommand{\edm}{\end{displaymath}}
\newcommand{\W}{{\mathcal W}}
\numberwithin{equation}{section}
\let\eps=\varepsilon 
\title[NLS with magnetic field and Hartree-type nonlinearities]{Semiclassical limit
for Schr\"{o}dinger equations with magnetic field and Hartree-type nonlinearities}
\author[S.\ Cingolani]{Silvia Cingolani}
\author[S.\ Secchi]{Simone Secchi}
\author[M.\ Squassina]{Marco Squassina}
\address{Dipartimento di Matematica,
Politecnico di Bari,
Via Orabona 4, I-70125 Bari, Italy.}
\email{s.cingolani@poliba.it}
\address{Dipartimento di Matematica ed Applicazioni,
Universit\`a di Milano Bicocca,
Via R. Cozzi 53 edificio U5,  I-20125 Milano, Italy.}
\email{simone.secchi@unimib.it}
\address{Dipartimento di Informatica,
Universit\`a di Verona,
C\`a Vignal 2, Strada Le Grazie 15, I-37134 Verona, Italy.}
\email{marco.squassina@univr.it}
\thanks{The research of the first and the third
  author was partially supported by the MIUR national research project
  \textit{Variational and Topological Methods in the Study of Nonlinear
    Phenomena}, PRIN 2007. The research of the second author was partially supported by the MIUR national
  research project \textit{Variational methods and nonlinear
    differential equations}, PRIN 2006.}
\subjclass[2000]{35B40, 35K57, 35B35, 92D25}
\keywords{Hartree equations, ground states, semiclassical limit, multipeak
  solutions, variational methods.}
\begin{document}
\date{\today}

\begin{abstract}
  The semi-classical regime of standing wave solutions of a
  Schr\"odinger equation in presence of non-constant electric and
  magnetic potentials is studied in the case of non-local
  nonlinearities of Hartree type. It is show that there exists a
  family of solutions having multiple concentration regions which are located
  around the minimum points of the electric potential.
\end{abstract}
\maketitle

\medskip
\begin{center}
\begin{minipage}{11cm}
\footnotesize
\tableofcontents
\end{minipage}
\end{center}

\medskip

\section{Introduction and main result}

\subsection{Introduction}
Some years ago, Penrose derived in~\cite{p} a system of nonlinear equations by coupling the linear Schr\"odinger equation of quantum mechanics with Newton's gravitational law. Roughly speaking, a point mass interacts with a density of matter described by the square of the wave function that solves the Schr\"odinger equation.
If $m$ is the mass of the point, this interaction leads to the system in $\R^3$
\begin{equation}
    \label{sys:s-n}
\begin{cases}
\frac{\hbar^2}{2m} \Delta \psi - V(x) \psi + U \psi = 0, \\
\Delta U + 4 \pi \gamma |\psi|^2 =0,
\end{cases}
\end{equation}
where $\psi$ is the wave function, $U$ the gravitational potential energy,
$V$ a given Schr\"odinger potential, $\hbar$ the Planck constant and $\gamma = G m^2$,
$G$ being Newton's constant of gravitation.
Notice that, by means of the scaling
\begin{equation*}
\psi(x)=\frac{1}{\hbar} \frac{\hat{\psi}(x)}{\sqrt{8 \pi \gamma m}},
\quad V(x)=\frac{1}{2m} \hat{V}(x),
\quad U(x)=\frac{1}{2m}\hat{U}(x)
\end{equation*}
system~\eqref{sys:s-n} can be written, maintaining the original notations, as
\begin{equation}
    \label{sys:s-n-red}
\begin{cases}
\hbar^2 \Delta \psi - V(x) \psi + U \psi = 0, \\
\hbar^2 \Delta U + |\psi|^2 =0.
\end{cases}
\end{equation}
The second equation in~\eqref{sys:s-n-red} can be explicitly solved with
respect to $U$, so that the system turns into
the single nonlocal equation
\begin{equation} \label{eq:1.3}
\hbar^2 \Delta \psi - V(x)\psi + \frac{1}{4\pi \hbar^2} \Big( \int_{\R^3}
\frac{|\psi(\xi)|^2}{|x-\xi|} d\xi \Big)\psi =0\quad\text{in $\R^3$}.
\end{equation}
The Coulomb type convolution potential $W(x)=|x|^{-1}$ in $\R^3$  is also involved in  various physical
applications such as electromagnetic waves in a Kerr medium (in nonlinear optics), surface gravity
waves (in hydrodynamics) as well as ground states solutions (in quantum mechanical
systems).\ See for instance~\cite{abpr} for further details and~\cite{erdosyau} for the
derivation of these equations from a many-body Coulomb system.
\vskip2pt
In the present paper, we will study the semiclassical regime (namely the existence and asymptotic
behavior of solutions as $\hbar \to 0$) for a more general equation having a similar structure.
Taking $\varepsilon$ in place of $\hbar$, our model will be written as
\begin{equation}
\label{eq:Hartree}
\Big( \frac{\ge}{\iu} \nabla - A(x) \Big)^2 u + V(x) u = \frac{1}{\ge^{2}}\left( W * |u|^2 \right)u
\quad\text{in $\R^3$},
\end{equation}
in $\mathbb{R}^3$, where the convolution kernel
$W:\R^3\setminus\{0\} \to (0,\infty)$ is an even smooth
kernel, homogeneous of degree $-1$ and we denote by $\iu$ the imaginary unit. The choice of
$W(x)=|x|^{-1}$ recovers~\eqref{eq:1.3}. Equation~\eqref{eq:Hartree} is equivalent to
\begin{equation}
  \label{eq:Hartree-scaled}
  \Big( \frac{1}{\iu} \nabla - A_\varepsilon(x) \Big)^2 u + V_\varepsilon(x) u = \left( W * |u|^2 \right)u
\quad\text{in $\R^3$},
\end{equation}
where we have set $A_\varepsilon (x) = A(\varepsilon x)$ and $V_\varepsilon(x)=V(\varepsilon x)$.
\vskip2pt
The vector-valued field $A$ represents a given external magnetic potential, and forces the solutions to be, in general, complex-valued (see~\cite{cs-jmp} and references therein). To the best of our knowledge, in this framework, no previous result involving the electromagnetic field can be found in the literature. On the other hand, when $A \equiv 0$, it is known that solutions have a constant phase, so that it is not a restriction to look for real-valued solutions. In this simpler situation, we recall the results contained in \cite{moroz,mpt}, stating that at fixed $\hbar=\varepsilon$ the system~\eqref{sys:s-n-red} can be uniquely solved by radially symmetric functions. Moreover these solutions decay exponentially fast at infinity together with their first derivatives. The mere existence of one solution can be traced backed to the paper~\cite{L}.
\vskip2pt
Later on, Wei and Winter proposed in~\cite{wei} a deeper study of the multi-bumps solutions to the same system, and proved an existence result that can be summarized as follows: if $k\geq 1$ and
$P_1,\ldots,P_k\in \mathbb{R}^3$ are given non-degenerate critical points of $V$ (but local extrema are also included without any further requirements), then multi-bump solutions $\psi_\hbar$ exist that concentrate at these points when $\hbar \to 0$.
A similar equation is also studied in~\cite{macri}, where multi-bump solutions are found by some finite-dimensional reduction.
The main result about existence leans on some {\em non-degeneracy} assumption on the solutions of a limiting problem, which was actually proved in~\cite[Theorem 3.1]{wei} only in the particular case $W(x)=|x|^{-1}$ in $\R^3$. Moreover, the equation investigated in~\cite{macri} cannot be deduced from a singularly perturbed problem like~\eqref{eq:1.3},
because the terms do not scale coherently.
\vskip2pt
 For precise references to some classical works (well-posedness,
regularity, long-term behaviour) related to
the nonlinear Schr\"odinger equation with Hartree nonlinearity for Coulomb potential and $A=0$, we refer to
\cite[p.66]{sulemsulem}. We would also like
to mention the work of Carles et al.~\cite{carlesmauser}.

\subsection{Statement of the main result}
We shall study equation~\eqref{eq:Hartree-scaled} by exploiting a penalization technique
which was recently developed in~\cite{cjs}, whose main idea is searching for solutions in
a suitable class of functions whose location and shape is the one expected for the solution itself.
This approach seems appropriate, since it does not need very strong knowledge of the \emph{limiting problem}~\eqref{eq:limiting} introduced in the next section. In particular, for a general convolution kernel $W$, we still do not know if its solutions are non-degenerate. In order to state our main result (as well as the technical lemma contained in Section 2 and 3),
the following conditions will be retained:
\vskip5pt
\begin{description}
\item[(A1)] $A:\R^3\to\R^3$ is of class $C^1$.
\item[(V1)] $V:\R^3\to\R$\, is a continuous function such that
$$
0 \le V_0 = \inf_{x \in \R^3} V(x),\qquad
\liminf_{|x| \to \infty}V(x) > 0.
$$
\item[(V2)] There exist bounded disjoint open sets $O^1,\ldots,O^k$ such that
\[
0 < m_i = \inf_{x \in O^i} V(x) < \min_{x \in \partial O^i} V(x),\,\,\quad i =1,\dots ,k.
\]

\item[(W)] $W \colon \mathbb{R}^3 \setminus \{0\} \to (0,\infty)$ is an even function of class~$C^1$ such that $W(\lambda x)=\lambda^{-1} W(x)$ for any $\lambda>0$ and $x \neq 0$.
\end{description}
\vskip6pt

Convolution kernels such as $W(x)= x^2_i /|x|^3$, for $x\in\R^3\setminus\{0\}$ or, more generally,
$W(x)=W_1(x)/W_2(x)$, for $x\in\R^3\setminus\{0\}$, where $W_1,W_2$ are positive, even and
(respectively) homogeneous of degree $m$ and $m+1$ satisfy $(W)$.
\vskip3pt

For each $i \in \{1,\ldots,k\},$ we define
\[
\M^i = \{x \in O^i: V(x) = m_i\}
\]
and $\Z = \{x \in \RN : V(x) = 0\}$ and $m=\min_{i \in \{1, \ldots, k\}}m_i$.
By~\textbf{(V1)} we can fix $\widetilde m>0$ with
\begin{equation*}
    \widetilde m <\min \Big\{ m, \ \liminf_{|x|\to\infty} V(x) \Big\}
\end{equation*}
and define ${\tilde V}_\e(x) = \max\{ \widetilde m,V_\e(x)\}.$ Let
$H_\e$ be the Hilbert space defined by the completion of
$C_0^\infty(\R^3, \C)$ under the scalar product
\begin{equation*}
\langle u,v \rangle_{\e} = \re
  \int_{\R^3} \Big(\frac{1}{\iu}\nabla u - A_\e (x)u \Big)
  \Big(\overline{\frac{1}{\iu}\nabla v- A_\e (x)v}\Big) + {\tilde
    V}_\e(x) u \overline{v}  dx
\end{equation*}
and $\| \cdot  \|_\e  $  the associated norm.
\vskip8pt

The main result of the paper is the following
\begin{theorem}
    \label{main}
  Suppose that \textbf{(A)}, \textbf{(V1-2)} and
  \textbf{(W)} hold. Then for any $\e>0$ sufficiently small, there
  exists a solution $u_\e \in H_\e $ of equation $\eqref{eq:Hartree-scaled}$ such that
  $|u_\e|$ has $k$ local maximum points $x_\e^i \in O^i$ satisfying
\[
\lim_{\e \to 0}\max_{i=1,\dots,k}\operatorname{dist}(\e x^i_\e,\M^i)
= 0,
\]
and for which
\[
|u_{\e}(x)| \leq C_1 \exp \big\{-C_2 \, \min_{i=1,...,k}|x- x_\ge^i|\big\},
\]
for some positive constants $C_1$, $C_2$. Moreover for any sequence
$(\varepsilon_n) \subset (0, \varepsilon]$ with $\varepsilon_n \to 0$
there exists a subsequence, still denoted by $(\varepsilon_n)$, such that
for each $i \in \{1,\ldots,k\}$ there exist $x^i \in \M^i$ with
$\varepsilon_n x_{\varepsilon_n}^ i \to x^i,$ a constant $w_i \in \R$
and $U_i \in H^1(\R^3, \R)$ a positive least energy solution of
\begin{equation}\label{eq:1.4}
  - \Delta U_i +  m_iU_i - (W*|U_i|^2) U_i = 0, \quad U_i \in H^1(\R^3, \R);
\end{equation}
for which one has
\begin{equation}\label{eq:1.5}
  u_{\varepsilon_n}(x) = \sum_{i=1}^k U_i \left({x-x_{\varepsilon_n}^i} \right) \exp
\left(\iu\left(w_i +
      A(x^i)(x-x_{\varepsilon_n}^i) \right) \right)
  + K_n(x)
\end{equation}
where $K_n \in H_{\varepsilon_n}$ satisfies
$\|K_n\|_{H_{\varepsilon_n}} = o(1)$ as $n \to +\infty$.
\end{theorem}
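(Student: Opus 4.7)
The approach is to adapt the penalization and deformation scheme of \cite{cjs} to the complex-valued magnetic setting of \eqref{eq:Hartree-scaled}, thereby avoiding any reliance on non-degeneracy of solutions of the limiting problem \eqref{eq:1.4}, which is not available for a general homogeneous kernel $W$. Concretely, one replaces the nonlocal nonlinearity outside $\Lambda_\e:=\bigcup_{i}O^i/\e$ by a subcritical term dominated by $\widetilde m\,|u|$, yielding a $C^1$ penalized functional $\Gamma_\e$ on $H_\e$ which satisfies a local Palais--Smale condition, and then searches for critical points of $\Gamma_\e$ inside a tube of functions already carrying the expected $k$-peak shape and magnetic phase.

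As a preliminary step I would study \eqref{eq:1.4}: a standard mountain-pass argument combined with Lions' concentration-compactness (invoking the Hardy--Littlewood--Sobolev inequality and the homogeneity of $W$) yields, for each $i$, a positive real-valued least-energy solution $U_i$ with exponential decay, and a mountain-pass level $E_{m_i}$. The ansatz set is then
\[
X_\e=\Big\{\sum_{i=1}^k U_i(\cdot-y_i/\e)\exp\bigl(\iu A(y_i)(\cdot-y_i/\e)\bigr)+\phi : y_i\in O^i,\ \|\phi\|_\e\le\rho\Big\},
\]
and a gluing estimate controlling the cross-interactions of the form $\int W*\!\bigl(U_i^2(\cdot-y_i/\e)\bigr)\,U_j^2(\cdot-y_j/\e)\,dx$, via the decay of $W$ and of $U_i$, shows that an appropriate min-max level $c_\e$ on $X_\e$ satisfies $c_\e=\sum_i E_{m_i}+o(1)$ and lies strictly below the boundary energy. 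A pseudo-gradient flow respecting the tubes $O^i/\e$ then produces a critical point $u_\e$ of $\Gamma_\e$ at level $c_\e$. The concentration statements and the expansion \eqref{eq:1.5} follow from a local profile decomposition around each peak: the diamagnetic inequality $|\nabla|u_\e||\le|(\tfrac{1}{\iu}\nabla-A_\e)u_\e|$ localises the modulus $|u_\e|$, the local gauge change $u_\e(x)\mapsto u_\e(x)\,e^{-\iu A(x^i)(x-x_\e^i)}$ removes the oscillation, and the limit identifies (modulo a residual constant phase $w_i$) with a translate of $U_i$.

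The crux of the argument, and the main obstacle, is to show that the penalization is inactive on $u_\e$---equivalently, that the stated exponential decay $|u_\e(x)|\le C_1\exp(-C_2\min_i|x-x_\e^i|)$ holds---so that $u_\e$ actually solves \eqref{eq:Hartree-scaled}. One proceeds via the magnetic Kato inequality (which yields a subharmonicity property for $|u_\e|$ once the linear terms outside the peaks are absorbed into $\widetilde m$) together with a comparison principle against the barrier $\exp(-c|x-x_\e^i|)$, a super-solution of $-\Delta w+\widetilde m\, w=0$ on $\R^3\setminus\bigcup_i B_R(x_\e^i)$. Unlike the local case, however, the right-hand side of the equation is never compactly supported: the Hartree potential $W*|u_\e|^2$ must be shown to be uniformly small away from the peaks. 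This requires a splitting of the convolution into near-field and far-field contributions, using the homogeneity $W(\lambda x)=\lambda^{-1}W(x)$ and the decay of the profiles, followed by a Moser-type iteration adapted to the nonlocal setting. Once this decay is in place, the penalization is trivially inactive and the full statement of the theorem, including the asymptotic expansion \eqref{eq:1.5}, follows.
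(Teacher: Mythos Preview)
Your overall strategy matches the paper's: penalize as in \cite{cjs}, build a tube $X_\e^d$ of $k$-peak functions carrying the magnetic phase, run a min-max to produce critical points of the penalized functional $\Gamma_\e$, and then use Kato's inequality plus a comparison principle to obtain the decay that deactivates the penalty. One difference worth flagging: the penalization you describe is of del~Pino--Felmer type (modify the nonlinearity outside the wells), whereas the paper uses the Byeon--Wang device of \emph{adding} an extrinsic term $Q_\e(u)=\bigl(\int_{\R^3}\chi_\e|u|^2\,dx-1\bigr)_+^{5/2}$ to the functional while leaving the Hartree nonlinearity intact. For a nonlocal term the additive route is cleaner, since a pointwise truncation of $(W*|u|^2)u$ is awkward to formulate; if you keep your version you must say precisely how the convolution is cut.

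There is also a genuine gap. Assumption \textbf{(V1)} allows $\inf V=0$, and the paper sets $\Z=\{x:V(x)=0\}$. Your barrier $e^{-c|x-x_\e^i|}$ is a supersolution of $-\Delta w+\widetilde m\,w=0$, but after Kato's inequality the linear coefficient in the differential inequality for $|u_\e|$ is $V_\e$, not $\widetilde m$; on $(\Z^\beta)_\e$ this coefficient can be arbitrarily small and your comparison fails. The paper closes this in two stages (Section~3.4): first, ordinary comparison away from both the peaks and $\Z$ gives $|u_\e(x)|\le C\exp\bigl(-c\,\operatorname{dist}(x,(\M^{2\beta})_\e\cup(\Z^\beta)_\e)\bigr)$; second, on each connected component $H^i$ of $\operatorname{int}(\Z^{3\delta})$, the boundedness of $Q_\e(u_\e)$ forces $\|u_\e\|_{L^3((H^i)_\e)}\le C\e^{3/\mu}$, hence by Hardy--Littlewood--Sobolev $\|W*|u_\e|^2\|_{L^\infty((H^i)_\e)}\le C\e^6$, and one compares $|u_\e|$ against a suitably normalized first Dirichlet eigenfunction of $-\Delta$ on $(H^i)_\e$ (eigenvalue $\lambda_1^i\e^2$) to conclude $|u_\e|\le C e^{-c/\e}$ on $(\Z^{2\beta})_\e$. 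Without this eigenfunction step---or an equivalent device to handle the region where $V$ degenerates---your decay argument does not cover all of $\R^3\setminus\bigcup_i B_R(x_\e^i)$, and you cannot conclude that the penalization is inactive.
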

\vskip8pt

The one and two dimensional cases would require a separate analysis in the construction of
the penalization argument (see e.g.\ \cite{BJT} for a detailed discussion).
The study of the cases of dimensions larger than three is less interesting from the
physical point of view. Moreover, having in mind the soliton dynamics as a possible further development,
in dimensions $N\geq 4$ the time dependent Schr\"odinger
equation with kernels, say, of the type $W(x)=|x|^{2-N}$ does not have global existence in time for all $H^1$
initial data (see e.g.\ \cite[Remark 6.8.2, p.208]{cazenavB}) as well as the
heuristic discussion in the next section).

\subsection{A heuristic remark: multi-bump dynamics}
We could also think of Theorem~\ref{main} as the starting
point in order to rigorously justify a multi-bump soliton dynamics
for the full Schr\"odinger  equation with an external magnetic field
\begin{equation}
\label{FullHartree}
\begin{cases}
{\iu}\ge\partial_t u+\frac{1}{2}\left( \frac{\ge}{\iu} \nabla - A(x) \right)^2 u + V(x) u = \frac{1}{\ge^{2}}\left( W * |u|^2 \right)u
\quad & \text{in $\R^3$}, \\
u(x,0)=u_0(x)\quad & \text{in $\R^3$}.
\end{cases}
\end{equation}
We describe in the following what we expect to hold
(the question is open even for $A=0$, see the discussion by J.\ Fr\"ohlich et al.\ in~\cite{FTY}).
Given $k\geq 1$ positive numbers $g_1,\dots,g_k$, if ${\mathcal E}:H^1(\R^3)\to\R$ is defined as
$$
{\mathcal E}(u)=\frac{1}{2}\int_{\R^3} |\nabla u|^2dx-\frac{1}{4}\int_{\R^6} W(x-y)|u(x)|^2|u(y)|^2dxdy,
$$
let $U_j:\R^3\to\R$ $(j=1,\dots,k)$ be the solutions to the minimum problems
\begin{equation*}
{\mathcal E}(U_j)=\min\{{\mathcal E}(u): u\in H^1(\R^3),\,\|u\|_{L^2}^2=g_j\},
\end{equation*}
which solve the equations
\begin{equation*}
-\frac{1}{2}\Delta U_j+m_jU_j=(W*|U_j|^2)U_j\qquad\text{in $\R^3$},
\end{equation*}
for some $m_j\in\R$. Consider now in~\eqref{FullHartree} an initial datum of the form
\begin{equation*}
u_0(x)=\sum_{j=1}^k U_j\Big(\frac{x-x_0^j}{\eps}\Big)
e^{\frac{{\rm i}}{\eps}[A(x_0^j)\cdot(x-x_0^j)+x\cdot\xi_0^j]},\quad x\in\R^3,
\end{equation*}
where $x_0^j\in\R^3$ and $\xi_0^j\in\R^3$ ($j=1,\dots,k$) are initial position and velocity
for the ODE
\begin{equation}
    \label{newtonODE}
    \begin{cases}
        \dot x_j(t)=\xi_j(t), & \\
        \noalign{\vskip3pt}
        \dot \xi_j(t)=-\nabla V(x_j(t))-
\eps\sum\limits_{i\neq j}^k m_i\nabla W(x_j(t)-x_i(t))-\xi_j(t)\times B(x_j(t)), & \\
        x_j(0)=x_0^j, \quad
        \xi_j(0)=\xi_0^j,\qquad
j=1,\dots,k,
    \end{cases}
\end{equation}
with $B=\nabla \times A$.
The systems can be considered as a mechanical system of
$k$ interacting particles of mass $m_i$ subjected to an external potential as well as
a mutual Newtonian type interaction. Therefore, the conjecture it that, under suitable assumptions, the following
representation formula might hold
\begin{equation}
    \label{conjformula}
u_{\eps}(x,t)=\sum_{j=1}^k U_j\Big(\frac{x-x_j(t)}{\eps}\Big)
e^{\frac{\iu}{\eps}[A(x_j(t))\cdot  (x-x_j(t))+x\cdot  \xi_j(t)+\theta_{\eps}^j(t)]}+\omega_\eps,
\end{equation}
locally in time, for certain phases $\theta_\eps^i:\R^+\to[0,2\pi)$, where $\omega_\eps$ is small
(in a suitable sense) as $\eps\to 0$, provided that the centers $x_0^j$ in the initial data are chosen
sufficiently far from each other. Now, neglecting as $\eps\to 0$ the interaction term ($\eps$-dependent)
$$
\eps\sum\limits_{i\neq j}^k m_i\nabla W(x_j(t)-x_i(t))
$$
in the Newtonian system~\eqref{newtonODE} and taking
$$
x_0^1,\dots,x_0^k\in\R^3: \,\,\nabla V(x_0^j)=0\quad\text{and}\quad
\xi_0^j=0,\,\,\quad\text{for all $j=1,\dots,k$},
$$
then the solution of~\eqref{newtonODE} is
$$
x_j(t)=x_0^j,\,\,\,\xi_j(t)=0,\,\,\quad\text{for all $t\in[0,\infty)$ and $j=1,\dots,k$},
$$
so that the representation formula~\eqref{conjformula} reduces, for $\eps=\eps_n\to 0$,
\begin{equation*}
u_{\eps_n}(x,t)=\sum_{j=1}^k U_j\Big(\frac{x-x^j_0}{\eps_n}\Big)
e^{\frac{\iu}{\eps_n}[A(x^j_0)\cdot  (x-x^j_0)+\theta_{\eps_n}^j(t)]}+\omega_{\eps_n},
\end{equation*}
namely to formula~\eqref{eq:1.5}
up to a change in the phase terms and up to replacing $x$ with $\eps_n x$ and $x_0^j$
with $\eps_n x_{\eps_n}^j$ for all $j=1,\dots,k$.

\vskip15pt
\begin{center}\textbf{Plan of the paper.}\end{center}
In Section 2 we obtain several results about the structure of the solutions of the limiting problem~\eqref{eq:1.4}.
In particular, we study the compactness of the set of real ground states solutions and we achieve
a result about the orbital stability property of these solutions for the Pekar-Choquard equation.
In Section 3 we perform the penalization scheme. In particular we obtain various energy
estimates in the semiclassical regime $\eps\to 0$ and we get a Palais-Smale condition for the
penalized functional which allows to find suitable critical points inside the concentration set.
Finally we conclude the proof of Theorem~\ref{main}.

\vskip15pt
\begin{center}\textbf{Main notations.}\end{center}
\begin{enumerate}
\item $\iu$ is the imaginary unit.
\item The complex conjugate of any number $z\in\C$ is denoted by $\bar z$.
\item The real part of a number $z\in\C$ is denoted by $\Re z$.
\item The imaginary part of a number $z\in\C$ is denoted by $\Im z$.
\item The symbol $\R^+$ (resp.\ $\R^-$) means the positive real line $[0,\infty)$ (resp.\ $(-\infty,0]$).
\item The ordinary inner product between two vectors $a,b\in\R^3$ is denoted by $\langle a \mid b \rangle$.
\item The standard $L^p$ norm of a function $u$ is denoted by $\|u\|_{L^p}$.
\item The standard $L^\infty$ norm of a function $u$ is denoted by $\|u\|_{L^\infty}$.
\item The symbol $\Delta$ means $D^2_{x_1}+D^2_{x_2}+D^2_{x_3}$.
\item The convolution $u*v$ means $(u*v)(x)=\int u(x-y)v(y)dy$.
\end{enumerate}
\smallskip

\section{Properties of the set of ground states}

For any positive real number $a$, the limiting equation for the
Hartree problem~\eqref{eq:Hartree} is
\begin{equation}
  \label{eq:limiting}
  -\Delta u +au=\left( W * |u|^2 \right) u\qquad\text{in $\R^3$}.
\end{equation}

\subsection{A Pohozaev type identity}

We now give the statement of a useful identity satisfied by solutions
to problem~\eqref{eq:limiting}.

\begin{lemma}
\label{pohoz}
Let $u \in H^1(\R^3,\mathbb{C})$ be a solution to~\eqref{eq:limiting}. Then
\begin{equation}
\label{eq:poho}
\frac{1}{2}\int_{\R^3} |\nabla u|^2 \, dx + \frac{3}{2}a \int_{\R^3} |u|^2  dx
= \frac{5}{4} \int_{\R^3 \times \R^3} W(x-y) |u(x)|^2 |u(y)|^2  dx dy.
\end{equation}
\end{lemma}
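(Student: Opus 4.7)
The standard route is to multiply the equation by $\overline{x\cdot\nabla u}$, integrate over $\R^{3}$ (truncated to $B_{R}$ at first), take real parts, and pass to the limit $R\to\infty$. To legitimize this I would first upgrade the regularity of $u$: since $W$ behaves like $|x|^{-1}$ both at the origin and at infinity, Hardy--Littlewood--Sobolev yields $W*|u|^{2}\in L^{\infty}(\R^{3})$ for every $u\in H^{1}(\R^{3})$, so classical elliptic regularity promotes $u$ to $H^{2}_{\mathrm{loc}}\cap C^{1,\alpha}_{\mathrm{loc}}$, and a standard barrier/subsolution argument applied to $-\Delta|u|+(a-W*|u|^{2})|u|\le 0$ outside a large ball produces exponential decay of $|u|$ and $|\nabla u|$. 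These estimates make the multiplier $\overline{x\cdot\nabla u}$ admissible and ensure that the boundary contributions on $\partial B_{R}$ vanish in the limit.

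For the two linear contributions, a single integration by parts combined with the pointwise identity $\Re(v\,\overline{\partial_{j}v})=\tfrac{1}{2}\partial_{j}|v|^{2}$ gives, in dimension three,
$$
\Re\!\int_{\R^{3}}(-\Delta u)\,\overline{x\cdot\nabla u}\,dx=-\tfrac{1}{2}\int_{\R^{3}}|\nabla u|^{2}\,dx,\qquad a\,\Re\!\int_{\R^{3}}u\,\overline{x\cdot\nabla u}\,dx=-\tfrac{3a}{2}\int_{\R^{3}}|u|^{2}\,dx,
$$
the factors $-\tfrac{1}{2}$ and $-\tfrac{3a}{2}$ being the general coefficients $\tfrac{2-N}{2}\|\nabla u\|_{L^{2}}^{2}$ and $-\tfrac{Na}{2}\|u\|_{L^{2}}^{2}$ specialized to $N=3$.

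The nonlocal term is the crux of the argument. Setting $\rho=|u|^{2}$ and $D(u)=\iint_{\R^{3}\times\R^{3}}W(x-y)\,\rho(x)\rho(y)\,dx\,dy$, one has
$$
\Re\!\int_{\R^{3}}(W*\rho)\,u\,\overline{x\cdot\nabla u}\,dx=\tfrac{1}{2}\iint_{\R^{3}\times\R^{3}}W(x-y)\,\rho(y)\,x\cdot\nabla_{x}\rho(x)\,dx\,dy,
$$
and integration by parts in $x$ produces $-\tfrac{3}{2}D(u)-\tfrac{1}{2}I$, where $I=\iint x\cdot\nabla W(x-y)\,\rho(x)\rho(y)\,dx\,dy$. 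Here I would exploit two features of $W$ from hypothesis \textbf{(W)}: since $W$ is even, $\nabla W$ is odd, so the relabelling $(x,y)\leftrightarrow(y,x)$ yields $I=-\iint y\cdot\nabla W(x-y)\,\rho(x)\rho(y)\,dx\,dy$, whence $2I=\iint(x-y)\cdot\nabla W(x-y)\,\rho(x)\rho(y)\,dx\,dy$; and since $W$ is homogeneous of degree $-1$, Euler's identity gives $z\cdot\nabla W(z)=-W(z)$, so $I=-\tfrac{1}{2}D(u)$. Consequently the nonlinear term equals $-\tfrac{5}{4}D(u)$, and summing the three contributions produces exactly \eqref{eq:poho}.

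The main technical obstacle is the justification of the truncated integration by parts; this reduces to the decay estimates established in the first step and is routine in the Hartree setting. The conceptual heart of the proof is instead the symmetrization of $I$ combined with Euler's identity, which is precisely where both the evenness and the degree $-1$ homogeneity assumed in \textbf{(W)} enter, and which fixes the specific coefficients $\tfrac{1}{2},\tfrac{3}{2},\tfrac{5}{4}$ appearing in the identity.
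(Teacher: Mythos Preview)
Your proposal is correct and follows essentially the same route as the paper: multiply by $\overline{x\cdot\nabla u}$, integrate by parts to obtain the two linear contributions, and for the nonlocal term use the symmetrization $(x,y)\leftrightarrow(y,x)$ together with Euler's identity $z\cdot\nabla W(z)=-W(z)$ to evaluate $\iint x\cdot\nabla W(x-y)\,|u(x)|^{2}|u(y)|^{2}\,dx\,dy$. The only cosmetic difference is in the justification: the paper appeals to a density argument (``it is enough to prove it for smooth functions''), whereas you bootstrap regularity and decay of the actual solution and truncate to $B_{R}$; both are standard and lead to the same computation.
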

\begin{proof}
The proof is straightforward, and we include it just for
the sake of completeness. It is enough to prove it for smooth functions, using then a standard density argument.
We multiply equation~\eqref{eq:limiting}
by $\langle x \mid \overline{\nabla u} \rangle$. Notice that
\begin{align}
\label{eq:a}
    \Re \left(\Delta u \langle x \mid \overline{\nabla u} \rangle \right) &= \operatorname{div} \Big( \Re \left(\langle x \mid \overline{\nabla u} \rangle \nabla u \right) - \frac{1}{2} |\nabla u|^2 x \Big) + \frac{1}{2}|\nabla u|^2,  \\
    \label{eq:b}
    \Re \left(-a u \langle x \mid \overline{\nabla u} \rangle \right)&= -a \operatorname{div} \Big( \frac{1}{2} |u|^2 x \Big) + \frac{3}{2} a |u|^2, \\\
    \varphi(x) \Re \left(u \langle x \mid \overline{\nabla u} \rangle \right) &= \operatorname{div}\Big( \frac{1}{2} |u|^2 \varphi(x) x \Big) - \frac{1}{2} |u|^2 \operatorname{div} \left( \varphi(x)x \right), \label{eq:c}
\end{align}
where $\varphi(x)=\int_{\R^3} W(x-y) |u(y)|^2 dy$. We can easily obtain that
\begin{align*}
  \operatorname{div} ( \varphi(x)x ) &= \sum_{i=1}^3 \frac{\partial}{\partial x_i} \Big( x_i \int_{\R^3} W(x-y) |u(y)|^2\, dy \Big) \\
  &= N \int_{\R^3} W(x-y) |u(y)|^2 \, dy +\int_{\R^3} \langle \nabla W(x-y) \mid x \rangle |u(y)|^2  dy.
  \end{align*}
Summing up~\eqref{eq:a},~\eqref{eq:b} and~\eqref{eq:c} and integrating by parts, we reach the identity
\begin{multline}\label{eq:3}
\frac{1}{2} \int_{\R^3} |\nabla u|^2 \, dx + \frac{3}{2} a \int_{\R^3} u^2 \, dx - \frac{3}{2} \int_{\R^3 \times \R^3} W(x-y) |u(x)|^2 |u(y)|^2 \, dx \, dy  \\ {} - \frac{1}{2} \int_{\R^3 \times \R^3} \langle \nabla W(x-y) \mid x \rangle |u(x)|^2 |u(y)|^2  dx  dy =0.
\end{multline}
By exchanging $x$ with $y$, we find that
\begin{multline*}
\int_{\R^3 \times \R^3} \langle \nabla W(x-y) \mid x \rangle |u(x)|^2 |u(y)|^2 \, dx \, dy = \\
- \int_{\R^3 \times \R^3} \langle \nabla W(x-y) \mid y \rangle |u(x)|^2 |u(y)|^2 \, dx \, dy.
\end{multline*}
Therefore,
\begin{multline*}
\int_{\R^3 \times \R^3} \langle \nabla W(x-y) \mid x \rangle |u(x)|^2 |u(y)|^2 \, dx \, dy \\
= \frac{1}{2} \int_{\R^3 \times \R^3} \langle \nabla W(x-y) \mid x-y \rangle |u(x)|^2 |u(y)|^2 \, dx \, dy \\
= -\frac{1}{2} \int_{\R^3 \times \R^3} W(x-y)  |u(x)|^2 |u(y)|^2 \, dx \, dy
\end{multline*}
via Euler's identity for homogeneous functions. Plugging this into~\eqref{eq:3} yields~\eqref{eq:poho}.
\end{proof}

\subsection{Orbital stability property}

In this section, we consider the Schr\"odinger equation
\begin{equation}
    \label{evolSE}
\begin{cases}
\iu\frac{\partial u}{\partial t}+\Delta u+(W * |u|)^2u=0  & \text{in $\R^3\times(0,\infty)$}, \\
u(x,0)=u_0(x) & \text{in $\R^3$},
\end{cases}
\end{equation}
under assumption, coming from ${\bf (W)}$,  that
\begin{equation}
    \label{carab}
\frac{C_1}{|x|} \leq W(x) \leq \frac{C_2}{|x|},
\end{equation}
for positive constants $C_1,C_2$ (cf.~\eqref{growth}).
This equation is also known as Pekar-Choquard equation (see e.g.\ \cite{CL-cmp,liebchoq,Lionchoq}).
Consider the functionals
\begin{equation*}
{\mathcal E}(u) = \frac{1}{2} \|\nabla u\|^2_{L^2}- \frac{1}{4} \D(u),\quad
J(u) = \frac{1}{2} \|\nabla u\|^2_{L^2}+ \frac{a}{2}\|u\|^2_{L^2}- \frac{1}{4} \D(u),
\end{equation*}
where
\begin{equation}
\label{defDD}
\D(u)= \int_{\R^6}W(x-y)|u(x)|^2 |u(y)|^2\, dx dy,
\end{equation}
and let us set
\begin{align*}
{\mathcal M}& =\big\{u\in H^1(\R^3,\C):\,\|u\|_{L^2}^2=\rho\big\}, \\
\noalign{\vskip2pt}
{\mathcal N}& =\big\{u\in H^1(\R^3,\C):\,\text{$u\neq 0$ and $J'(u)(u)=0$}\},
\end{align*}
for some positive number $\rho>0$.
\vskip3pt

\begin{definition}
    \label{defgstates}
  We denote by ${\mathcal G}$ the set of ground state solutions
  of~\eqref{eq:limiting}, that is solutions to the minimization problem
  \begin{equation}
    \label{MinPb1}
    \Lambda=\min_{u\in{\mathcal N}} J(u).
  \end{equation}
\end{definition}

\begin{remark}\rm
    \label{nonEmN}
    By Corollary~\ref{addprop} (see also the correspondence between
    critical points in the proof of Lemma~\ref{equiv-min}), the minimization problem in Definition~\ref{defgstates}
is equivalent to a constrained minimization problem on a sphere of $L^2$ with a
 suitable radius $\rho$. For the latter problem one can find a solution
by following the arguments of~\cite[Section 1]{CL-cmp}, as minimizing sequences
converge strongly in $H^1(\R^3)$. In particular, ${\mathcal G}\not=\emptyset$.
\end{remark}

In Lemma~\ref{equiv-min} we will prove that a ground state
solution of~\eqref{eq:limiting} can be obtained as scaling of a
 solution to the minimization problem
  \begin{equation}
    \label{MinPb2}
    \Lambda=\min_{u\in{\mathcal M}} {\mathcal E}(u),
  \end{equation}
which is a quite useful characterization for the stability issue.
We now recall two global existence results for problem~\eqref{evolSE}
(see e.g.\ \cite[Corollary 6.1.2, p.164]{cazenavB}). We remark that
\eqref{carab} holds.

\begin{proposition}
    \label{glob1}
  Let $u_0\in H^1(\R^3)$. Then
  problem~\eqref{evolSE} admits a unique global solution $u\in
  C^1([0,\infty),H^1(\R^3,\C))$. Moreover, the charge and the energy
  are conserved in time, namely
  \begin{equation}
    \label{chargeenergy}
    \|u(t)\|_{L^2}=\|u_0\|_{L^2},\qquad {\mathcal E}(u(t))={\mathcal E}(u_0),
  \end{equation}
  for all $t\in[0,\infty)$.
\end{proposition}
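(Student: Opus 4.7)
\smallskip
\noindent\textbf{Proof plan.} The plan is to cast the problem in abstract Cauchy form $\iu\partial_t u=-\Delta u-f(u)$ with $f(u)=(W*|u|^2)u$, and verify the hypotheses of the $H^1$-subcritical theory developed in \cite{cazenavB}. The first task is to show that $f$ is locally Lipschitz on $H^1(\R^3,\C)$. By \eqref{carab}, $W*|u|^2$ is dominated pointwise by the Riesz potential $|x|^{-1}*|u|^2$, so the Hardy-Littlewood-Sobolev inequality gives $\|W*|u|^2\|_{L^6}\le C\|u\|_{L^{12/5}}^2$; combined with H\"older and the Sobolev embedding $H^1(\R^3)\hookrightarrow L^p$ for $p\in[2,6]$, one obtains
\[
\|f(u)-f(v)\|_{L^2}\le C(\|u\|_{H^1}+\|v\|_{H^1})^2\|u-v\|_{H^1}.
\]
A standard contraction argument on the Duhamel formulation
\[
u(t)=e^{\iu t\Delta}u_0+\iu\int_0^t e^{\iu(t-s)\Delta}f(u(s))\,ds
\]
in a suitable Strichartz space then produces a unique maximal solution $u\in C([0,T_{\max});H^1)$, together with the blow-up alternative $\|u(t)\|_{H^1}\to\infty$ if $T_{\max}<\infty$.

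\smallskip
To derive \eqref{chargeenergy} I would approximate by smoother initial data $u_0^{(n)}\in H^3$, whose corresponding solutions are classical. Testing the equation against $\bar u$ and taking imaginary parts yields the conservation of charge, while testing against $\partial_t\bar u$ and taking real parts yields the conservation of energy, once one notices that $W$ being even makes the functional $\D$ in \eqref{defDD} Fr\'echet differentiable on $H^1$ with a Hartree-type differential. The continuous dependence of $u$ on $u_0$ built into the fixed-point construction then transfers both identities to arbitrary data in $H^1(\R^3,\C)$.

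\smallskip
The last step is to rule out finite-time blow-up using the two conservation laws. Combining HLS with the Gagliardo-Nirenberg interpolation $\|u\|_{L^{12/5}}\le C\|u\|_{L^2}^{3/4}\|\nabla u\|_{L^2}^{1/4}$ yields
\[
\D(u)\le C\|u\|_{L^{12/5}}^4\le C'\|u\|_{L^2}^3\,\|\nabla u\|_{L^2},
\]
and Young's inequality then gives ${\mathcal E}(u)\ge \tfrac14\|\nabla u\|_{L^2}^2-C''\|u\|_{L^2}^6$. Since both $\|u(t)\|_{L^2}$ and ${\mathcal E}(u(t))$ are preserved in time by the previous step, $\|\nabla u(t)\|_{L^2}$ remains bounded on $[0,T_{\max})$ in terms of $\|u_0\|_{H^1}$, and the blow-up alternative forces $T_{\max}=+\infty$. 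I expect the main technical hurdle to be the Lipschitz estimate on $f$, where the HLS and Sobolev exponents must be balanced carefully; once this is in place, the rest follows from the abstract Schr\"odinger machinery of \cite{cazenavB} that the authors explicitly invoke.
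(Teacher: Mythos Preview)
Your sketch is correct and follows exactly the standard route (local well-posedness via Strichartz/fixed point, conservation laws by approximation, and the a priori $H^1$ bound from HLS + Gagliardo--Nirenberg to rule out blow-up). The paper itself does not give a proof of this proposition: it merely cites \cite[Corollary 6.1.2, p.164]{cazenavB}, so your argument is precisely the one underlying that reference.
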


\begin{definition}
  The set ${\mathcal G}$ of ground state solutions
  of~\eqref{eq:limiting} is said to be orbitally stable for the
  Pekar-Choquard equation~\eqref{evolSE} if for every $\eps>0$ there
  exists $\delta>0$ such that
\begin{equation*}
  \text{$\forall u_0\in H^1(\R^3,\C)$:\quad $\inf_{\phi\in{\mathcal G}}\|u_0-\phi\|_{H^1}<\delta$\quad implies that \quad
    $\sup_{t\geq 0}\inf_{\psi\in {\mathcal G}}\|u(t,\cdot)-\psi\|_{H^1}<\eps$},
\end{equation*}
where $u(t,\cdot)$ is the solution of~\eqref{evolSE} corresponding to
the initial datum $u_0$.
\end{definition}

Roughly speaking, the ground states are orbitally stable if any
orbit starting from an initial datum $u_0$ close to ${\mathcal G}$
remains close to ${\mathcal G}$, uniformly in time.

In the classical orbital stability of Cazenave and Lions (see e.g.\ \cite{CL-cmp})
the ground states set ${\mathcal G}$ is meant as the set of minima of the functional
${\mathcal E}$ constrained to a sphere of $L^2(\R^3)$. In this section we just aim to show that
orbital stability holds with respect to ${\mathcal G}$ as defined in Definition~\ref{defgstates}.

\vskip4pt

Consider the following sets:
\begin{align*}
K_{\mathcal N}& =\{m\in\R:\text{there is $w\in {\mathcal N}$ with $J'(w)=0$ and $J(w)=m$}\}, \\
K_{\mathcal M}& =\{c\in\R^-:\text{there is $u\in {\mathcal M}$ with ${\mathcal E}'|_{{\mathcal M}}(u)=0$
and ${\mathcal E}(u)=c$}\}.
\end{align*}
In the next result we establish the equivalence between
minimization problems~\eqref{MinPb1} and~\eqref{MinPb2}, namely that a suitable scaling of a
solution of the first problem corresponds to a solution of the second
problem with a mapping between the critical values.

\begin{lemma}
  \label{equiv-min}
  The following minimization problems are equivalent
  \begin{equation}
    \label{defdei2valori}
    \Lambda=\min_{u\in{\mathcal M}} {\mathcal E}(u),\qquad
    \Gamma=\min_{u\in{\mathcal N}} J(u),
  \end{equation}
for $\Lambda<0$ and $\Lambda=\Psi(\Gamma)$, where $\Psi:K_{\mathcal N}\to K_{\mathcal M}$ is defined by
$$
\Psi(m)=
-\frac{1}{2}\Big(\frac{3}{a\rho}\Big)^{-3}
  m^{-2},\quad m\in K_{\mathcal N}.
$$
\end{lemma}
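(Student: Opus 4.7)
The plan is to construct an explicit scaling transformation that bijectively maps non-trivial critical points of $J$ on the Nehari manifold $\mathcal{N}$ onto constrained critical points of $\E$ on the sphere $\M$, and then to read off how critical values transform under this map. The $(-1)$-homogeneity of $W$ together with the Pohozaev identity of Lemma \ref{pohoz} will make all the computations explicit.

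First I would verify $\Lambda<0$. For any fixed $u_0\in\M$, the family $u_t(x)=t^{3/2}u_0(tx)$ keeps $\|u_t\|_{L^2}^2=\rho$, while the homogeneity of $W$ yields $\D(u_t)=t\,\D(u_0)$, so
\[
\E(u_t)=\frac{t^2}{2}\|\nabla u_0\|_{L^2}^2-\frac{t}{4}\D(u_0)<0
\]
for sufficiently small $t>0$, giving $\Lambda<0$. Next, for any critical point $v$ of $\E|_{\M}$, the Lagrange multiplier rule produces $-\Delta v-(W*|v|^2)v=\sigma v$ for some $\sigma\in\R$. Pairing with $\bar v$ gives $\|\nabla v\|_{L^2}^2-\D(v)=\sigma\rho$, while Lemma \ref{pohoz} applied with mass $-\sigma$ gives $\frac{1}{2}\|\nabla v\|_{L^2}^2-\frac{3}{2}\sigma\rho=\frac{5}{4}\D(v)$. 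Solving these two linear relations jointly yields
\[
\|\nabla v\|_{L^2}^2=-\frac{\sigma\rho}{3},\qquad \D(v)=-\frac{4\sigma\rho}{3},\qquad \E(v)=\frac{\sigma\rho}{6}.
\]
In particular, $\sigma<0$ whenever $v\not\equiv 0$, and every $c\in K_{\M}$ is strictly negative.

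The scaling bijection is then set up as follows. Given a non-trivial critical point $u$ of $J$ (hence $u$ solves $-\Delta u+au=(W*|u|^2)u$ and lies in $\mathcal{N}$), set $\mu=\rho/\|u\|_{L^2}^2$ and $v(x)=\mu^2 u(\mu x)$. A direct change of variables using $W(\lambda x)=\lambda^{-1}W(x)$ shows $v\in\M$ and that $v$ satisfies the Euler-Lagrange equation above with multiplier $\sigma=-a\mu^2<0$. Conversely, given a constrained critical point $v$ of $\E|_{\M}$, the preceding paragraph guarantees $\sigma<0$, so $\beta=\sqrt{-a/\sigma}$ is real, and $u(x)=\beta^2 v(\beta x)$ is a non-trivial critical point of $J$ in $\mathcal{N}$. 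The two assignments are mutually inverse.

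It remains to compute how values transform. For $u\in\mathcal{N}$ with $J'(u)=0$, combining the Nehari identity $\D(u)=\|\nabla u\|_{L^2}^2+a\|u\|_{L^2}^2$ with Lemma \ref{pohoz} gives $a\|u\|_{L^2}^2=3\|\nabla u\|_{L^2}^2$ and $\D(u)=4\|\nabla u\|_{L^2}^2$, so $J(u)=\|\nabla u\|_{L^2}^2=(a/3)\|u\|_{L^2}^2$. Using $\|\nabla v\|_{L^2}^2=\mu^3\|\nabla u\|_{L^2}^2$ and $\D(v)=\mu^3\D(u)$ together with $\|u\|_{L^2}^2=3J(u)/a$ then gives
\[
\E(v)=-\frac{\mu^3}{2}\|\nabla u\|_{L^2}^2=-\frac{1}{2}\Bigl(\frac{\rho}{\|u\|_{L^2}^2}\Bigr)^{3}J(u)=-\frac{1}{2}\Bigl(\frac{3}{a\rho}\Bigr)^{-3}J(u)^{-2}=\Psi(J(u)).
\]
Since $\Psi$ is strictly increasing on $(0,\infty)\supset K_{\mathcal{N}}$ and the bijection is between the non-empty sets of non-trivial critical points, it sends minimum to minimum, so $\Lambda=\Psi(\Gamma)$. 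The most delicate step is the automatic negativity of the Lagrange multiplier $\sigma$ of any constrained critical point on $\M$: this is what makes the inverse scaling well-defined and is the true source of the equivalence of the two minimization problems.
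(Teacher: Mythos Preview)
Your proof is correct and follows essentially the same route as the paper: both arguments hinge on the scaling $T^\lambda u(x)=\lambda^2 u(\lambda x)$, which bijects critical points of $J$ with constrained critical points of $\E|_{\M}$, together with the Pohozaev identity and the Nehari relation to compute $\|\nabla u\|_{L^2}^2=J(u)$, $\|u\|_{L^2}^2=\tfrac{3}{a}J(u)$, and hence the value map $\Psi$. Your organization is in fact slightly tighter than the paper's: you prove $\Lambda<0$ directly via the mass-preserving dilations $t^{3/2}u_0(t\,\cdot)$, and you deduce $\sigma<0$ for \emph{every} constrained critical point from Pohozaev alone (via $\|\nabla v\|_{L^2}^2=-\sigma\rho/3$), whereas the paper starts by \emph{assuming} $c=\E(u)<0$ to obtain $\gamma>0$; your way removes that apparent circularity and makes the inverse scaling $\beta=\sqrt{-a/\sigma}$ manifestly well-defined.
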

\begin{proof}
  Observe that if $u\in{\mathcal M}$ is a critical point of ${\mathcal
    E}|_{{\mathcal M}}$ with ${\mathcal E}(u)=c<0$, then there exists
  a Lagrange multiplier $\gamma\in\R$ such that ${\mathcal
    E}'(u)(u)=-\gamma\rho$, that is $\|\nabla
  u\|^2_{L^2}-\D(u)=-\gamma\rho$. By combining this identity with
  $\D(u)=2\|\nabla u\|^2_{L^2}-4c$, we obtain $-\|\nabla
  u\|^2_{L^2}+4c=-\gamma\rho$, which implies that $\gamma>0$.  The equation satisfied by $u$ is
  \begin{equation*}
    -\Delta u +\gamma u= \left( W * |u|^2 \right) u \qquad\text{in $\R^3$}.
  \end{equation*}
  After trivial computations one shows that the scaling
  \begin{equation}
    \label{rescalingfurb}
    w(x)=T^\lambda u(x):=\lambda^2u(\lambda x),\qquad \lambda:=\sqrt{\frac{a}{\gamma}}
  \end{equation}
  is a solution of equation~\eqref{eq:limiting}. On the contrary, if
  $w$ is a nontrivial critical point of $J$, then choosing
  \begin{equation}
    \label{suitabchoi}
    \lambda=\rho^{-1}\|w\|_{L^2}^{2},
  \end{equation}
  the function $u=T^{1/\lambda}w$ belongs to ${\mathcal M}$ and it is
  a critical point of ${\mathcal E}_{|\mathcal M}$. Now, Let $m$ be
  the value of the free functional $J$ on $w$, $m=J(w)$. Then
\begin{align}
  \label{enbalanc}
  m& =\frac{1}{2} \|\nabla w\|^2_{L^2}+ \frac{a}{2}\|w\|^2_{L^2}- \frac{1}{4} \D(w) \\
  &=\frac{1}{2}\lambda^{3}\|\nabla u\|^2_{L^2}+\frac{a}{2}\lambda\|u\|^2_{L^2}
-\frac{1}{4}\lambda^{3}\D(u) \notag \\
  & =\lambda^{3}{\mathcal E}(u)+\frac{a}{2}\lambda\rho   \notag\\
  & =c\lambda^{3}+\frac{a}{2}\lambda\rho.  \notag
\end{align}
Observe that, since of course $\D(w)=\|\nabla
w\|^2_{L^2}+a\|w\|^2_{L^2}$ and $w$ satisfies the Pohozaev
identity~\eqref{eq:poho}, we have the system
\begin{align*}
  & \frac{1}{2}\|\nabla w\|^2_{L^2} + \frac{3}{2}a \|w\|^2_{L^2}= \frac{5}{4} \big( \|\nabla w\|^2_{L^2}+a\|w\|^2_{L^2} \big),   \\
  & \frac{1}{4} \|\nabla w\|^2_{L^2}+ \frac{a}{4}\|w\|^2_{L^2}=m,
\end{align*}
namely
\begin{align*}
  &  3\|\nabla w\|^2_{L^2} - a \|w\|^2_{L^2}=0,  \\
   & \|\nabla w\|^2_{L^2}+ a\|w\|^2_{L^2}=4m,
\end{align*}
which, finally, entails
\begin{equation}
  \label{finalconcl}
  \|\nabla w\|^2_{L^2}=m,\qquad
  \|w\|^2_{L^2}=\frac{3}{a}m.
\end{equation}
As a consequence a simple rescaling yields the value of $\lambda$,
that is
\[
\rho\lambda=\|w\|^2_{L^2}=\frac{3}{a}m.
\]
Replacing this value of $\lambda$ back into
formula~\eqref{enbalanc}, one obtains
\[
m=c\Big(\frac{3m}{a\rho}\Big)^{3}+\frac{3}{2}m,
\]
namely
\[
-\frac{1}{2}m=c\Big(\frac{3m}{a\rho}\Big)^{3}.
\]
In conclusion, we get
\begin{equation}
  \label{formulaeneergies}
 c=\Psi(m)\overset{\mathrm{def}}{=}-\frac{1}{2}
\Big(\frac{3}{a\rho}\Big)^{-3}m^{-2},
\end{equation}
where the function $\Psi:\R^+\to\R^-$ is injective.
Of course
formally $m=\Psi^{-1}(c)$, where
\[
\Psi^{-1}(c)=\sqrt{\mathstrut -\frac{1}{2c}\Big( \frac{3}{a\rho} \Big)^{-3}},
\]
which is injective.
In order to
prove that $\Psi^{-1}$ is surjective, let $m$ be a free critical value for
$J$, namely $m=J(w)$, with $w$ solution of
equation~\eqref{eq:limiting}. Then, if we consider $u=T^{1/\lambda}
w(x)=\lambda^{-2}w(\lambda^{-1} x)$ with $\lambda$ given
by~\eqref{suitabchoi}, it follows that $u\in {\mathcal M}$ is a
critical point of ${\mathcal E}|_{\mathcal M}$ with lagrange
multiplier $\gamma=a\lambda^{-2}$. By using
\[
\lambda=\left(\frac{a\rho}{3m}\right)^{-1},
\]
in light of~\eqref{finalconcl} we have
\begin{align*}
  4c=\|\nabla u\|^2_{L^2}-\gamma\rho &=\lambda^{-3}\|\nabla w\|^2_{L^2}-a\rho\lambda^{-2} \\
&  = \Big(\frac{a\rho}{3m}\Big)^{3}m-a\rho\Big(\frac{3m}{a\rho}\Big)^{-2},
\end{align*}
which yields $m=\Psi^{-1}(c)$, after a few computations. We are now
ready to prove the assertion.  Notice that by
formula~\eqref{formulaeneergies} we have
\begin{align*}
  \Lambda & =\min_{u\in{\mathcal M}} {\mathcal E}(u)=\min_{u\in{\mathcal M}} c_u \\
  & =\min_{v\in{\mathcal N}} \Psi(m_v)=-\max_{v\in{\mathcal N}}
  \frac{1}{2}\Big(\frac{3}{a\rho}\Big)^{-3}
  m_v^{-2} \\
  &
  =-\frac{1}{2}\Big(\frac{3}{a\rho}\Big)^{-3}
  \Big( \min_{v\in{\mathcal N}} m_v \Big)^{-2} \\
  &=-\frac{1}{2}\Big(\frac{3}{a\rho}\Big)^{-3}
  \Gamma^{-2}=\Psi(\Gamma).
\end{align*}
If $\hat u\in{\mathcal M}$ is a minimizer for $\Lambda$, that is
$\Lambda={\mathcal E}(\hat u)=\min_{{\mathcal M}} {\mathcal E}$, the
function $\hat w=T^\lambda \hat u$ is a critical point of $J$ with
$J(\hat w)=\Psi^{-1}(\Lambda)=\Gamma$, so that $w$ is a minimizer for
$\Gamma$, that is $J(w)=\min_{{\mathcal N}}J$. This concludes the
proof.
\end{proof}

\begin{corollary}
    \label{addprop}
Any ground state solution $u$ to equation~\eqref{eq:limiting} satisfies
\begin{equation}
    \label{L^2const}
    \|u\|^2_{L^2}=\rho,\qquad\rho=\frac{3\Gamma}{a},
\end{equation}
where $\Gamma$ is defined in~\eqref{defdei2valori}. Moreover, for
this precise value of the radius $\rho$, we have
 \begin{equation}
    \label{ugvalori}
    \min_{u\in{\mathcal M}} J(u)=\min_{u\in{\mathcal N}} J(u),
  \end{equation}
where ${\mathcal M}={\mathcal M}_\rho$.
\end{corollary}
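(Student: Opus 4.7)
The plan is to extract both identities from the Nehari/Pohozaev algebra already performed inside the proof of Lemma \ref{equiv-min}, combined with a direct substitution into the formula $\Lambda=\Psi(\Gamma)$ provided by the same lemma.

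For \eqref{L^2const}, I take a ground state $u$, so $u\in\mathcal{N}$ with $J(u)=\Gamma$, and I combine two relations satisfied by $u$: the Nehari identity
\[
\|\nabla u\|^2_{L^2}+a\|u\|^2_{L^2}=\D(u),
\]
coming from $J'(u)(u)=0$, and the Pohozaev identity of Lemma \ref{pohoz},
\[
\tfrac12\|\nabla u\|^2_{L^2}+\tfrac{3a}{2}\|u\|^2_{L^2}=\tfrac54\D(u).
\]
Eliminating $\D(u)$ between them yields $3\|\nabla u\|^2_{L^2}=a\|u\|^2_{L^2}$; inserting this back into the Nehari-simplified form $J(u)=\tfrac14(\|\nabla u\|^2_{L^2}+a\|u\|^2_{L^2})$ gives $J(u)=\tfrac{a}{3}\|u\|^2_{L^2}$, whence $\|u\|^2_{L^2}=3\Gamma/a=\rho$. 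This is essentially the computation already used in Lemma \ref{equiv-min} to derive \eqref{finalconcl}, so no new input is required.

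For \eqref{ugvalori}, I observe that on $\mathcal{M}$ the constraint $\|u\|_{L^2}^2=\rho$ reduces $J$ to $\mathcal{E}+\tfrac{a\rho}{2}$, hence $\min_{\mathcal{M}}J=\Lambda+\tfrac{a\rho}{2}$. By Lemma \ref{equiv-min},
\[
\Lambda=\Psi(\Gamma)=-\tfrac12\Big(\tfrac{3}{a\rho}\Big)^{-3}\Gamma^{-2}.
\]
For the specific value $\rho=3\Gamma/a$ one has $a\rho/3=\Gamma$, so $(3/(a\rho))^{-3}=\Gamma^{3}$ and thus $\Lambda=-\Gamma/2$; adding $a\rho/2=3\Gamma/2$ gives $\min_{\mathcal{M}}J=-\Gamma/2+3\Gamma/2=\Gamma=\min_{\mathcal{N}}J$, as claimed.

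There is no genuine obstacle: the corollary is bookkeeping layered on top of Lemma \ref{pohoz} and Lemma \ref{equiv-min}. The only conceptual point worth emphasizing is that the radius $\rho$ cannot be chosen freely --- its value is forced by the Pohozaev identity applied to any ground state, and it is precisely this forced value that makes the two constrained minima of $J$ coincide.
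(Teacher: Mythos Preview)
Your proof is correct and follows essentially the same route as the paper. For \eqref{L^2const} the paper simply cites the computation \eqref{finalconcl} from Lemma~\ref{equiv-min} (which you reproduce explicitly), and for \eqref{ugvalori} the paper performs exactly the substitution $\min_{\mathcal M}J=\Lambda+\tfrac{a\rho}{2}=\Psi(\Gamma)+\tfrac{a\rho}{2}$ with $\rho=3\Gamma/a$ that you carry out; the arithmetic matches.
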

\begin{proof}
The first conclusion is an immediate consequence of the previous proof. Let us now
prove that the second conclusion holds, with $\rho$ as in~\eqref{L^2const}.
We have
\begin{align*}
    \min_{u\in{\mathcal M}} J(u) & =\min_{u\in{\mathcal M}} {\mathcal E}(u)+\frac{a}{2}\|u\|^2_{L^2}
    =\Lambda+\frac{a\rho}{2} \\
    & =-\frac{1}{2}\Big(\frac{3}{a\rho}\Big)^{-3}\Gamma^{-2}+\frac{a\rho}{2}
   \\
&=\Gamma=\min_{u\in{\mathcal N}} J(u),
\end{align*}
by the definition of $\rho$.
\end{proof}

The following is the main result of the section.

\begin{theorem}
    \label{orbitstab}
  Then the set ${\mathcal G}$ of the ground state solutions
  to~\eqref{eq:limiting} is orbitally stable for~\eqref{evolSE}.
\end{theorem}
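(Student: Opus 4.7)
\medskip

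\noindent\textbf{Proof plan.} The argument rests on the equivalence established in Lemma~\ref{equiv-min} and Corollary~\ref{addprop}: setting $\rho=3\Gamma/a$, the ground state set $\mathcal{G}$ coincides, up to scaling, with the set of minimizers of $\mathcal{E}$ on the $L^{2}$-sphere $\mathcal{M}=\mathcal{M}_{\rho}$. Thanks to this characterization, the plan is to reduce orbital stability to the classical Cazenave--Lions stability criterion for the constrained minimization problem~\eqref{MinPb2}, whose solutions are preserved by the flow thanks to Proposition~\ref{glob1}.

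First I would argue by contradiction. Assume there exist $\varepsilon_{0}>0$, a sequence of initial data $u_{0,n}\in H^{1}(\R^{3},\C)$ with $\inf_{\phi\in\mathcal{G}}\|u_{0,n}-\phi\|_{H^{1}}\to 0$, and times $t_{n}\geq 0$ such that, denoting by $u_{n}(t,\cdot)$ the global solution to~\eqref{evolSE} given by Proposition~\ref{glob1} with datum $u_{0,n}$,
\begin{equation*}
\inf_{\psi\in\mathcal{G}}\|u_{n}(t_{n},\cdot)-\psi\|_{H^{1}}\geq \varepsilon_{0}.
\end{equation*}
Pick $\phi_{n}\in\mathcal{G}$ with $\|u_{0,n}-\phi_{n}\|_{H^{1}}\to 0$. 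By Corollary~\ref{addprop} we have $\|\phi_{n}\|_{L^{2}}^{2}=\rho$ and $\mathcal{E}(\phi_{n})=\Lambda$, while the conservation laws~\eqref{chargeenergy} and continuity of $\mathcal{E}$ on $H^{1}$ yield
\begin{equation*}
\|u_{n}(t_{n})\|_{L^{2}}^{2}=\|u_{0,n}\|_{L^{2}}^{2}\longrightarrow\rho,\qquad \mathcal{E}(u_{n}(t_{n}))=\mathcal{E}(u_{0,n})\longrightarrow\Lambda.
\end{equation*}
After a harmless scaling $v_{n}:=(\rho/\|u_{n}(t_{n})\|_{L^{2}}^{2})^{1/2}\,u_{n}(t_{n},\cdot)$ we obtain a sequence $v_{n}\in\mathcal{M}$ with $\mathcal{E}(v_{n})\to\Lambda$, still satisfying $\inf_{\psi\in\mathcal{G}}\|v_{n}-\psi\|_{H^{1}}\geq\varepsilon_{0}/2$ for $n$ large.

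The core of the argument is then to show that every such minimizing sequence is relatively compact in $H^{1}(\R^{3},\C)$ up to translations, in the sense that there exist $y_{n}\in\R^{3}$ and $\hat u\in\mathcal{G}$ with $v_{n}(\cdot+y_{n})\to \hat u$ strongly in $H^{1}$. This I would prove by the concentration-compactness principle applied to the density $|v_{n}|^{2}$. Since $\mathcal{E}(v_{n})\to\Lambda<0$ and since, by the bilateral estimate~\eqref{carab}, the Hardy--Littlewood--Sobolev inequality gives $\mathcal{D}(u)\leq C\|u\|_{L^{12/5}}^{4}\leq C'\|u\|_{L^{2}}^{3}\|\nabla u\|_{L^{2}}$, the sequence $\{v_{n}\}$ is bounded in $H^{1}$. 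Vanishing is excluded because $v_{n}\to 0$ in $L^{p}(\R^{3})$ for $2<p<6$ would force $\mathcal{D}(v_{n})\to 0$ and hence $\liminf\mathcal{E}(v_{n})\geq 0>\Lambda$. Dichotomy is excluded by the strict subadditivity
\begin{equation*}
\Lambda(\rho)<\Lambda(\alpha)+\Lambda(\rho-\alpha),\qquad 0<\alpha<\rho,
\end{equation*}
which follows, in the standard way, from the homogeneity $\Lambda(\theta\rho)=\theta^{3}\Lambda(\rho)$ (a consequence of scaling $u\mapsto \theta^{1/2}u$ together with the rescaling behaviour of $\mathcal{D}$ under $u(\cdot)\mapsto u(\cdot/\mu)$, exploited exactly as in~\cite{CL-cmp,Lionchoq}). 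Hence only compactness survives: translating $v_{n}$ appropriately we obtain a strong $H^{1}$ limit $\hat u\in\mathcal{M}$ with $\mathcal{E}(\hat u)=\Lambda$, so $\hat u\in\mathcal{G}$. This contradicts $\inf_{\psi\in\mathcal{G}}\|v_{n}-\psi\|_{H^{1}}\geq\varepsilon_{0}/2$ and proves the theorem.

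I expect the main technical obstacle to be the concentration-compactness step, and specifically the verification of strict subadditivity of $\rho\mapsto\Lambda(\rho)$; the homogeneity of $W$ of degree $-1$ (together with the two-sided bound~\eqref{carab}) is precisely what makes the rescaling argument of Lions go through, so the proof reduces in essence to a careful adaptation of~\cite[Sections~I--II]{Lionchoq} from the Coulomb kernel to a general kernel satisfying $(\mathbf{W})$.
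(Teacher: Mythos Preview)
Your proposal is correct and follows essentially the same route as the paper's proof: argue by contradiction, use the conservation laws from Proposition~\ref{glob1} together with Corollary~\ref{addprop} to produce a minimizing sequence for $\mathcal{E}$ on the sphere $\mathcal{M}_{\rho}$, and then invoke relative compactness of minimizing sequences to reach a contradiction. The only difference is cosmetic: the paper simply cites~\cite[Section~1]{CL-cmp} for the compactness step, whereas you unpack the concentration--compactness trichotomy and verify the strict subadditivity $\Lambda(\theta\rho)=\theta^{3}\Lambda(\rho)$ explicitly (which is indeed the correct scaling law, coming from $u\mapsto\lambda^{2}u(\lambda\,\cdot)$ as in~\eqref{rescalingfurb}).
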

\begin{proof}
  Assume by contradiction that the assertion is false. Then we can
  find $\eps_0>0$, a sequence of times $(t_n)\subset(0,\infty)$ and of
  initial data $(u_0^n)\subset H^1(\R^3,\C)$ such that
  \begin{equation}
    \label{contradarg}
    \text{$\lim_{n\to\infty}\inf_{\phi\in{\mathcal G}}\|u_0^n-\phi\|_{H^1}=0$\quad  and \quad
      $\inf_{\psi\in {\mathcal G}}\|u_n(t_n,\cdot)-\psi\|_{H^1}\geq\eps_0$},
  \end{equation}
  where $u_n(t,\cdot)$ is the solution of~\eqref{evolSE} corresponding
  to the initial datum $u_0^n$. Taking into account~\eqref{L^2const}
and~\eqref{ugvalori} of Corollary~\ref{addprop}, for any $\phi\in{\mathcal G}$,
we have
\[
\|\phi\|^2_{L^2}=\rho_0,\qquad
J(\phi)=\min_{u\in{\mathcal M}_{\rho_0}} J(u),
\qquad
\quad\rho_0=\frac{3\Gamma}{a}.
\]
Therefore, considering the sequence $\Upsilon_n(x):=u_n(t_n,x)$, which is
bounded in $H^1(\R^3,\C)$, and recalling
the conservation of charge, as $n\to\infty$, from~\eqref{contradarg} it follows that
\begin{equation*}
  \|\Upsilon_n\|_{L^2}^2 =\|u_n(t_n,\cdot)\|_{L^2}^2=\|u^n_0\|_{L^2}^2=\rho_0+o(1).
  \end{equation*}
Hence, there exists a sequence $(\omega_n)\subset\R^+$ with $\omega_n\to 1$ as $n\to\infty$
such that
\begin{equation}
  \label{1contr}
  \|\omega_n\Upsilon_n\|_{L^2}^2 =\rho_0,\quad
\text{for all $n\geq 1$}.
 \end{equation}
Moreover, by the conservation of energy~\eqref{chargeenergy} and
the continuity of ${\mathcal E}$, as $n\to\infty$,
\begin{align}
  \label{2contr}
J(\omega_n\Upsilon_n) & ={\mathcal E}(\omega_n\Upsilon_n)+\frac{a}{2}\|\omega_n\Upsilon_n\|_{L^2}^2
={\mathcal E}(\Upsilon_n)+\frac{a}{2}\|\Upsilon_n\|_{L^2}^2+o(1) \\
&={\mathcal E}(u_n(t_n,\cdot))+\frac{a}{2}\|u_0^n\|_{L^2}^2+o(1)
={\mathcal E}(u^n_0)+\frac{a}{2}\|u_0^n\|_{L^2}^2+o(1) \notag\\
&=J(u_0^n)+o(1)=\min_{u\in{\mathcal M}_{\rho_0}} J(u)+o(1). \notag
\end{align}
Combining~\eqref{1contr}-\eqref{2contr}, it follows that $(\omega_n\Upsilon_n)\subset H^1(\R^3,\C)$ is a
minimizing sequence for the functional $J$ (and also for ${\mathcal E}$) over ${\mathcal M}_{\rho_0}$. By
taking into account the homogeneity property of $W$,
following the arguments of~\cite[Section 1]{CL-cmp}, we deduce that, up to
a subsequence, $(\omega_n\Upsilon_n)$ converges to some function $\Upsilon_0$,
which thus belongs to the set ${\mathcal G}$, since
by~\eqref{1contr}-\eqref{2contr} and equality \eqref{ugvalori}
$$
J(\Upsilon_0)=\min_{u\in{\mathcal M}_{\rho_0}} J(u)=\min_{u\in{\mathcal N}} J(u)
$$
Evidently, this is a contradiction with~\eqref{contradarg}, as we would have
\[
\eps_0\leq \lim_{n\to\infty}\inf_{\psi\in {\mathcal
    G}}\|\Upsilon_n-\psi\|_{H^1}\leq \lim_{n\to\infty}
\|\Upsilon_n-\Upsilon_0\|_{H^1}=0.
\]
This concludes the proof.
\end{proof}

In the particular case $W(x)=|x|^{-1}$, due to the uniqueness of ground states up to translations
and phase changes (cf.\ \cite{moroz}), Theorem~\ref{orbitstab} strengthens as follows.

\begin{corollary}
    Assume that $w$ is the unique real ground state of
    $$
    -\Delta w+aw=(|x|^{-1}*w^2)w,\quad\text{in $\R^3$}.
    $$
    Then for all $\eps>0$ there exists $\delta>0$ such that
\begin{equation*}
u_0\in H^1(\R^3,\C)\quad\text{and}\quad \inf_{\overset{y\in\R^3}{\theta\in[0,2\pi)}}
\|u_0-e^{\iu\theta}w(\cdot-y)\|_{H^1}<\delta
\end{equation*}
implies that
\begin{equation*}
   \sup_{t\geq 0}\inf_{\overset{y\in\R^3}{\theta\in[0,2\pi)}}\|u(t,\cdot)-e^{\iu\theta}w(\cdot-y)\|_{H^1}<\eps.
\end{equation*}
\end{corollary}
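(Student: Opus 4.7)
The plan is to reduce the corollary directly to Theorem~\ref{orbitstab} by using the explicit uniqueness of the ground state to give a concrete parametrization of the set $\mathcal{G}$ in the Coulomb case $W(x) = |x|^{-1}$.

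First I would invoke the uniqueness result from \cite{moroz}, already recalled in the preceding remark: any ground state solution of $-\Delta u + au = (|x|^{-1} * |u|^2)u$ is obtained from the fixed real positive ground state $w$ by a spatial translation and a constant phase rotation. This yields the explicit identification
\[
\mathcal{G} = \{e^{\iu\theta} w(\cdot - y) : y \in \R^3,\ \theta \in [0, 2\pi)\}.
\]

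With this parametrization in hand, for every $u \in H^1(\R^3, \C)$ the two quantities
\[
\inf_{\phi \in \mathcal{G}} \|u - \phi\|_{H^1} \qquad \text{and} \qquad \inf_{\substack{y \in \R^3 \\ \theta \in [0,2\pi)}} \|u - e^{\iu\theta} w(\cdot - y)\|_{H^1}
\]
coincide. Applying Theorem~\ref{orbitstab}, given $\eps > 0$ one obtains $\delta > 0$ such that $\inf_{\phi \in \mathcal{G}} \|u_0 - \phi\|_{H^1} < \delta$ implies $\sup_{t \geq 0} \inf_{\psi \in \mathcal{G}} \|u(t, \cdot) - \psi\|_{H^1} < \eps$; rewriting both infima via the parametrization above gives precisely the statement of the corollary.

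The only non-routine ingredient is the uniqueness of the ground state up to translations and phase changes, which is a delicate property specific to the Coulomb kernel and is exactly what \cite{moroz} provides; once this is accepted, the proof is essentially a relabeling of Theorem~\ref{orbitstab} and requires no further analytic work.
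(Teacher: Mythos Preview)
Your proposal is correct and matches the paper's approach exactly: the paper does not write out a separate proof but simply prefaces the corollary with the remark that, in the Coulomb case, uniqueness of ground states up to translations and phase changes (from \cite{moroz}) allows Theorem~\ref{orbitstab} to be restated in this explicit form. Your identification $\mathcal{G}=\{e^{\iu\theta}w(\cdot-y)\}$ and the subsequent relabeling of the two infima is precisely the intended argument.
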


\subsection{Structure of least energy solutions}

We can now state the following

\begin{lemma}
  \label{RemRaprGS}
  Any complex ground state solution $u$ to~\eqref{eq:limiting} has the form
  \begin{equation*}
    u(x)=e^{\iu\theta}|u(x)|,\,\quad\text{for some $\theta\in[0,2\pi)$}.
  \end{equation*}
\end{lemma}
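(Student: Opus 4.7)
My strategy is to combine the variational characterisation from Corollary~\ref{addprop} with the diamagnetic inequality. By that corollary, any complex ground state $u$ of~\eqref{eq:limiting} lies in $\mathcal{M}_{\rho_0}$ with $\rho_0=3\Gamma/a$ and satisfies $J(u)=\min_{v\in\mathcal{M}_{\rho_0}} J(v)=\Gamma$. Since $\D(|u|)=\D(u)$ by definition of the nonlocal term and $\||u|\|_{L^2}=\|u\|_{L^2}$, the real-valued function $|u|$ also belongs to $\mathcal{M}_{\rho_0}$.

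I would then invoke the diamagnetic inequality: $|u|\in H^1(\R^3,\R)$ with $|\nabla |u|(x)|\le|\nabla u(x)|$ for almost every $x\in\R^3$. Hence
$$
J(|u|)=\frac{1}{2}\|\nabla|u|\|^2_{L^2}+\frac{a}{2}\||u|\|^2_{L^2}-\frac{1}{4}\D(|u|)\le J(u)=\Gamma,
$$
and since $|u|\in\mathcal{M}_{\rho_0}$ equality must hold throughout. Consequently $|u|$ is itself a minimizer of $J$ on $\mathcal{M}_{\rho_0}$, and $|\nabla|u||=|\nabla u|$ almost everywhere in $\R^3$.

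Writing $u=\alpha+\iu\beta$ with real $\alpha,\beta$, on the set $\{|u|>0\}$ one has $|u|\nabla|u|=\alpha\nabla\alpha+\beta\nabla\beta$, and the equality $|\nabla|u||=|\nabla u|$ becomes, componentwise, the equality case of
$$
(\alpha\,\partial_i\alpha+\beta\,\partial_i\beta)^2\le(\alpha^2+\beta^2)\bigl(|\partial_i\alpha|^2+|\partial_i\beta|^2\bigr),\qquad i=1,2,3.
$$
This forces the vectors $(\alpha,\beta)$ and $(\partial_i\alpha,\partial_i\beta)$ to be proportional for each $i$, i.e.\ $\alpha\,\partial_i\beta-\beta\,\partial_i\alpha=0$ a.e.\ on $\{|u|>0\}$. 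Equivalently, the phase of $u$ has vanishing weak gradient there, and is therefore locally constant on each connected component of $\{|u|>0\}$.

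The final and subtlest step is to promote local to global constancy of the phase. Because $|u|$ minimizes $\mathcal{E}$ on $\mathcal{M}_{\rho_0}$, it satisfies an Euler--Lagrange equation $-\Delta|u|+\gamma|u|=(W*|u|^{2})|u|$ for some Lagrange multiplier $\gamma$; by the scaling computation in the proof of Lemma~\ref{equiv-min}, necessarily $\gamma=a$. The bound $W*|u|^{2}\in L^\infty(\R^3)$ following from~\eqref{carab} together with $u\in H^1(\R^3)\hookrightarrow L^4(\R^3)$, combined with standard elliptic regularity, makes $|u|$ continuous and the coefficient $a-W*|u|^{2}$ bounded. The strong maximum principle, applied to the nonnegative, nontrivial solution $|u|$, then yields $|u|>0$ on all of $\R^3$. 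Hence $\{|u|>0\}=\R^3$ is connected, the locally constant phase of $u$ is a single constant $\theta\in[0,2\pi)$, and $u(x)=e^{\iu\theta}|u(x)|$, as claimed. The main obstacle is precisely this last strict positivity of $|u|$: without it one could only assert that the phase is constant on each connected component of the support, not globally.
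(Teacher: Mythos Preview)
Your argument is correct and follows essentially the same route as the paper: reduce to the constrained minimization via Corollary~\ref{addprop}, use the diamagnetic inequality $|\nabla|u||\le|\nabla u|$ to force equality, and deduce that the phase has vanishing gradient. The paper works with $\mathcal{E}$ on $\mathcal{M}_\rho$ rather than $J$, but since $J=\mathcal{E}+\tfrac{a}{2}\rho$ on $\mathcal{M}_\rho$ this is the same minimization. Your treatment of the final step is in fact more scrupulous than the paper's: the paper simply asserts that $\Re(\iu\bar u\nabla u)=0$ a.e.\ yields a single global $\theta$, whereas you explicitly close the gap by showing (via the Euler--Lagrange equation for $|u|$, elliptic regularity, and the strong maximum principle) that $|u|>0$ on all of $\R^3$, so that the a~priori only locally constant phase is globally constant.
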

\begin{proof}
  In view of Lemma~\ref{equiv-min} (see also Corollary~\ref{addprop}), searching for ground state
  solutions of~\eqref{eq:limiting} is equivalent to consider the
  constrained minimization problem $\min_{u\in{\mathcal M}_\rho} {\mathcal
    E}(u)$ for a suitable value of $\rho>0$. Then the proof is quite standard; we include
a proof here for the sake of selfcontainedness. Consider
\begin{align*}
  \sigma_\C&=\inf\big\{\E(u): u\in H^1(\R^3,\C),\,\|u\|_{L^2}^2=\rho \big\},\\
  \sigma_\R&=\inf\big\{\E(u): u\in H^1(\R^3,\R), \|u\|_{L^2}^2=\rho \big\}.
\end{align*}
It holds $\sigma_\C=\sigma_\R$. Indeed, trivially one has
$\sigma_\C\leq \sigma_\R$. Moreover, if $u\in H^1(\R^3,\C)$, due to
the well-known inequality $|\nabla |u(x)||\leq |\nabla u(x)|$ for
a.e.\ $x\in\R^3$, it holds
\[
\int|\nabla |u(x)||^2dx\leq \int|\nabla u(x)|^2dx,
\]
so that $\E(|u|)\leq \E(u)$. In particular $\sigma_\R\leq\sigma_\C$,
yielding $\sigma_\C=\sigma_\R$.  Let now $u$ be a solution to
$\sigma_\C$ and assume by contradiction that $\mu(\{x\in\R^3:|\nabla |u|(x)|<|\nabla u(x)|\})>0,$ where
$\mu$ denotes the Lebesgue measure in $\R^3$.  Then
$\||u|\|_{L^2}=\|u\|_{L^2}=1$, and
\begin{equation*}
  \sigma_\R \leq \frac{1}{2}\int |\nabla |u||^2dx-\frac{1}{4}\D(|u|)
  <\frac{1}{2}\int |\nabla u|^2dx-\frac{1}{4}\D(u)=\sigma_\C,
\end{equation*}
which is a contradiction, being $\sigma_\C=\sigma_\R$. Hence, we have
$|\nabla |u(x)||=|\nabla u(x)|$ for a.e.\ $x\in\R^3$. This is true if
and only if $\re\, u\nabla (\im\, u)=\im\, u\nabla(\re\, u)$.  In
turn, if this last condition holds, we get
\[
{\bar u}\nabla u=\re\, u\nabla (\re\, u)+ \im\, u\nabla (\im\,
u),\quad \text{a.e.\ in $\R^3$},
\]
which implies that $\re\,(i\bar u(x)\nabla u(x))=0$ a.e.\ in
$\R^3$. From the last identity one finds $\theta\in [0,2\pi)$ such
that $u=e^{\iu\theta}|u|$, concluding the proof.
\end{proof}

We then get the following result about least-energy levels for the limiting problem~\eqref{eq:limiting}.
\begin{corollary}\label{levels}
Consider the two problems
\begin{eqnarray}
-\Delta u +au=  (W * |u|^2)  u,\quad &&u \in H^1(\mathbb{R}^3,\mathbb{R}),  \label{eq:2.17}\\
-\Delta u +au=  (W * |u|^2)  u,\quad &&u \in H^1(\mathbb{R}^3,\mathbb{C}), \label{eq:2.18}.
\end{eqnarray}
Let $E_a$ and $E_a^c$ denote their least-energy levels. Then
\begin{equation}
    \label{eq:uguali}
 E_a= E_a^c.
 \end{equation}
Moreover any least energy solution of \eqref{eq:2.17} has the form
$e^{i \tau} U$ where $U$ is a positive least energy solution of
\eqref{eq:2.18} and $\tau \in \R$.
\end{corollary}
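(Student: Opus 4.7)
The plan is a standard two-part argument that reduces the Corollary to Lemma~\ref{RemRaprGS}. First I would establish the equality $E_a=E_a^c$ by the usual double inequality, and then extract the structural description of complex ground states as an almost immediate consequence.

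The inequality $E_a^c\le E_a$ is immediate, since $H^1(\R^3,\R)\subset H^1(\R^3,\C)$ and the functional $J$ together with the Nehari constraint coincide on real-valued functions: any admissible competitor for the real problem is also admissible for the complex one with the same value of $J$. For the reverse inequality $E_a\le E_a^c$, I would invoke Lemma~\ref{RemRaprGS}: a complex ground state $u$ can be written as $u=e^{\iu\theta}|u|$ for some constant $\theta\in[0,2\pi)$. Substituting this factorisation into~\eqref{eq:2.18} and cancelling the global phase shows that $|u|$ solves $-\Delta|u|+a|u|=(W*|u|^2)|u|$, with $\||u|\|_{L^2}=\|u\|_{L^2}$ and $J(|u|)=J(u)=E_a^c$. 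Hence $|u|\in H^1(\R^3,\R)$ is a real-valued competitor realising $E_a^c$, which yields $E_a\le E_a^c$, and therefore $E_a=E_a^c$.

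For the structural claim, let $u$ be a complex least-energy solution. The previous step shows that $U:=|u|$ is a real least-energy solution, and $u=e^{\iu\theta}U$. It only remains to upgrade $U\ge 0$ to a strictly positive function. Standard elliptic regularity, using that $W*|U|^2$ is continuous and bounded by $(\mathbf{W})$ together with $U\in H^1(\R^3)$, gives $U\in C^2(\R^3)$; rewriting the equation as $-\Delta U+\bigl(a-W*|U|^2\bigr)U=0$ with a bounded coefficient, the strong maximum principle applied to the non-negative, non-trivial $U$ forces $U>0$ on $\R^3$. Setting $\tau=\theta$ then furnishes the desired representation.

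The only substantive step is the phase factorisation that produces $E_a\le E_a^c$, and Lemma~\ref{RemRaprGS} does all the work there; the positivity upgrade is routine once regularity is invoked, and the opposite inequality is tautological.
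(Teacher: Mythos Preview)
Your argument is correct and is precisely what the paper intends: the Corollary is stated without proof as an immediate consequence of Lemma~\ref{RemRaprGS}, and your double-inequality plus the maximum-principle upgrade to strict positivity fills in exactly those details. Note only that the references to \eqref{eq:2.17} and \eqref{eq:2.18} in the second sentence of the statement appear to be transposed; you have correctly proved the intended assertion, namely that every complex ground state is a constant phase times a positive real ground state.
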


\subsection{Compactness of the ground states set}

In light of assumption~\textbf{(W)}, there exist two positive
constants $C_1,C_2$ such that
\begin{equation}
  \label{growth}
  \frac{C_1}{|x|}\leq W(x)\leq \frac{C_2}{|x|},\quad\text{for all $x\in \R^3\setminus\{0\}$}.
\end{equation}

\vskip2pt
\noindent
We recall two Hardy-Littlewood-Sobolev type inequality (see e.g.\
\cite[Theorem 1, pag. 119]{stein} and \cite[Theorem 4.3, p.98]{ll}) in $\R^3$:
\begin{align}
  \label{Hardyin}
&  \forall u\in L^{\frac{6q}{3+2q}}(\R^3):\quad \left\| |x|^{-1} * u^2  \right\|_{L^q} \leq C \|u\|^2_{L^{\frac{6q}{3+2q}}}, \\
  \label{HLS2}
 & \forall u\in L^{\frac{12}{5}}(\R^3):\quad
  \int_{\R^{6}}|x-y|^{-1}u^2(y)u^2(x)dydx\leq C\|u\|_{L^{\frac{12}{5}}}^4.
\end{align}

\vspace{2pt}

We have the following
\begin{lemma}
  \label{finitEn}
  There exists a positive constant $C$ such that
  \begin{equation*}
    \forall u\in H^1(\R^3):\quad
    \D(u)\leq C\|u\|^{3}_{L^2}\,\|u\|_{H^1}.
  \end{equation*}
\end{lemma}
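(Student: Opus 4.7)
The plan is to combine the pointwise upper bound on $W$ with the Hardy-Littlewood-Sobolev inequality, then interpolate $L^{12/5}$ between $L^2$ and $L^6$, and finally use the Sobolev embedding in dimension three.

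First I would invoke the growth bound $W(x) \leq C_2/|x|$ from \eqref{growth}, which gives immediately
\[
\D(u) \leq C_2 \int_{\R^6} \frac{|u(x)|^2 |u(y)|^2}{|x-y|} \, dx \, dy.
\]
Then I would apply the Hardy--Littlewood--Sobolev inequality \eqref{HLS2} to obtain
\[
\D(u) \leq C \|u\|_{L^{12/5}}^4.
\]

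Next I would interpolate the $L^{12/5}$ norm between $L^2$ and $L^6$. Solving $\tfrac{5}{12} = \tfrac{\theta}{2} + \tfrac{1-\theta}{6}$ gives $\theta = 3/4$, hence
\[
\|u\|_{L^{12/5}} \leq \|u\|_{L^2}^{3/4} \|u\|_{L^6}^{1/4},
\]
and therefore $\|u\|_{L^{12/5}}^4 \leq \|u\|_{L^2}^3 \|u\|_{L^6}$. The Sobolev embedding $H^1(\R^3) \hookrightarrow L^6(\R^3)$ then yields $\|u\|_{L^6} \leq C \|u\|_{H^1}$, and combining the chain of inequalities gives the desired estimate
\[
\D(u) \leq C \|u\|_{L^2}^3 \|u\|_{H^1}.
\]

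There is essentially no obstacle here: the only substantive content is the choice of the interpolation exponent, and everything else is a direct quotation of \eqref{growth}, \eqref{HLS2}, and the standard Sobolev embedding. The only point to double-check is that the interpolation exponent $\theta = 3/4$ produces exactly the powers claimed in the statement, which indeed it does.
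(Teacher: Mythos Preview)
Your proof is correct and essentially identical to the paper's: both use the bound \eqref{growth}, then the Hardy--Littlewood--Sobolev inequality \eqref{HLS2}, and finally control $\|u\|_{L^{12/5}}^4$ by $\|u\|_{L^2}^3\|u\|_{H^1}$. The paper phrases the last step as ``the Gagliardo--Nirenberg inequality'' while you spell it out as $L^2$--$L^6$ interpolation followed by Sobolev embedding, which is the same thing.
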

\begin{proof}
  By combining~\eqref{growth},~\eqref{HLS2} and the
  Gagliardo-Nirenberg inequality, we obtain
  \begin{eqnarray*}
    \D(u)\leq C_2 \int_{\R^{6}}|x-y|^{-1}u^2(x)u^2(y)\, dxdy
    \le C\|u\|^4_{L^{\frac{12}{5}}}
    \le C\|u\|^{3}_{L^2}\,\|u\|_{H^1},
  \end{eqnarray*}
  which proves the assertion.
\end{proof}

More generally, we recall the following facts from \cite[Section 2]{macri}.

\begin{lemma} \label{lemma:2.12}
Assume that $K \in L^s(\R^3) + L^\infty(\R^3)$ for some $s \geq 3/2$.
Then there exists a constant $C>0$ such that, for any $u,v \in H^1(\R^3)$,
\begin{equation}\label{stima:infty}
\|K * (uv) \|_{L^\infty} \leq C \|u\|_{H^1} \|v\|_{H^1}.
\end{equation}
Moreover, assume that $K \in L^{3-\eps}(\R^3) + L^{3+\eps}(\R^3)$ for some $\eps>0$ small.
Then
\begin{equation}
    \label{tozeroconv}
\lim_{|x|\to\infty} (K*u^2)(x)=0,
\end{equation}
for any $u\in H^1(\R^3)$.
\end{lemma}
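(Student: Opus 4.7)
The plan is to prove the two claims separately, both by decomposing $K$ according to the given $L^p$ splitting and combining Young's convolution inequality with Sobolev-type information on $u,v$.

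For the boundedness estimate \eqref{stima:infty}, I would write $K=K_1+K_2$ with $K_1\in L^s(\R^3)$ and $K_2\in L^\infty(\R^3)$. The $K_2$-piece is immediate:
$$
\|K_2 *(uv)\|_{L^\infty}\leq \|K_2\|_{L^\infty}\|uv\|_{L^1}\leq \|K_2\|_{L^\infty}\|u\|_{L^2}\|v\|_{L^2}.
$$
For the $K_1$-piece, Young with exponents $s$ and $s'=s/(s-1)$ gives $\|K_1*(uv)\|_{L^\infty}\leq \|K_1\|_{L^s}\|uv\|_{L^{s'}}$, and then H\"older followed by the Sobolev embedding $H^1(\R^3)\hookrightarrow L^{2s'}(\R^3)$ bounds $\|uv\|_{L^{s'}}$ by $C\|u\|_{H^1}\|v\|_{H^1}$. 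The embedding is valid precisely when $2s'\in[2,6]$, equivalently $s\geq 3/2$, which is the hypothesis. Adding the two bounds yields \eqref{stima:infty}.

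For the decay claim \eqref{tozeroconv}, I would again split $K=K_1+K_2$, this time with $K_1\in L^{3-\eps}(\R^3)$ and $K_2\in L^{3+\eps}(\R^3)$, and show that each $K_i*u^2$ lies in $C_0(\R^3)$. Since $u\in H^1(\R^3)\hookrightarrow L^2(\R^3)\cap L^6(\R^3)$, interpolation places $u^2$ in every $L^p(\R^3)$ with $p\in[1,3]$; in particular $u^2$ belongs to $L^{(3-\eps)/(2-\eps)}(\R^3)\cap L^{(3+\eps)/(2+\eps)}(\R^3)$, since both conjugate exponents cluster around $3/2$ for $\eps$ small.

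The core is then the general fact that $f\in L^p(\R^3)$ and $g\in L^{p'}(\R^3)$ with $1<p<\infty$ imply $f*g\in C_0(\R^3)$. I would prove this by approximation: pick $f_n,g_n\in C_c(\R^3)$ with $f_n\to f$ in $L^p$ and $g_n\to g$ in $L^{p'}$; each convolution $f_n*g_n$ is continuous with compact support, hence lies in $C_0(\R^3)$, and by Young
$$
\|f*g-f_n*g_n\|_{L^\infty}\leq \|f-f_n\|_{L^p}\|g\|_{L^{p'}}+\|f_n\|_{L^p}\|g-g_n\|_{L^{p'}}\longrightarrow 0,
$$
so $f*g$ is a uniform limit of elements of $C_0(\R^3)$, and thus belongs to $C_0(\R^3)$ (closed in $L^\infty$). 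Applying this to the pairs $(K_1,u^2)$ and $(K_2,u^2)$ and summing gives \eqref{tozeroconv}. The main (essentially only) obstacle is exponent bookkeeping: one must verify that $(3\mp\eps)/(2\mp\eps)\in[1,3]$ for $\eps$ small, which is exactly what guarantees that $u^2$ pairs with each $K_i$ through the Young/H\"older chain; everything else is standard.
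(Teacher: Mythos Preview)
Your proof is correct and follows essentially the same route as the paper: the same $K=K_1+K_2$ splitting, the same Young/H\"older/Sobolev chain for \eqref{stima:infty} (with the same observation that $2s'\in[2,6]$ iff $s\geq 3/2$), and the same reduction of \eqref{tozeroconv} to the fact that $f\in L^p$, $g\in L^{p'}$ implies $f*g\in C_0(\R^3)$. The only difference is cosmetic: the paper cites this last fact from Lieb--Loss, whereas you supply the short density argument yourself.
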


\begin{proof}
Write $K=K_1+K_2$ with $K_1 \in L^\infty(\R^3)$ and $K_2 \in L^s (\R^3)$, $s \geq 3/2$.
Then, by Sobolev's embedding theorems, $H^1(\R^3) \subset
L^{2s/(s-1)}(\R^3)$ and hence, for some $C>0$,
\begin{align*}
\|K*(uv)\|_{L^\infty} &= \|(K_1+K_2)*(uv)\|_{L^\infty} \\
&\leq \|K_1\|_{L^\infty} \|u\|_{L^2} \|v\|_{L^2}+
\|K_2\|_{L^s} \|uv\|_{L^{s/(s-1)}} \\
&\leq \|K_1\|_{L^\infty} \|u\|_{H^1} \|v\|_{H^1}+ \|K_2\|_{L^s} \|u\|_{L^{2s/(s-1)}} \|v\|_{L^{2s/(s-1)}}
\leq C  \|u\|_{H^1} \|v\|_{H^1}.
\end{align*}
Concerning the second part of the statement,
$|K*u^2(x)|\leq |K_1*u^2(x)|+|K_2*u^2(x)|$ for every $x\in\R^3$ and,
for $\eps>0$ small, by Sobolev embedding $u^2\in L^{(3-\eps)'}(\R^3)\cap L^{(3+\eps)'}(\R^3)$,
where $(3-\eps)'$ and $(3+\eps)'$ are the conjugate exponents of
$3-\eps$ and $3+\eps$ respectively. Then the assertion follows directly from
\cite[Lemma 2.20, p.70]{ll}.
\end{proof}
Let ${\mathcal S}_a$ denote the set of (complex) least energy
solutions $u$ to equation~\eqref{eq:limiting} such that
$$
|u(0)|=\max_{x \in\mathbb{R}^3} |u(x)|.
$$
By Lemma~\ref{RemRaprGS}, up to a
constant phase change, we can assume that $u$ is real valued.
Moreover ${\mathcal S}_a\not=\emptyset$, see Remark~\ref{nonEmN}.

\begin{proposition} \label{prop:compact}
For any $a>0$ the set ${\mathcal S}_a$ is compact in $H^1(\R^3,\R)$
and there exist positive constants $C,\sigma$ such that $u(x)\leq
C\exp(-\sigma |x|)$ for any $x\in\R^3$ and all $u\in{\mathcal S}_a$.
\end{proposition}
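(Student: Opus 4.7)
The plan rests on three pillars: a priori uniform $H^1$ and $L^\infty$ bounds on $\mathcal{S}_a$, compactness of minimizing sequences with translations pinned by the normalization $u(0)=\max u$, and a supersolution comparison yielding the exponential decay.

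First I would establish the uniform bounds. By the system leading to~\eqref{finalconcl} in the proof of Lemma~\ref{equiv-min}, every critical point $w$ of $J$ with $J(w)=m$ satisfies $\|\nabla w\|_{L^2}^2=m$ and $\|w\|_{L^2}^2=3m/a$; applied to $u\in\mathcal{S}_a$ (where $J(u)=\Gamma$), this fixes both norms to constants depending only on $a$ and $\Gamma$, so $\mathcal{S}_a$ is bounded in $H^1(\R^3)$. Estimate~\eqref{growth} lets one split $W=W\chi_{B_1}+W\chi_{\R^3\setminus B_1}$, the first summand lying in $L^{3/2}(\R^3)$ and the second in $L^\infty(\R^3)$, so Lemma~\ref{lemma:2.12} yields a uniform bound $\|W\ast u^2\|_{L^\infty}\leq C$. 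Viewing the limiting equation as $-\Delta u+au=V_u\,u$ with $V_u:=W\ast u^2$ uniformly bounded, a Brezis--Kato/Moser iteration gives a uniform estimate $\|u\|_{L^\infty}\leq M$, and subsequent local Schauder estimates yield uniform $C^{1,\alpha}_{\mathrm{loc}}$ bounds on $\mathcal{S}_a$.

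For compactness, take any $(u_n)\subset\mathcal{S}_a$. By Corollary~\ref{addprop}, every $u_n$ is a minimizer of $\mathcal{E}$ on $\mathcal{M}_{\rho_0}$ with $\rho_0=3\Gamma/a$, so $(u_n)$ is a minimizing sequence for this constrained problem. The concentration--compactness argument of~\cite{CL-cmp}, reused in the proof of Theorem~\ref{orbitstab}, provides translations $y_n\in\R^3$ along which, up to a subsequence, $u_n(\cdot+y_n)$ converges strongly in $H^1(\R^3)$ to some minimizer $u_\infty$; by the previous step the convergence is moreover locally uniform. I then claim $(y_n)$ is bounded. Indeed $u_\infty\in H^1(\R^3)\cap L^\infty(\R^3)$ is continuous with $u_\infty(z)\to 0$ as $|z|\to\infty$; if $|y_n|\to\infty$, then $u_n(0)=u_\infty(-y_n)+o(1)\to 0$, contradicting the identity $u_n(0)=\|u_n\|_{L^\infty}$ and the strict positivity $\liminf_n\|u_n\|_{L^\infty}>0$ (ensured since vanishing is excluded in the alternative). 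Consequently $y_n\to y^\ast$ along a subsequence, and $u_n\to u_\infty(\cdot-y^\ast)\in\mathcal{S}_a$ strongly in $H^1(\R^3)$.

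Finally, for the exponential decay the plan is to upgrade the pointwise decay~\eqref{tozeroconv} to uniform decay on $\mathcal{S}_a$: the $H^1$-compactness established above, combined with the continuity of $v\mapsto W\ast v$ into $L^\infty$ from Lemma~\ref{lemma:2.12}, produces for every $\eta>0$ a radius $R_\eta$ such that $(W\ast u^2)(x)<\eta$ whenever $|x|\geq R_\eta$ and $u\in\mathcal{S}_a$, via a standard finite-cover argument on the compact set $\mathcal{S}_a$. Taking $\eta=a/2$ gives $-\Delta u+(a/2)u\leq 0$ on $\R^3\setminus B_{R_\eta}$, and comparison with the radial supersolution $\phi(x)=Me^{-\sigma(|x|-R_\eta)}$, with $\sigma^2<a/2$ and $M\geq\sup_{u\in\mathcal{S}_a}\|u\|_{L^\infty}$, yields the bound $u(x)\leq C_1 e^{-\sigma|x|}$ uniformly on $\mathcal{S}_a$. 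The main obstacle I expect is the control of the translations $(y_n)$ in the compactness step without circularly invoking the exponential decay we are trying to establish; the remedy is to rely only on continuity and on the $H^1$-vanishing of $u_\infty$ at infinity, together with the strict subadditivity of $\Lambda$ on $\mathcal{M}_{\rho_0}$ that excludes vanishing in the concentration--compactness alternative.
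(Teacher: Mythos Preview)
Your approach is correct and runs in the opposite order from the paper. The paper first shows that $u(x)\to 0$ as $|x|\to\infty$ \emph{uniformly} over $\mathcal{S}_a$ via a two--bump energy contradiction: assuming $u_m(x_m)\geq\sigma$ with $|x_m|\to\infty$, it extracts nontrivial weak limits $u$ and $v$ of $u_m$ and $u_m(\cdot+x_m)$, both solving~\eqref{eq:limiting}, and derives $m_a\geq 2m_a+o(1)$. From this it obtains uniform exponential decay, and only then proves compactness by using the decay to force $u_n\to u$ in $L^{12/5}$, whence $\D(u_n)\to\D(u)$ and strong $H^1$ convergence. You instead secure compactness first, exploiting the constrained characterization of Corollary~\ref{addprop} and the Cazenave--Lions argument, and then deduce uniform decay of $W\ast u^2$ by a finite cover of the compact set $\mathcal{S}_a$; this sidesteps the paper's rather delicate pointwise analysis of $\varphi_m\to\varphi$ and the Vitali argument, at the price of leaning more heavily on Lemma~\ref{equiv-min} and Corollary~\ref{addprop}.

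One step should be tightened. In the pinning of $(y_n)$ you write $u_n(0)=u_\infty(-y_n)+o(1)$, but you have only claimed \emph{locally} uniform convergence of $v_n:=u_n(\cdot+y_n)$ to $u_\infty$; if $|y_n|\to\infty$ the evaluation point $-y_n$ leaves every compact, so local uniform convergence does not yield this identity. Two easy fixes are available. Either upgrade to global $L^\infty$ convergence: from $v_n\to u_\infty$ in $H^1$ and Lemma~\ref{lemma:2.12} one gets $(W\ast v_n^2)v_n\to(W\ast u_\infty^2)u_\infty$ in $L^p$ for $p\in[2,6]$, hence by Calder\'on--Zygmund $v_n\to u_\infty$ in $W^{2,p}(\R^3)\hookrightarrow L^\infty(\R^3)$. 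Or argue with mass: your uniform $C^{1}$ bound gives $v_n\geq c/2$ on a fixed ball $B_r(-y_n)$ (where $c=\liminf u_n(0)>0$), so $\int_{\{|z|>R\}}|v_n|^2\geq (c/2)^2|B_r|$ whenever $|y_n|>R+r$, contradicting strong $L^2$ convergence of $v_n$ to $u_\infty$.
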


\begin{proof}
If $L_a:H^1(\R^3) \to \R$ denotes the functional associated with~\eqref{eq:limiting},
$$
L_a (u) =L(u)=\frac{1}{2} \int_{\R^3} |\nabla u|^2dx + \frac{a}{2} \int_{\R^3}u^2dx - \frac{1}{4} \D(u),
$$
since $\D(u)=\|\nabla u\|^2_{L^2} + a\|u\|^2_{L^2}$ for all $u\in \mathcal{S}_a$, we have
\begin{equation*}
m_a=L_a(u) = \frac{1}{4}\big( \|\nabla u\|^2_{L^2} + a \|u\|^2_{L^2} \big)  ,
\end{equation*}
where $m_a=\min\{L_a(u):\text{$u\neq 0$ solves
\eqref{eq:limiting}}\}$. Hence, it follows that the set $\mathcal{S}_a$ is bounded in $H^1(\R^3)$. Moreover, ${\mathcal S}_a$ is
also bounded in $L^\infty(\R^3)$ and $u(x)\to 0$ as $|x|\to\infty$
for any $u\in {\mathcal S}_a$. Indeed, from the
Hardy-Littlewood-Sobolev inequality~\eqref{Hardyin}, for any $q\geq 3$
\begin{equation} \label{eq:tutti_q}
\|W*u^2\|_{L^{q}} \leq C \| |x|^{-1} * u^2\|_{L^{q}} \leq C\|u\|^2_{L^{6q/(3+2q)}}\leq C \|u\|^2_{L^{6}}\leq C,
\end{equation}
so that $|x|^{-1} * u^2\in L^q(\R^3)$, for any $q\geq 3$.
Setting
$$
f(x)=(W* u^2)(x)u(x)-au(x),
$$
for all $m$ with $2\leq m<6$, by H\"older and Sobolev inequalities, we get
\[
\|f\|_{L^m}\leq C\|u\|_{L^m}+C \|(|x|^{-1} * u^2)u\|_{L^m}\leq
C\|u\|_{L^m}+ C\| |x|^{-1} *
u^2\|_{L^{\frac{6m}{6-m}}}\|u\|_{L^{6}}\leq C.
\]
By virtue of~\eqref{tozeroconv} of Lemma~\ref{lemma:2.12} we have
$(W*u^2)(x)\to 0$ as $|x|\to\infty$, and thus standard arguments
show that $u$ is exponentially decaying to zero at infinity (see
also the argument just before~\eqref{toexpdec}) which readily
implies  $f(x) \in L^1(\R^3)$. From equation~\eqref{eq:limiting},
namely $-\Delta u=f$, by Calderon-Zygmund estimate
(see~\cite[Theorem 9.9, Corollary 9.10 and lines just before
it]{GT}; note also that $f\in L^1(\R^3)$ so that by means of
\cite[Lemma A.5, p.550]{bebrcr} it holds $u=G*f$, where $G$ is the
fundamental solution of $-\Delta$ on $\R^N$), it follows that $u\in
W^{2,m}(\R^3)$, for every $2\leq m<6$, with $\|u\|_{W^{2,m}}$
uniformly bounded in ${\mathcal S}_a$. Hence, it follows that $u$ is
a bounded function which vanishes at infinity and ${\mathcal S}_a$
is uniformly bounded in $C^{1,\alpha}(\R^3)$ (take $3<m<6$ to get
this embedding). Actually $u$ has further regularity as, using again
the equation, the boundedness of $u$, Calderon-Zygmund estimate, as
well as the Hardy-Littlewood-Sobolev inequality, we have
\begin{align*}
\|u\|_{W^{2,m}}&\leq C\|f\|_{L^m} \leq C\|u\|_{L^m}+C\| (|x|^{-1} * u^2) u\|_{L^m} \\
&\leq C+C\| |x|^{-1} * u^2\|_{L^m}\leq C,
\end{align*}
so that $u\in W^{2,m}(\R^3)$, for every $m\geq 2$. Let us now show that the limit
$u(x)\to 0$ as $|x|\to \infty$ holds uniformly for $u\in{\mathcal
S}_a$. Assuming by contradiction that $u_m(x_m)\geq\sigma>0$ along
some sequences $(u_m)\subset {\mathcal S}_a$ and $(x_m)\subset\R^3$
with $|x_m|\to\infty$, shifting $u_m$ as $v_m(x)=u_m(x+x_m)$ it follows
that $(v_m)$ is bounded in $H^1 (\R^3)\cap L^\infty(\R^3)$ and it
converges, up to a subsequence to a function $v$, weakly in
$H^1(\R^3)$ and locally uniformly in $C(\R^3)$. If $u$ denotes the
weak limit of $u_m$, we also claim that $u,v$ are both
solutions to equation~\eqref{eq:limiting}, which are nontrivial
as follows from (local) uniform convergence and
$u_m(0)\geq u_m(x_m)\geq\sigma$ (since $0$ is a global
maximum for $u_m$) and $v_m(0)=u_m(x_m)\geq\sigma$. To see that $u,v$
are solutions to~\eqref{eq:limiting}, set
\[
\varphi_m(x)=\int_{\R^3}W(x-y)u_m^2(y) dy,
\qquad
\varphi(x)=\int_{\R^3}W(x-y)u^2(y) dy.
\]
Let us show that $\varphi_m(x)\to\varphi(x)$, as $m\to\infty$, for any fixed $x\in\R^3$. Indeed,
we can write $\varphi_m(x)-\varphi(x)=I^1_m(\rho)+I^2_m(\rho)$ for any $m\geq 1$ and any $\rho>0$,
where we set
\begin{align*}
I_m^1(\rho)&=\int_{B_\rho(0)}W(x-y)(u_m^2(y)-u^2(y)) dy,\\
I_m^2(\rho)&=\int_{\R^3\setminus B_\rho(0)}W(x-y)(u_m^2(y)-u^2(y)) dy.
\end{align*}
Fix $x\in\R^3$ and let $\eps>0$. Choose $\rho_0>0$ sufficiently large that
\begin{equation}
    \label{Im2}
I_m^2(\rho_0)\leq\int_{\R^3\setminus B_{\rho_0}(0)}\frac{C}{|y|-|x|}|u_m^2(y)-u^2(y)| dy\leq
\frac{C}{\rho_0-|x|}<\frac{\eps}{2}.
\end{equation}
On the other hand, by the uniform local convergence of $u_m$ to $u$ as $m\to\infty$
and H\"older inequality, for some $1<r<3$
\begin{align}
    \label{Im1}
I^1_m(\rho_0) &\leq C\|u_m-u\|_{L^{r'}(B_{\rho_0}(0))}\Big(\int_{B_{\rho_0}(0)}\frac{1}{|x-y|^{r}}|u_m(y)+u(y)|^r dy\Big)^{1/r} \\
&\leq C\|u_m-u\|_{L^{r'}(B_{\rho_0}(0))}\Big(\int_{B_{\rho_0}(x)}\frac{1}{|y|^{r}}dy\Big)^{1/r} \notag \\
& \leq C\|u_m-u\|_{L^{r'}(B_{\rho_0}(0))}<\frac{\eps}{2}  \notag
\end{align}
for all $m$ sufficiently large, where $r'$ denotes the conjugate exponent of $r$. The bound $r<3$ ensures that
the singular integral which appears in the second inequality is finite.
Combining~\eqref{Im2} with~\eqref{Im1} concludes the proof of the pointwise convergence of $\varphi_m$ to $\varphi$.
For all $\eta \in C_0^\infty(\mathbb{R}^3)$ and any measurable set $E$, we observe that for any $q \in [2,6)$ and $p^{-1}+q^{-1}=1$
\begin{multline*}
\left| \int_E \varphi_m(x) u_m(x) \eta(x) \, dx \right| \leq
\left( \int_E |\eta(x)|^p \, dx \right)^{1/p} \left( \int_E |\varphi_m(x) u_m(x)|^q \, dx \right)^{1/q} \\
\leq  \|\eta\|_{L^p(E)} \left( \int_{\mathbb{R}^3} |u_m(x)|^6 \, dx \right)^{1/6} \left( \int_{\mathbb{R}^3} |\varphi_m(x)|^{\frac{6q}{6-q}}\, dx \right)^{\frac{6-q}{6q}},
\end{multline*}
and we observe that $6q/(6-q) \geq 3$ since $q \geq 2$. Since we already know that $\{\varphi_m\}$ is bounded in any $L^r$ with $r \geq 3$, we conclude that for some constant $C>0$
\[
\left| \int_E \varphi_m(x) u_m(x) \eta(x) \, dx \right| \leq
C  \|\eta\|_{L^p(E)},
\]
and the last term can be made arbitrarily small by taking $E$ of small measure. Since the support of $\eta$ is a compact set and $\varphi_m u_m \eta \to \varphi u \eta$ almost everywhere, the Vitali Convergence Theorem implies
\begin{equation*}
\lim_{m \to +\infty} \int_K \varphi_m u_m\eta\, dx = \int_K \varphi u \eta\, dx,
\end{equation*}
for all $\eta\in C^\infty_c(\R^3)$ with compact support $K$. This
concludes the proof that $u,v$ are nontrivial
solutions to~\eqref{eq:limiting}. It follows that, for any $m$ and $k$,
\begin{align*}
J_a(u_m) &= J_a(u_k)=\frac{1}{4} \int_{\R^3} (|\nabla u_m|^2 + a u^2_m)dx, \\
J_a(u) &\geq J_a(z)=m_a, \\
J_a(v) &\geq  J_a(z)=m_a,
\end{align*}
for all $z\in {\mathcal S}_a$. On the other hand, for any $R>0$ and
$m\geq 1$ with $2R\leq |x_m|$,
\begin{align*}
m_a =J_a(u_m) &\geq \frac{1}{4} \liminf_m\int_{B_R(0)} (|\nabla u_m|^2 + a u^2_m)dx
+\frac{1}{4}\liminf_m \int_{B_R(0)} (|\nabla v_m|^2 + a v^2_m)dx\\
&\geq J_a(u)+J_a(v)-\eps\geq 2m_a-o(1)
\end{align*}
as $R\to\infty$, which yields a contradiction for $R$ large enough. Hence the conclusion follows. Let us now prove that
\begin{equation}
\label{azerocon}
\lim_{|x|\to\infty}\varphi(x)=0,\qquad\varphi(x)=\int_{\R^3}W(x-y)u^2(y)\, dy.
\end{equation}
Notice that, for $\eps>0$ small, $|x|^{-1}$ can be written as the sum of
$|x|^{-1}\chi_{B(0,1)}\in L^{3-\eps}(\R^3)$ and $|x|^{-1}\chi_{\R^3\setminus B(0,1)}\in
L^{3+\eps}(\R^3)$. In particular~\eqref{tozeroconv} of Lemma~\ref{lemma:2.12}
is fulfilled for $u\in H^1(\R^3)$.
Then, since $W\leq C|x|^{-1}$, the quantity $\sup_{|x| \geq R} |W*u^2|$ can be made arbitrarily
small by choosing $R$ large enough.
In light of \eqref{azerocon}, let $R_a>0$ such that
$\varphi(x)\leq \frac{a}{2}$, for any $|x|\geq R_a$. As a consequence,
\begin{equation}
    \label{toexpdec}
-\Delta u(x)+\frac{a}{2}u(x)\leq 0,\qquad\text{for $|x|\geq R_a$}.
\end{equation}
It is thus standard to see that this yields the exponential decay of
$u$, with uniform decay constants in ${\mathcal S}_a$. We can
finally conclude the proof. Let $(u_n)$ be any sequence in
${\mathcal S}_a$. Up to a subsequence it follows that $(u_n)$
converges weakly to a function $u$ which is also a solution to
equation~\eqref{eq:limiting}. If $\D$ is the function defined
in~\eqref{defDD}, we immediately get
\begin{equation}
\label{quasistron}
\lim_{n \to +\infty}\int_{\R^3}|\nabla u_n|^2+au_n^2-\D(u_n)=0
=\int_{\R^3}|\nabla u|^2+au^2-\D(u).
\end{equation}
Hence the desired strong convergence of $(u_n)$ to $u$ in $H^1(\R^3)$ follows once we prove that
$\D(u_n)\to \D(u)$, as $n\to\infty$. In view of the uniform
exponential decay of $u_n$ it follows that $u_n\to u$ strongly in $L^{12/5}(\R^3)$, as $n\to\infty$.
Taking into account that $(u_n)$ is bounded in $H^1(\R^3)$ and that $W$ is even, we get
the inequality
\begin{equation*}
|\D(u_n)-\D(u)|\leq \sqrt{\tilde\D(||u_n|^2-|u|^2|^{1/2})}\sqrt{\tilde\D((|u_n|^2+|u|^2)^{1/2})},\quad n\in{\mathbb N},
\end{equation*}
being $\tilde \D$ defined as the operator $\D$ with the Coulomb kernel $|x|^{-1}$ in place of $\D$.
In fact, taking into account \cite[Theorem 9.8, p.250]{ll} and since $W$ is even, we have
\begin{align*}
|\D(u_n)-\D(u)|&=\Big|\int_{\R^6}W(x-y)|u_n(x)|^2|u_n(y)|^2dxdy-\int_{\R^6}W(x-y)|u(x)|^2|u(y)|^2dxdy\Big|  \\
&=\Big|\int_{\R^6}W(x-y)|u_n(x)|^2|u_n(y)|^2dxdy+\int_{\R^6}W(x-y)|u_n(x)|^2|u(y)|^2dxdy  \\
& -\int_{\R^6}W(x-y)|u(x)|^2|u_n(y)|^2dxdy-\int_{\R^6}W(x-y)|u(x)|^2|u(y)|^2dxdy\Big| \\
&=\Big|\int_{\R^6}W(x-y)(|u_n(x)|^2-|u(x)|^2)(|u_n(y)|^2+|u(y)|^2)dxdy\Big| \\
&\leq \int_{\R^6}W(x-y)||u_n(x)|^2-|u(x)|^2|||u_n(y)|^2+|u(y)|^2|dxdy \\
&\leq C\int_{\R^6}|x-y|^{-1}||u_n(x)|^2-|u(x)|^2|||u_n(y)|^2+|u(y)|^2|dxdy \\
&\leq C \sqrt{\tilde\D(||u_n|^2-|u|^2|^{1/2})}\sqrt{\tilde\D((|u_n|^2+|u|^2)^{1/2})}.
\end{align*}
Then, by Hardy-Littlewood-Sobolev inequality and H\"older's inequality, it follows that
\begin{equation*}
|\tilde \D(u_n)-\tilde\D(u)|^2 \leq  C \|\,||u_n|^2-|u|^2|^{1/2}\|_{L^\frac{12}{5}}^{4}\|
\,(|u_n|^2+|u|^2)^{1/2}\|_{L^\frac{12}{5}}^{4}
\leq  C \|u_n-u\|_{L^\frac{12}{5}}^{2}.
\end{equation*}
As a consequence
\[
\D(u_n)=\D(u)+o(1),\quad\text{as $n\to\infty$},
\]
which concludes the proof in light of formula~\eqref{quasistron}.
\end{proof}

\medskip

\section{The penalization argument}

Throughout this and the following sections we shall mainly used the arguments of~\cite{cjs}
highlighting the technical steps where the Hartree nonlinearity is involved in place of
the local one. For the sake of self-containedness and for the reader's convenience we develop
the arguments with all the detail.

\vskip2pt
For any set $\Omega \subset \R^3$ and $\ge >0$, let $\Omega_\ge = \{
x \in \R^3: \ge x \in \Omega \}$.

\subsection{Notations and framework}

The following lemmas, taken
from \cite{cjs} show that the norm in $H_\varepsilon$ is locally
equivalent to the standard $H^1$ norm.
\begin{lemma} \label{equi-norm}
  Let $K \subset \R^3$ be an arbitrary,
  fixed, bounded domain. Assume that $A$ is bounded on $K$ and $0 <
  \alpha \leq V \leq \beta$ on $K$ for some $\alpha$, $\beta >0$. Then,
  for any fixed $\e \in [0,1],$ the norm
\[
\|u\|_{K_\e}^2 = \int_{K_\e}\bigg|\left(\frac{1}{\iu}\nabla -
  A_\e(y)\right) u \bigg|^2 + {V}_\e (y) |u|^2 dy
\]
is equivalent to the usual norm on $H^1(K_\e, \C)$. Moreover these
equivalences are uniform, namely there exist constants $c_1, c_2 >0$ independent
of $\e \in [0,1]$ such that
\[
c_1 \|u\|_{K_\e} \leq \|u\|_{H^1(K_\e, \C)} \leq c_2 \|u\|_{K_\e}.
\]
\end{lemma}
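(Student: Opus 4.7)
The plan is to establish both inequalities $c_1\|u\|_{K_\e}\le\|u\|_{H^1(K_\e,\C)}\le c_2\|u\|_{K_\e}$ by avoiding the direct expansion of $|(\tfrac{1}{\iu}\nabla-A_\e)u|^2$ (whose cross term $-2A_\e\cdot\Im(\bar u\nabla u)$ has indefinite sign and is awkward to absorb when $\|A\|_{L^\infty(K)}$ is large compared to $\alpha$). Instead I would exploit the elementary identity $|\tfrac{1}{\iu}\nabla u|=|\nabla u|$ and apply the triangle inequality to the splitting
\[
\tfrac{1}{\iu}\nabla u \;=\; \bigl(\tfrac{1}{\iu}\nabla-A_\e\bigr)u \;+\; A_\e u,
\]
which gives, together with $(a+b)^2\le 2a^2+2b^2$, the two pointwise bounds
\[
\bigl|\bigl(\tfrac{1}{\iu}\nabla-A_\e\bigr)u\bigr|^2\le 2|\nabla u|^2+2|A_\e|^2|u|^2,\qquad |\nabla u|^2\le 2\bigl|\bigl(\tfrac{1}{\iu}\nabla-A_\e\bigr)u\bigr|^2+2|A_\e|^2|u|^2.
\]

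The first preliminary step is to note that, since $K$ is bounded and $A$, $V$ are controlled on $K$, the quantities $M:=\sup_{x\in K}|A(x)|$, $\alpha$, $\beta$ are finite and, crucially, do not involve $\e$; and since $y\in K_\e$ if and only if $\e y\in K$, one has $|A_\e(y)|\le M$ and $\alpha\le V_\e(y)\le\beta$ uniformly in $y\in K_\e$ and $\e\in[0,1]$.

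For the upper estimate on $\|u\|_{K_\e}$, I would integrate the first pointwise bound above, add the trivial inequality $V_\e|u|^2\le\beta|u|^2$, and immediately obtain $\|u\|_{K_\e}^2\le 2\|\nabla u\|_{L^2(K_\e)}^2+(2M^2+\beta)\|u\|_{L^2(K_\e)}^2$, producing a constant $c_2$ independent of $\e$. For the lower estimate, I would integrate the second pointwise bound and use $|u|^2\le\alpha^{-1}V_\e|u|^2$ to absorb the $|A_\e|^2|u|^2$ term into $\|u\|_{K_\e}^2$, yielding $\|\nabla u\|_{L^2(K_\e)}^2\le(2+2M^2/\alpha)\|u\|_{K_\e}^2$; combining this with the trivial $\|u\|_{L^2(K_\e)}^2\le\alpha^{-1}\|u\|_{K_\e}^2$ gives the reverse comparison with a constant $c_1$ depending only on $\alpha$ and $M$.

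There is no real obstacle here beyond choosing the right algebraic manipulation: the only point worth emphasizing is that the uniformity in $\e\in[0,1]$ is automatic because the scaling $A_\e(y)=A(\e y)$, $V_\e(y)=V(\e y)$ preserves $L^\infty$ bounds, so that the constants $c_1,c_2$ come out as explicit functions of $M,\alpha,\beta$ alone and do not acquire any $\e$-dependence.
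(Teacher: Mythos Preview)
Your argument is correct: the triangle-inequality splitting of $\tfrac{1}{\iu}\nabla u$ together with the uniform $L^\infty$ bounds $|A_\e|\le M$ and $\alpha\le V_\e\le\beta$ on $K_\e$ gives both directions with constants depending only on $M,\alpha,\beta$. (There is a harmless swap of the labels $c_1$ and $c_2$ in your write-up, but both inequalities are established.) The paper does not actually prove this lemma; it simply imports it from~\cite{cjs}, so there is no in-text argument to compare against, but your proof is the standard one and is certainly what the cited reference does.
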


\begin{corollary} \label{equiv-norm-consequence}
   Retain the setting of~Lemma \ref{equi-norm}. Then the following facts hold.
\begin{enumerate}
\item[(i)] If $K$ is compact, for any $\e \in (0,1]$ the norm
  \[
  \|u\|_K^2 := \int_K \left| \left( \frac{1}{\iu}\nabla - A_\e (y)
    \right )u\right|^2 + {V}_\ge (y) |u|^2 dy
  \]
  is uniformly equivalent to the usual norm on $H^1(K, \C)$.
\item[(ii)] For $A_0 \in \R^3$ and $b>0$ fixed, the norm
  \[
  \|u\|^2 := \int_{\R^3}\left| \left( \frac{1}{\iu}\nabla - A_0 \right)
    u\right|^2 + b|u|^2 dy
  \]
  is equivalent to the usual norm on $H^1(\R^3, \C)$.
\item[(iii)] If $(u_{\ge_n}) \subset H^1(\R^3, \C)$ satisfies
  $u_{\ge_n} = 0$ on $\R^3 \setminus K_{\ge_n}$ for any $n \in
  \mathbb{N}$ and $u_{\ge_n} \to u$ in $H^1(\R^3, \C)$ then
  $\|u_{\ge_n} - u\|_{{\ge_n}} \to 0$ as $n \to \infty.$
\end{enumerate}
\end{corollary}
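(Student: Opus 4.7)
I would treat the three assertions separately, noting that the shared engine is a combination of the diamagnetic-type bound
\[
\bigl|\tfrac{1}{\iu}\nabla u - A_0 u\bigr|^2 \leq 2|\nabla u|^2 + 2|A_0|^2|u|^2, \qquad
|\nabla u|^2 \leq 2\bigl|\tfrac{1}{\iu}\nabla u - A_0 u\bigr|^2 + 2|A_0|^2|u|^2,
\]
plus $L^2$–control coming from the potential.

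\smallskip

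For (i), I would essentially specialise the proof of Lemma~\ref{equi-norm}. The crucial observation is that for $\e\in(0,1]$ and $y\in K$ the point $\e y$ ranges inside the bounded set $\bigcup_{\e\in(0,1]}\e K$. On this bounded set $A$ is bounded by continuity, and $V$ is bounded (taking the compact set $K$ slightly larger if needed) with a strictly positive infimum inherited from $0<\alpha\le V\le\beta$ on $K$. Consequently there exist constants $M,\alpha'>0$, independent of $\e\in(0,1]$, such that $|A_\e(y)|\le M$ and $\alpha'\le V_\e(y)\le M$ for all $y\in K$. The upper bound $\|u\|_K^2\le C\|u\|_{H^1(K)}^2$ then follows directly from the first diamagnetic inequality above, while the lower bound uses the second inequality to control $\|\nabla u\|_{L^2(K)}^2$ by $\|u\|_K^2$ and then absorbs the $|u|^2$ term via $\alpha'\|u\|_{L^2(K)}^2\le\|u\|_K^2$.

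\smallskip

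For (ii), I would use the global gauge transformation $w(x)=e^{-\iu A_0\cdot x}u(x)$. A direct computation yields
\[
\nabla w = e^{-\iu A_0\cdot x}\bigl(\nabla u - \iu A_0 u\bigr), \qquad |\nabla w|^2 = \bigl|\tfrac{1}{\iu}\nabla u - A_0 u\bigr|^2, \qquad |w|=|u|,
\]
so $\|u\|^2 = \|\nabla w\|_{L^2}^2 + b\|w\|_{L^2}^2$, which (because $b>0$) is equivalent to $\|w\|_{H^1(\R^3)}^2$. The relation $\nabla u = e^{\iu A_0\cdot x}(\nabla w + \iu A_0 w)$ together with $|A_0|<\infty$ shows in turn that $\|u\|_{H^1(\R^3)}$ and $\|w\|_{H^1(\R^3)}$ are equivalent, so (ii) follows by composing the two equivalences.

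\smallskip

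For (iii), I would split $\R^3=K_{\e_n}\sqcup(\R^3\setminus K_{\e_n})$ and exploit the support condition. On $K_{\e_n}$ the bounds on $A_{\e_n}$ and $\tilde V_{\e_n}$ from (i) are uniform in $n$, so
\[
\int_{K_{\e_n}}\Bigl|\bigl(\tfrac{1}{\iu}\nabla-A_{\e_n}\bigr)(u_{\e_n}-u)\Bigr|^2+\tilde V_{\e_n}|u_{\e_n}-u|^2\,dy\le C\|u_{\e_n}-u\|_{H^1(\R^3)}^2\to 0.
\]
On $\R^3\setminus K_{\e_n}$ we have $u_{\e_n}\equiv 0$, so the integrand reduces to $|\tfrac{1}{\iu}\nabla u-A_{\e_n}u|^2+\tilde V_{\e_n}|u|^2$, and I would handle this tail by density: pick $\phi_k\in C_0^\infty(\R^3)$ with $\|u-\phi_k\|_{H^1}<1/k$; since $K_{\e_n}$ exhausts $\R^3$ as $\e_n\to 0$, eventually $\mathrm{supp}\,\phi_k\subset K_{\e_n}$, making the $\phi_k$–contribution to the tail vanish, while the $(u-\phi_k)$–piece is controlled by $\|u-\phi_k\|_{\e_n}$, which reduces via (i) to a multiple of $\|u-\phi_k\|_{H^1}<1/k$. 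Letting first $n\to\infty$ and then $k\to\infty$ yields the claim.

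\smallskip

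The main obstacle is the tail estimate in part (iii): one must control $\int_{\R^3\setminus K_{\e_n}}\tilde V_{\e_n}|u|^2\,dy$ and the analogous magnetic term for the limit $u$, which is not a priori in any $H_\e$. The fix above — approximating $u$ by compactly supported functions and using that $K_{\e_n}$ eventually contains any given compact set — is where the support hypothesis on $u_{\e_n}$ pays off, and this is the step requiring the most care.
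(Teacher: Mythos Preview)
The paper does not prove this corollary at all; it simply imports it from the reference~\cite{cjs}. So there is no ``paper's own proof'' to compare against, and the question is only whether your argument stands on its own.

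Your treatment of (ii) via the global gauge $w=e^{-\iu A_0\cdot x}u$ is clean and correct. Your treatment of (i) is essentially right, though the sentence about the lower bound on $V$ is shaky: the hypothesis $0<\alpha\le V\le\beta$ is stated on $K$, while you need it on $\{\e y:y\in K,\ \e\in(0,1]\}$, whose closure contains the origin; ``enlarging $K$'' does not fix this. In the paper's applications the potential in play is $\tilde V_\e\ge\tilde m>0$, so this is really an imprecision of the stated hypotheses rather than a flaw in your method.

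In (iii), however, there is a genuine gap. Your density argument ends with ``the $(u-\phi_k)$--piece is controlled by $\|u-\phi_k\|_{\e_n}$, which reduces via (i) to a multiple of $\|u-\phi_k\|_{H^1}$''. But (i) is a \emph{local} statement: it bounds the magnetic norm on a fixed compact set $K$ by the $H^1$--norm on $K$. It says nothing about the full norm $\|\cdot\|_{\e_n}$ over $\R^3$, and that is what you need here, since $u-\phi_k$ has no support restriction. If $A$ or $\tilde V$ is unbounded at infinity (which the standing assumptions do not exclude), $\|u-\phi_k\|_{\e_n}$ need not be dominated by $\|u-\phi_k\|_{H^1}$ --- it need not even be finite --- so the inductive step of your density argument does not close. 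You also use without justification that $K_{\e_n}$ exhausts $\R^3$; this requires $0\in\operatorname{int}K$, which should be stated.

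A repair along your lines would be either to add a global hypothesis (e.g.\ $A$ and $\tilde V$ bounded, which is exactly what makes $\|\cdot\|_{\e_n}$ globally equivalent to $\|\cdot\|_{H^1}$), or to argue instead that the limit $u$ inherits enough decay/support from the $u_{\e_n}$ to make the tail integrable against the weights --- which is indeed the situation in every place the paper invokes (iii), where $u$ is a cut-off ground state with compact support.
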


For future reference we recall the following \textit{Diamagnetic
  inequality}: for every $u \in H_\e$,
\begin{equation}\label{eq:2.6}
  \left| \left(\frac{\nabla}{\iu} - A_\e \right) u\right| \geq \big|
  \nabla |u|\big|, \quad \mbox{a.e. in $\RN$. }
\end{equation}
See \cite{EL} for a proof. As a consequence of \eqref{eq:2.6}, $|u|
\in H^1(\R^3, \R)$ for any $u \in H_{\e}.$
\vskip4pt

For any $u \in H_\e,$ let us set
\begin{equation}
{\mathcal F}_\ge (u) = \frac{1}{2}\int_{\RN}| D^\ge u|^2 +V_\e(x) |u|^2
\, dx - \frac{1}{4}\int_{\RN \times \RN} W(x-y)|u(x)|^2 |u(y)|^2 dxdy
\end{equation}
where we set $D^\ge = (\frac{\nabla}{\iu} - A_\e)$. Define, for all $\eps>0$,
\begin{equation*}
\chi_\e(y) =
\begin{cases}
0 &\text{if $y \in O_\e$}, \\
\e^{-6 / \mu} &\text{if $y \notin O_\e$},
\end{cases}
\quad \chi^i_\e(y) =
\begin{cases}
0 & \text{if $y \in (O^i)_\e$}, \\
\e^{-6 / \mu} &\text{if $y \notin (O^i)_\e$},
\end{cases}
\end{equation*}
and
\begin{equation}
Q_\e(u) = \Big (\intr \chi_\e |u|^2 dx -  1\Big )^{\frac{5}{2}}_+, \quad
 Q^i_\e(u) = \Big (\intr \chi^i_\e |u|^2 dx - 1\Big
)^{\frac{5}{2}}_+.
\end{equation}
The functional $Q_\e$ will act as a penalization to force the concentration phenomena
of the solution to occur inside $O$. In particular, we remark that the penalization terms vanish on elements
whose corresponding $L^\infty$-norm is sufficiently small. This device
was firstly introduced in~\cite{BW2}. Finally we define the
functionals $\Gamma_\e,\Gamma_\e^1,\ldots,\Gamma_\e^k : H_\e \to \R$ by setting
\begin{equation}
\Gamma_\e(u) = {\mathcal F}_\ge(u) + Q_\e(u), \,\,\,  \Gamma_\e^i(u) =
{\mathcal F}_\ge(u) + Q^i_\e(u),\quad  i = 1,\ldots ,k.
\end{equation}
It is easy to check, under our assumptions, and using the
Diamagnetic inequality~\eqref{eq:2.6}, that the functionals
$\Gamma_\e$ and $\Gamma_\e^i$ are of class $C^1$ over $H_\e.$ Hence, a critical point of
${\mathcal F}_\ge $ corresponds to a solution of
\eqref{eq:Hartree-scaled}. To find solutions of
\eqref{eq:Hartree-scaled} which {\it concentrate} in $O$ as $\e \to
0,$ we shall look for a critical point of $\Gamma_\e$ for which
$Q_\e$ is zero.
\medskip

Let
\[
\M = \bigcup_{i=1}^k\M^i, \quad O = \bigcup_{i=1}^k O^i
\]
and for any set $B \subset \RN$ and $\alpha > 0,$ $B^\delta = \{x
\in \RN : \operatorname{dist}(x,B) \le \delta\}$ and set
\[ \delta = \frac{1}{10} \min \left\{\operatorname{dist}(\M,\RN
\setminus O),\, \min_{ i\ne j} \operatorname{dist}(O_i,O_j),\,
\operatorname{dist}(O,\Z) \right\}.
\]
We fix a $\beta \in (0,\delta)$ and a cutoff $\p \in
C_0^\infty(\RN)$ such that $0 \le \p \le 1,$ $\p(y) = 1$ for $|y|
\le \beta$ and $\p(y) = 0$ for $|y| \geq 2\beta$. Also, setting
$\p_\e(y) = \p(\e y)$  for each $x_i \in (\M^i)^\beta$ and $U_i \in
{\mathcal S}_{m_i},$ we define
\[
U_\e^{x_1,\dots,x_k}(y) = \sum_{i=1}^k e^{iA(x_i)(y-
\frac{x_i}{\e})} \p_\e \left(y-\frac{x_i}{\e} \right) U_i
\left(y-\frac{x_i}{\e} \right).
\]
We will find a solution, for sufficiently small $\e
> 0,$  near the set
\[ X_\e = \{U_\e^{x_1\dots,x_k}(y)  : \text{$x_i \in (\M^i)^\beta$ and $U_i \in {\mathcal S}_{m_i}$ for each $i = 1,\dots,k$}\}.
\]
For each $i \in \{1,\dots,k\}$ we fix an arbitrary $x_i \in \M^i$
and an arbitrary $U_i \in {\mathcal S}_{m_i}$ and we define
\[
\mathcal{W}^i_\e(y) = e^{iA(x_i)(y -\frac{x_i}{\e})}\p_\e
\left(y-\frac{x_i}{\e}\right) U_i \left(y-\frac{x_i}{\e} \right).
\]
Setting
\[
\mathcal{W}^i_{\e,t}(y) = e^{iA(x_i)(y -\frac{x_i}{\e})} \p_\e
\left(y-\frac{x_i}{\e} \right) U_i \left(\frac{y}{t}-\frac{x_i}{\e
t} \right),
\]
 we see that
$$
\lim_{t \to 0}\Vert \mathcal{W}^i_{\e,t}
\Vert_\e = 0,\qquad
\Gamma_\e(\mathcal{W}^i_{\e,t}) = {\mathcal F}_\ge(\mathcal{W}^i_{\e,t}),\quad t\geq 0.
$$
In the next Proposition we shall show that there exists
$T_i>0$ such that $\Gamma_\e(\W_{\e,T_i}^i) < - 2$
for any $\e >0$ sufficiently small. Assuming this holds true, let  $\gamma_\e^i(s) = \W^i_{\e,s}$ for $s
> 0$ and $\gamma_\e^i(0) = 0.$  For $s = (s_1,\dots,s_k) \in T =
[0,T_1] \times \ldots \times [0,T_k]$ we define
\[
\gamma_\e(s) = \sum_{i=1}^k \W^i_{\e,s_i} \quad \mbox{ and } \quad
 D_\e = \max_{s \in T } \Gamma_\e (\gamma_\e(s)).
 \]
Finally for each $i \in \{1,\dots,k\},$ let $E_{m_i} = L^c_{m_i}(U)$
for $U \in S_{m_i}$. Here $L^c_a$ is, for any $a>0$, the Euler functional associated to (\ref{eq:limiting}) in which solutions are considered as complex-valued.

\subsection{Energy estimates and Palais-Smale condition}
In what follows, we set
\[
E_m =
\min_{i \in \{1, \ldots, k\}}E_{m_i},\quad\,\,
E = \sum_{i=1}^k E_{m_i}.
\]
For a set $A \subset H_\e$  and $\alpha> 0$,
we let $A^\alpha = \{u \in  H_\e :   \| u - A \|_\ge \le
\alpha\}.$

\begin{proposition}\label{prop2}
There results
\begin{itemize}
\item[(i)] $\displaystyle \lim_{\e \to 0}D_\e = E,$
\item[(ii)] $\displaystyle  \limsup_{\e \to 0} \max_{s \in
\partial T} \Gamma_\e (\gamma_\e(s)) \le \tilde{E} = \max\{E
- E_{m_i} \ | \ i =1,\dots,k\} < E,$
\item[(iii)] for each $d> 0,$
there exists $\alpha >0$ such that for sufficiently small $\e > 0,$
\[
\Gamma_\e (\gamma_\e (s)) \geq D_{\e}- \alpha \text{ implies that }
\gamma_\e (s) \in X_\e^{d/2}.
\]
\end{itemize}
\end{proposition}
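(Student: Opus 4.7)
The plan is to exploit the near-disjointness of the bumps in $\gamma_\e(s)=\sum_{i=1}^k\W^i_{\e,s_i}$: since the centers $x_i/\e$ are separated by distances of order $1/\e$ while $\p_\e$ is supported in the ball of radius $2\beta/\e$ (with $2\beta<\delta$), for $\e$ small the supports are pairwise disjoint and contained in $(O^i)_\e$, so $Q_\e(\gamma_\e(s))=0$. The mixed terms $\int_{\R^6}W(x-y)|\W^i_{\e,s_i}(x)|^2|\W^j_{\e,s_j}(y)|^2dxdy$ with $i\neq j$ are bounded by $O(\e)$ using $W(x-y)\le C|x-y|^{-1}\le C\e$ on the relevant supports (homogeneity of $W$) together with the uniform $L^2$ bounds on $\mathcal{S}_{m_i}$ given by Proposition~\ref{prop:compact}. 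Applying the substitution $z=(y-x_i/\e)/s_i$ in $\mathcal F_\e(\W^i_{\e,s_i})$, using the magnetic gauge identity that turns $D^\e$ into $\nabla/\iu+(A(x_i)-A(\e y))$ (a perturbation that vanishes on the support as $\e\to 0$ by continuity of $A$), together with $V_\e(y)\to V(x_i)=m_i$ there (since $x_i\in\M^i$) and the homogeneity of $W$, yields
\begin{equation*}
\mathcal F_\e(\W^i_{\e,s_i})=f_i(s_i)+o_\e(1),\qquad f_i(t):=\frac{t}{2}\|\nabla U_i\|_{L^2}^2+\frac{m_it^3}{2}\|U_i\|_{L^2}^2-\frac{t^5}{4}\D(U_i),
\end{equation*}
uniformly for $s_i$ in a bounded interval. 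Consequently $\Gamma_\e(\gamma_\e(s))=\sum_{i=1}^k f_i(s_i)+o_\e(1)$.

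For (i), the critical-point equation $f_i'(1)=\tfrac{1}{2}\|\nabla U_i\|^2+\tfrac{3m_i}{2}\|U_i\|^2-\tfrac{5}{4}\D(U_i)=0$ is exactly the Pohozaev identity of Lemma~\ref{pohoz} at $a=m_i$. Viewing $f_i'(t)=0$ as a quadratic in $\tau=t^2$ and applying Vieta's formulas (the product of roots is negative) shows that $t=1$ is the unique positive critical point of $f_i$, with $f_i''(1)<0$; using the Nehari identity $\D(U_i)=\|\nabla U_i\|^2+m_i\|U_i\|^2$ one then finds $f_i(1)=\tfrac14\|\nabla U_i\|^2+\tfrac{m_i}{4}\|U_i\|^2=E_{m_i}$. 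Since $f_i(t)\to-\infty$ as $t\to\infty$, we fix $T_i$ large with $f_i(T_i)<-3$, which gives $\Gamma_\e(\W^i_{\e,T_i})<-2$ for small $\e$, while $D_\e\to\sum_{i=1}^kE_{m_i}=E$. For (ii), on $\partial T$ some index $i_0$ satisfies $s_{i_0}\in\{0,T_{i_0}\}$: in either case the $i_0$-th summand contributes at most $0$ (since $f_{i_0}(0)=0$ and $f_{i_0}(T_{i_0})<-3$), so $\limsup_\e\max_{\partial T}\Gamma_\e(\gamma_\e)\le\max_i(E-E_{m_i})=\tilde E<E$.

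For (iii), the strict non-degenerate maximum of each $f_i$ at $t=1$ yields a modulus $\eta(\alpha)\to 0^+$ such that $f_i(t)\ge E_{m_i}-\alpha$ forces $|t-1|\le\eta(\alpha)$. Hence $\Gamma_\e(\gamma_\e(s))\ge D_\e-\alpha$ implies $|s_i-1|\le\eta(\alpha)+o_\e(1)$ for every $i$. Since $\gamma_\e(1,\dots,1)$ equals $U^{x_1,\dots,x_k}_\e\in X_\e$ (for the very same $x_i\in\M^i$ and $U_i\in\mathcal{S}_{m_i}$ used to define $\gamma_\e$), continuity of $t\mapsto\W^i_{\e,t}$ in the $\|\cdot\|_\e$-norm—obtained via the change of variables above and Lemma~\ref{equi-norm} on the compact support—gives $\|\gamma_\e(s)-U^{x_1,\dots,x_k}_\e\|_\e\le d/2$ once $\alpha$ is small enough. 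The main technical obstacle is to ensure that every $o_\e(1)$ above is \emph{uniform} in $s_i\in[0,T_i]$, $x_i\in\M^i$, and $U_i\in\mathcal{S}_{m_i}$; this is achievable thanks to the compactness of $\mathcal{S}_{m_i}$ in $H^1(\R^3)$ and to the uniform exponential decay of its elements, both granted by Proposition~\ref{prop:compact}.
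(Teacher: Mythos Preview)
Your proof is correct and follows essentially the same route as the paper: both decouple $\Gamma_\e(\gamma_\e(s))$ into a sum $\sum_i f_i(s_i)+o_\e(1)$ via the scaling substitution and the Pohozaev/Nehari identities, then use that each $f_i$ has a strict nondegenerate global maximum at $t=1$ with value $E_{m_i}$ to read off (i)--(iii). You are in fact slightly more careful than the paper in two places---you explicitly bound the nonlocal cross terms $\int W(x-y)|\W^i_{\e,s_i}(x)|^2|\W^j_{\e,s_j}(y)|^2\,dx\,dy=O(\e)$ for $i\neq j$ (the paper writes $\mathcal F_\e(\gamma_\e(s))=\sum_i\mathcal F_\e(\gamma_\e^i(s))$ without comment), and you spell out the modulus-of-continuity argument for (iii)---but the underlying mechanism is identical.
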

\begin{proof} Since $\operatorname{supp}(\gamma_\e(s)) \subset \M_\e^{2\beta}$ for each $s
\in T,$ it follows that
$$
\Gamma_\e(\gamma_\e(s))= {\mathcal F}_\ge(\gamma_\e(s)) =
\sum_{i=1}^k {\mathcal F}_\ge
(\gamma_\e^i(s)).
$$
Arguing as in \cite[Proposition 3.1]{cjs}, we claim that for each
$i \in \{1, \dots, k\}$
\begin{equation}\label{eq:3.1} \lim_{\e \to 0} \int_{\R^3}\left|
\bigg( \frac{\nabla}{\iu} - A_\e(y)\bigg) W_{\e,s_i}^i\right|^2 dy =
s_i\int_{\R^3}|\nabla U_i|^2 dy.
\end{equation}
Using the exponentially decay of $U_i$ we have that, as $\e \to 0$,
\begin{equation}\label{eq:3.2}
\int_{\R^3}V_\e(y) |W^i_{\e,s_i}|^2 dy = \int_{\R^3}m_i \, \Big|
U_i \big( \frac{y}{s_i} \big) \Big|^2 dy+o(1) = m_i s_i^3
\int_{\R^3}|U_i|^2 dy+o(1),
\end{equation}
and, as $\e \to 0$,
\begin{multline*}
\int_{\R^{6}} W(x-y)|W^i_{\e,s_i}(x)|^2 |W^i_{\e,s_i}(y)|^2 dx dy \\
=\int_{\R^{6}} W(x-y)
 \Big| U_i \big(\frac{y}{s_i}\big)\Big|^2 \Big| U_i \big(\frac{x}{s_i}\big) \Big|^2  dx dy+o(1)
\\
= s_i^5 \int_{\R^{6}} W(x-y)|U_i(x)|^2|U_i(y)|^2 dxdy+o(1).
\end{multline*}
Thus, from the above limit and from~\eqref{eq:3.1}, \eqref{eq:3.2},
we derive
\begin{align*}
{\mathcal F}_\ge(\gamma_\e^i(s_i)) &=
 \frac{1}{2}\int_{\R^3} \bigg| \bigg(
\frac{\nabla}{\iu} - A_\e (y) \bigg) \gamma_\e^i(s_i) \bigg|^2 dy  +
V_\e(y)|\gamma_\e^i(s_i)|^2 dy
\\
&- \frac{1}{4} \int_{\R^{6}} W(x-y)|\gamma_\e^i(s_i)|^2 |\gamma_\e^i(s_i)|^2 dx dy \\
&= \frac{s_i}{2}\intr |\nabla U_i|^2 dy +  \frac{1}{2} s_i^{3} m_i\intr
|U_i|^2dy \\
&- \frac{1}{4} s_i^{5}  \int_{\R^{6}}
W(x-y)|U_i(x)|^2|U^i(y)|^2 dx\, dy + o(1).
\end{align*}
Using the Pohozaev identity~\eqref{eq:poho} as well as the relation
$$
\int_{\R^{3}} |\nabla U_i|^2dy + m_i \int_{\R^{3}} |U_i|^2dy = \int_{\R^{6}} W(x-y) |U_i(x)|^2 |U_i(y)|^2 \, dx dy,
$$
we see that
\[
{\mathcal F}_\ge(\gamma_\e^i(s_i)) = \left( \frac{ s_i}{6} + \frac{s_i^{3}}{2} - \frac{ 2s_i^{5}}{6} \right) m_
i\intr |U_i|^2 dy + o(1).
\]
Also
\[
\max_{t \in [0,\infty)} \left( \frac{t}{6} +
\frac{t^{3}}{2} - \frac{ 2t^{5}}{6}\right) m_i\intr |U_i|^2 dy
=E_{m_i}.
\]
At this point we deduce that (i) and (ii) hold. Clearly also the
existence of a $T_i >0$ such that $\Gamma_\e(W_{\e,T_i}^i) <-2$ is
justified. To conclude we just observe that, setting
\begin{equation*}
g(t) =
\frac{t}{6} +
\frac{t^{3}}{2} - \frac{ 2t^{5}}{6}
\end{equation*}
the derivative $g'(t)$ of $g(t)$
is positive for $t \in (0,1)$, negative for $t \in (1,+\infty)$, and vanishes at $t=1$.
We conclude by observing that $g''(1)  < 0.$
\end{proof}

\vskip2pt

Let us define
\[
\Phi^i_\ge = \left\{ \gamma \in C([0,T_i], H_\ge) :\, \gamma
(0)=\gamma_\e^i(0), \ \gamma (T_i) = \gamma_\e^i(T_i) \right\}
\]
and
\[
C^i_\ge = \inf_{\gamma \in \Phi^i_\ge} \max_{s_i \in [0,T_i]}
\Gamma^i_\ge (\gamma(s_i)).
\]
\vspace{1pt}

\begin{proposition}\label{upper}
For the level $C^i_\ge$ defined before, there results
\[
\liminf_{\ge \to 0} C^i_\ge \geq E_{m_i}.
\]
In particular, $\lim_{\ge \to 0} C^i_\ge = E_{m_i}$.
\end{proposition}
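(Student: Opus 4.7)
The plan is to prove $\liminf_{\varepsilon\to 0}C^i_\varepsilon\geq E_{m_i}$ by a crossing argument on the $L^2$-sphere characterised in Corollary~\ref{addprop}, after first dispatching the easy upper bound. The path $\gamma^i_\varepsilon$ lies in $\Phi^i_\varepsilon$ and, for every $s\in[0,T_i]$ and every sufficiently small $\varepsilon$, the support of $\mathcal{W}^i_{\varepsilon,s}$ is contained in $(O^i)_\varepsilon$: the cutoff $\p_\varepsilon(\,\cdot\,-x_i/\varepsilon)$ lives on $\{y:\varepsilon y\in B(x_i,2\beta)\}$, and $\beta<\delta$ forces $B(x_i,2\beta)\subset O^i$. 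Hence $Q^i_\varepsilon$ vanishes identically along $\gamma^i_\varepsilon$, and the computations in the proof of Proposition~\ref{prop2}(i) give $\max_s\Gamma^i_\varepsilon(\gamma^i_\varepsilon(s))\to E_{m_i}$, whence $\limsup_{\varepsilon\to 0}C^i_\varepsilon\leq E_{m_i}$.

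For the matching lower bound, introduce $\rho_i=3E_{m_i}/m_i$: by Corollary~\ref{addprop} this is the $L^2$-mass of any ground state of~\eqref{eq:1.4}, and
\begin{equation*}
E_{m_i}=\min_{u\in\mathcal{M}_{\rho_i}}L_{m_i}(u),\qquad L_a(u)=\tfrac{1}{2}\int_{\R^3}|\nabla u|^2\,dx+\tfrac{a}{2}\int_{\R^3}|u|^2\,dx-\tfrac{1}{4}\D(u).
\end{equation*}
Fix now $\gamma\in\Phi^i_\varepsilon$. A direct change of variables gives $\|\gamma(T_i)\|_{L^2}^2=T_i^3\rho_i+o(1)$ as $\varepsilon\to 0$. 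Moreover $T_i>1$, since the requirement $\Gamma_\varepsilon(\mathcal{W}^i_{\varepsilon,T_i})<-2$ forces $T_i$ to lie strictly to the right of the unique positive critical point $t^\ast=1$ of the scalar profile $g(t)=t/6+t^3/2-t^5/3$ appearing at the end of the proof of Proposition~\ref{prop2}. Since $s\mapsto\|\gamma(s)\|_{L^2}^2$ is continuous with $\|\gamma(0)\|_{L^2}^2=0$ and $\|\gamma(T_i)\|_{L^2}^2>\rho_i$ for small $\varepsilon$, there exists $\bar s\in(0,T_i)$ with $\|\gamma(\bar s)\|_{L^2}^2=\rho_i$. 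Setting $u=\gamma(\bar s)$, one has $|u|\in\mathcal{M}_{\rho_i}$, hence $L_{m_i}(|u|)\geq E_{m_i}$.

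To transfer this to $\Gamma^i_\varepsilon(u)$, set $N=\int_{\R^3\setminus(O^i)_\varepsilon}|u|^2\,dx$. Combining the diamagnetic inequality~\eqref{eq:2.6}, the pointwise bound $V_\varepsilon\geq m_i$ on $(O^i)_\varepsilon$ from \textbf{(V2)}, the positivity of $V_\varepsilon$ elsewhere, and the identity $\D(u)=\D(|u|)$ one obtains
\begin{equation*}
\mathcal{F}_\varepsilon(u)\geq\tfrac{1}{2}\int_{\R^3}\bigl|\nabla|u|\bigr|^2\,dx+\tfrac{m_i}{2}(\rho_i-N)-\tfrac{1}{4}\D(|u|)=L_{m_i}(|u|)-\tfrac{m_i}{2}N.
\end{equation*}
The definition of $Q^i_\varepsilon$ yields $N\leq\varepsilon^{6/\mu}\bigl(1+Q^i_\varepsilon(u)^{2/5}\bigr)$ in all cases, and the elementary bound $q-aq^{2/5}\geq -Ca^{5/3}$ valid for all $q,a\geq 0$ then gives $Q^i_\varepsilon(u)-\frac{m_i}{2}N\geq -C(\varepsilon^{6/\mu}+\varepsilon^{10/\mu})=o(1)$ uniformly in $u$. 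Thus $\Gamma^i_\varepsilon(u)\geq E_{m_i}-o(1)$, and taking the maximum over $s\in[0,T_i]$ and the infimum over $\gamma\in\Phi^i_\varepsilon$ delivers $C^i_\varepsilon\geq E_{m_i}-o(1)$.

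I expect the main obstacle to be the exact calibration between the $L^2$-mass leakage $N$ outside $(O^i)_\varepsilon$ and the penalty $Q^i_\varepsilon$: the exponent $6/\mu$ in $\chi^i_\varepsilon$ and the power $5/2$ defining $Q^i_\varepsilon$ must together ensure that the cancellation $q-aq^{2/5}$ leaves a remainder vanishing as $\varepsilon\to 0$. Once this algebraic check is secured, the remaining steps---crossing the sphere $\mathcal{M}_{\rho_i}$, the diamagnetic inequality, and the structural bound $V_\varepsilon\geq m_i$ on $(O^i)_\varepsilon$---fit together mechanically without requiring any new idea beyond those already assembled in Sections~2 and~3.
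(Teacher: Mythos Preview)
Your argument is correct. The paper itself does not give a proof here but refers to Proposition~3.2 of~\cite{cjs}, so let me compare the two strategies.

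The argument in~\cite{cjs} is written for local nonlinearities of the form $f(|u|^2)u$; the matching lower bound is obtained there by localising an arbitrary path to $(O^i)^{2\delta}_\varepsilon$ via a cutoff and then comparing the resulting functional to the mountain-pass level of the autonomous problem $-\Delta u+m_i u=f(|u|^2)u$. Your route is different: you exploit the $L^2$-constrained characterisation of Hartree ground states in Corollary~\ref{addprop}, which is special to the homogeneity degree $-1$ of $W$ and has no counterpart in~\cite{cjs}. The crossing argument on the sphere $\mathcal M_{\rho_i}$ replaces the Nehari/mountain-pass comparison, and the penalty--leakage balance $q-aq^{2/5}\geq -Ca^{5/3}$ with $a=\tfrac{m_i}{2}\varepsilon^{6/\mu}$ handles the mass outside $(O^i)_\varepsilon$ directly, without any cutoff or decomposition of the path. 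This is shorter and more transparent in the present setting, at the price of relying on the structural identity $\|U_i\|_{L^2}^2=3E_{m_i}/m_i$, which would fail for the nonlinearities treated in~\cite{cjs}.

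Two small points worth recording. First, in checking $T_i>1$: the scalar profile is $g(t)=\tfrac{t}{6}+\tfrac{t^3}{2}-\tfrac{t^5}{3}$ (equivalently $-\tfrac{2t^5}{6}$ as in Proposition~\ref{prop2}); the requirement $g(T_i)\cdot 3E_{m_i}<-2$ indeed forces $T_i>1$, so $\|\gamma(T_i)\|_{L^2}^2=T_i^3\rho_i+o(1)>\rho_i$ for $\varepsilon$ small, and the crossing point $\bar s$ exists. Second, the $o(1)$ in your final bound depends only on $\varepsilon$ (through $a$) and not on $\gamma$, since the endpoints are fixed; this is what allows you to pass to the infimum over $\Phi^i_\varepsilon$.
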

\begin{proof}
The proof of this lemma is  analogous to that of Proposition 3.2 in
\cite{cjs}.
\end{proof}

Next we define, for every $\alpha \in \mathbb{R}$, the sub-level
\[
\Gamma_\ge^\alpha = \{ u \in H_\ge :\,  \Gamma_\ge(u)
\leq \alpha\}.
\]
\begin{proposition}\label{prop40}
Let $(\ge_j)$ be such that $\lim_{j \to \infty}\ge_j = 0$ and
$(u_{\ge_j}) \in X_{\ge_j}^{d}$ such that
\begin{equation}\label{ass}
\lim_{j \to \infty}\Gamma_{\ge_j}(u_{\ge_j}) \le E \mbox{ and }
\lim_{j \to \infty}\Gamma_{\ge_j}^\prime(u_{\ge_j}) = 0.
\end{equation}
Then, for sufficiently small $d
> 0,$ there exist, up to a subsequence, $(y^i_j ) \subset \R^3$, $i
=1,\dots,k$, points $x^i\in \M^i$ (not to be confused with
the points $x_i$ already introduced), $U_i \in S_{m_i}$ such that
\begin{equation}\label{ass1}
\lim_{j \to \infty} |\ge_j y^i_j - x^i| = 0  \text{ and } \lim_{j \to
\infty} \left\Vert u_{\varepsilon_j} - \sum_{i=1}^k e^{iA_\ge(y^i_j)(\cdot
-y_j^i)}\p_{\ge_j}(\cdot - y^i_j) U_i(\cdot - y^i_j) \right\Vert_{\ge_j} = 0.
\end{equation}
\end{proposition}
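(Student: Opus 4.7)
The plan is to unwind the definition of $X_{\varepsilon_j}^d$ to obtain a rough initial decomposition, and then refine it using the Palais--Smale assumption together with the compactness results from Section 2. By definition of $X_{\varepsilon_j}$, there exist $\tilde x^i_j\in(\M^i)^\beta$ and $\tilde U^j_i\in{\mathcal S}_{m_i}$ such that, setting $y^i_j:=\tilde x^i_j/\varepsilon_j$,
\[
\Bigl\|u_{\varepsilon_j}-\sum_{i=1}^k e^{\iu A(\tilde x^i_j)(\cdot-y^i_j)}\p_{\varepsilon_j}(\cdot-y^i_j)\,\tilde U^j_i(\cdot-y^i_j)\Bigr\|_{\varepsilon_j}\le d.
\]
By compactness of ${\mathcal S}_{m_i}$ in $H^1(\R^3,\R)$ (Proposition \ref{prop:compact}), up to a subsequence $\tilde U^j_i\to U_i$ strongly in $H^1$ with $U_i\in{\mathcal S}_{m_i}$; by boundedness of $(\M^i)^\beta$, we may further extract so that $\tilde x^i_j\to x^i\in\overline{(\M^i)^\beta}\subset\overline{O^i}$. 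It remains to show $x^i\in\M^i$, identify $U_i$ as the limiting profile, and strengthen the closeness from $d$ to $o(1)$.

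I next perform a gauge-translation, defining $v^i_j(y):=e^{-\iu A(\tilde x^i_j)\cdot y}u_{\varepsilon_j}(y+y^i_j)$. From \eqref{ass}, Lemma \ref{equi-norm} and the diamagnetic inequality \eqref{eq:2.6}, $(v^i_j)$ is bounded in $H^1(\R^3,\C)$, so up to a subsequence $v^i_j\rightharpoonup V_i$ weakly in $H^1$ and pointwise a.e. Testing $\Gamma_{\varepsilon_j}'(u_{\varepsilon_j})\to 0$ against functions of the form $e^{\iu A(\tilde x^i_j)(\cdot-y^i_j)}\psi(\cdot-y^i_j)$ with $\psi\in C_c^\infty(\R^3,\C)$, using continuity of $V$ at $x^i$ and the fact that the support of such test functions eventually lies inside $O_{\varepsilon_j}$ (so $Q'_{\varepsilon_j}$ contributes nothing), one obtains that $V_i$ is a weak solution of
\[
-\Delta V_i+V(x^i)V_i=(W*|V_i|^2)V_i\quad\text{in }\R^3.
\]
Nontriviality of $V_i$ follows, for $d$ small, from the strong $H^1$ convergence $\tilde U^j_i\to U_i$ and the $d$-closeness displayed above, since $U_i$ is a nonzero ground state.

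The heart of the argument is the energy accounting. Because the sets $O^i$ are pairwise separated by $10\delta>0$, for $i\ne i'$ one has $|\tilde x^i_j-\tilde x^{i'}_j|\ge\delta$, hence $|y^i_j-y^{i'}_j|\ge\delta/\varepsilon_j\to\infty$. A localization argument, combined with $W(x)\le C|x|^{-1}$ and Lemma \ref{lemma:2.12}, shows that the Hartree cross-terms between bumps centered at different $y^i_j$ vanish in the limit; a Brezis--Lieb type splitting then gives
\[
\liminf_{j\to\infty}\Gamma_{\varepsilon_j}(u_{\varepsilon_j})\ge\sum_{i=1}^k L^c_{V(x^i)}(V_i)\ge\sum_{i=1}^k E_{V(x^i)}.
\]
The scaling computation behind Lemma \ref{equiv-min} gives $E_a=(a/\gamma)^{3/2}E_\gamma$, which is strictly increasing in $a$. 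Together with $V(x^i)\ge m_i$ (since $V(x^i)\in\overline{O^i}$ and assumption $\textbf{(V2)}$ locates the infimum strictly inside $O^i$) and the hypothesis $\limsup_j\Gamma_{\varepsilon_j}(u_{\varepsilon_j})\le E=\sum_i E_{m_i}$, all of the above inequalities must be equalities. This forces $V(x^i)=m_i$ (hence $x^i\in\M^i$) and that each $V_i$ is a complex least-energy solution of \eqref{eq:limiting} with $a=m_i$. By Corollary \ref{levels}, $V_i=e^{\iu\tau_i}U_i^\infty$ with $U_i^\infty\in{\mathcal S}_{m_i}$ real; moreover, norm equality in the Brezis--Lieb splitting promotes the weak convergence of $v^i_j$ to strong $H^1$ convergence. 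Reabsorbing the phases $e^{\iu\tau_i}$ into the exponential prefactor produces \eqref{ass1}.

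The main obstacle is the decoupling step: showing rigorously that the non-local Hartree nonlinearity splits cleanly along the different bumps. This rests delicately on assumption $\textbf{(W)}$ (via $W\le C|\cdot|^{-1}$), on Lemma \ref{lemma:2.12} and Lemma \ref{finitEn}, and most crucially on the uniform exponential decay of ground states from Proposition \ref{prop:compact}, which controls the tails of the profiles outside neighborhoods of $y^i_j$ and guarantees that the convolution cross-terms are negligible in the limit.
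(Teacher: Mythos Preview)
Your overall strategy---extract weak profile limits, bound the energy from below by $\sum_i E_{V(x^i)}$, force equality via the upper bound $E$, and then upgrade to strong convergence---is exactly the paper's outline. The gap is that the central inequality
\[
\liminf_{j}\Gamma_{\varepsilon_j}(u_{\varepsilon_j})\ \ge\ \sum_{i=1}^k L^c_{V(x^i)}(V_i)
\]
is asserted via a ``Brezis--Lieb type splitting'' but never justified, and for the Hartree term this is the whole difficulty. Since $\D$ enters with a minus sign, you would need $\limsup_j \D(u_{\varepsilon_j})\le\sum_i\D(V_i)$, and weak convergence does not give this: any mass of $u_{\varepsilon_j}$ not captured by the weak limits $V_i$ contributes \emph{positively} to $\D$. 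Your closing paragraph appeals to the exponential decay of ground states, but that controls only the approximating sum $\sum e^{\iu A}\varphi_{\varepsilon_j}\tilde U_i^j$, not $u_{\varepsilon_j}$ itself; the difference is merely $O(d)$ in $H_{\varepsilon_j}$, and it is precisely this remainder whose location and Hartree interaction must be analysed.

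The paper fills this gap with two ingredients you omit. First, an explicit cutoff decomposition $u_\varepsilon=u_{1,\varepsilon}+u_{2,\varepsilon}$ (with $u_{1,\varepsilon}=\sum_i\varphi_\varepsilon(\cdot-x^i_\varepsilon/\varepsilon)u_\varepsilon$), for which one proves $\Gamma_\varepsilon(u_\varepsilon)\ge\Gamma_\varepsilon(u_{1,\varepsilon})+\Gamma_\varepsilon(u_{2,\varepsilon})+o(1)$ and then $\Gamma_\varepsilon(u_{2,\varepsilon})\ge o(1)$ by the smallness of $d$. Second---and this is the step entirely missing from your sketch---Lions' vanishing lemma is applied twice: once to exclude mass accumulating in the annuli $B(x^i_\varepsilon/\varepsilon,2\beta/\varepsilon)\setminus B(x^i_\varepsilon/\varepsilon,\beta/\varepsilon)$ (any such mass would produce, via $\Gamma'_\varepsilon\to0$, a nontrivial solution of a limiting equation with energy $\ge\frac32 E_m$, contradicting the $d$-closeness to $X_\varepsilon$ for $d$ small), and once more inside each bump region to exclude mass drifting off to infinity. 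Only after these dichotomy arguments does one obtain $\D(\mathcal W^i_\varepsilon)\to\D(\mathcal W^i)$, which is what makes the energy accounting and the subsequent strong-convergence upgrade (via the diamagnetic inequality and $V\ge m_i$ on $O^i$) go through.
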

\begin{proof} For simplicity we write $\ge$
instead of $\ge_j.$ From Proposition \ref{prop:compact}, we know that the
$S_{m_i}$ are compact. Then there exist $Z_i \in S_{m_i}$ and
$(x_\varepsilon^i) \subset (\M^i)^\beta$, $x^i \in (\M^i)^\beta$
for $i =1,\ldots,k$, with $x^i_\varepsilon \to x^i$ as
$\varepsilon \to 0$ such that, passing to a subsequence still
denoted $(u_\ge)$,
\begin{equation}
\label{401} \left\| u_\ge - \sum_{i=1}^k e^{iA(x_\varepsilon^i)(\cdot-
\frac{x^i_\ge}{\ge})}\p_{\ge}(\cdot - x^i_\ge/\ge)Z_i(\cdot - x^i_\ge/\ge)
\right\|_\ge \le 2d
\end{equation}
for small $\ge > 0.$ We set $u_{1,\ge} = \sum_{i=1}^k \p_\ge(\cdot -
x_\ge^i/\ge)u_\ge$ and $u_{2,\ge} = u_\ge -u_{1,\ge}$. As a first step in
the proof of the Proposition we shall prove that
\begin{equation}
\label{402}\Gamma_\ge(u_\ge) \geq \Gamma_\ge(u_{1,\ge}) +
\Gamma_\ge(u_{2,\ge}) + O(\ge).
\end{equation}
Suppose there exist $y_\ge \in \bigcup_{i=1}^{k}
B(x^i_\ge/\ge,2\beta/\ge) \setminus B(x^i_\ge/\ge,\beta/\ge)$ and $R>0$
satisfying
\[
\liminf_{\ge \to 0}\int_{B(y_\ge,R)}|u_\ge|^2 dy
> 0
\]
which means that
\begin{equation}\label{4022}
\liminf_{\ge \to 0}\int_{B(0,R)}|v_\ge|^2 dy
> 0
\end{equation}
where $v_\ge(y) = u_\ge(y+y_\ge)$. Taking a subsequence, we can assume
that $\ge y_\ge \to x_0 $ with $x_0$ in the closure of
$\bigcup_{i=1}^k B(x^i,2\beta) \backslash B(x^i,\beta)$. Since
(\ref{401}) holds, $(v_\ge)$ is bounded in $H_\ge$. Thus, since
$\tilde{m} >0,$ $(v_\ge)$ is bounded in $L^2(\R^3,\C)$ and using the
Diamagnetic inequality and the Hardy-Sobolev inequality (see also the proof of Proposition \ref{prop:compact}) we deduce that $(v_\ge)$ is
bounded in $L^{m}(\R^3, \C)$ for any $m<6$. In particular, up to a subsequence,
$v_\ge \to \mathcal{W} \in L^{m}(\R^3, \C)$ weakly. Also by Corollary
\ref{equiv-norm-consequence} i), for any compact $K \subset \R^3$,
$(v_\ge)$ is bounded in $H^1(K, \C)$. Thus we can assume that $v_\ge
\to \mathcal{W}$ in $H^1(K, \C)$ weakly for any $K \subset \R^3$ compact,
strongly in $L^{m}(K,\mathbb{C})$. Because of (\ref{4022}) $\mathcal{W}$ is
not the zero function. Now, since $\lim_{\ge \to 0}\Gamma'_\ge(u_{\ge})
=0,$ $\mathcal{W}$ is a non-trivial solution of
\begin{equation}\label{3.100}
- \Delta  \mathcal{W}  - \frac{2}{\iu} A(x_0) \cdot \nabla \mathcal{W} + |A(x_0)|^2 \mathcal{W} +
V(x_0) \mathcal{W} = \left(W*|\mathcal{W}|^2 \right) \mathcal{W}.
\end{equation}
From \eqref{3.100} and since $\mathcal{W} \in L^{m}(\R^3, \C)$ we readily
deduce, using Corollary \ref{equiv-norm-consequence} ii) that $\mathcal{W} \in
H^1(\R^3, \C).$ \medskip

Let $ \omega(y)= e^{- i A(x_0) y} \mathcal{W}(y)$. Then $\omega$ is a non
trivial solution of the  complex-valued equation
\[
-\Delta  \omega + V(x_0) \omega = (W*|\omega|^2) \omega.
\]
For $R>0$ large we have
\begin{equation}\label{happy}
\int_{B(0, R)} \left| \left( \frac{\nabla}{\iu} - A(x_0) \right) \mathcal{W}
\right|^2 dy \geq \frac{1}{2}\int_{\R^3} \left|
\left(\frac{\nabla}{\iu} - A(x_0)\right) \mathcal{W} \right|^2 dy
\end{equation}
and thus, by the weak convergence,
\begin{align}\label{nnn}
\liminf_{\ge \to 0} \int_{B(y_\ge, R)} | D^\ge u_\ge|^2 dy &=
\liminf_{\ge \to 0} \int_{B(0, R)} \left| \left( \frac{\nabla}{\iu} -
A_\ge(y+ y_\ge) \right)
v_\ge \right|^2 dy \notag \\
&\geq \int_{B(0, R)} \left| \left( \frac{\nabla}{\iu} - A(x_0)
\right) \mathcal{W}
\right|^2 dy \notag \\
&\geq \frac{1}{2} \int_{\R^3} \left| \left(\frac{\nabla}{\iu} -
A(x_0)\right) \mathcal{W} \right|^2 dy = \frac{1}{2} \int_{\R^3} |\nabla
\omega |^2 dy.
\end{align}
It follows from Lemma \ref{equiv-min} that $E_a > E_b$ if $a >b$ and using
Lemma \ref{levels} we have $L^c_{V(x_0)} (\omega) \geq E^c_{V(x_0)}
= E_{V(x_0)} \geq E_m$ since $V(x_0) \geq m$. Thus from (\ref{nnn})
and Lemma \ref{equiv-min} we get that
\begin{equation} \label{eq:no}
\liminf_{\ge \to 0} \int_{B(y_\ge, R)} | D^\ge u_\ge|^2 dy \geq
\frac{3}{2} L^c_{V(x_0)} (\omega) \geq \frac{3}{2}E_m>0 .
\end{equation}
which contradicts (\ref{401}), provided $d>0$ is small enough.
Indeed, $x_0 \neq x^i$, $\forall i \in \{1,\ldots,k\}$ and the $Z_i$
are exponentially decreasing.
\medskip

Since such a sequence $(y_{\varepsilon})$ does not exist, we deduce
from \cite[Lemma I.1]{L} that
\begin{equation}
\label{404} \limsup_{\ge \to 0}\int_{ \bigcup_{i=1}^k B(x_\ge^i/\ge,
2\beta/\ge) \setminus B(x^i_\ge/\ge, \beta/\ge)}|u_\ge|^{5} dy = 0.
\end{equation}
As a consequence, we can derive using the boundedness of $(\|u_\ge\|_2)$ that
\begin{multline*}
\lim_{\ge \to 0} \left\{ \int_{\R^3 \times \R^3} W(x-y)|u_\ge(x)|^2 |u_\ge (y)|^2\, dx\, dy
- \int_{\R^3 \times \R^3} W(x-y)|u_{1,\ge}(x)|^2 |u_{1,\ge} (y)|^2\, dx\, dy \right. \\
\left. -\int_{\R^3 \times \R^3} W(x-y)|u_{2,\ge}(x)|^2 |u_{2,\ge} (y)|^2\, dx\, dy \right\}
=0.
\end{multline*}
At this point writing
\begin{align*}
\Gamma_\ge(u_\ge) &= \Gamma_\ge(u_{1,\ge}) + \Gamma_\ge(u_{2,\ge}) \\
&+\sum_{i=1}^k \int\limits_{B(x^i_\ge /\ge, 2\beta/\ge) \setminus B(x^i_\ge /\ge,
\beta/\ge)}
 \p_\ge (y-x^i_\ge /\ge)(1- \p_\ge (y-x^i/\ge))|D^{\ge} u_\ge|^2 \\
 & \quad {}+ V_\ge \p_\ge (y-x^i_\ge/\ge) (1- \p_\ge (y-x^i_\ge /\ge))|u_\ge|^2 dy \\
& {}- \frac{1}{4}  \int_{\R^3 \times \R^3} W(x-y)|u_\ge(x)|^2 |u_\ge (y)|^2\, dx\, dy  \\
 & {}-   \frac{1}{4} \int_{\R^3 \times \R^3} W(x-y)|u_{1,\ge}(x)|^2
|u_{1,\ge} (y)|^2\, dx\, dy \\
&{}-  \frac{1}{4} \int_{\R^3 \times \R^3}
W(x-y)|u_{2,\ge}(x)|^2 |u_{2,\ge} (y)|^2\, dx\, dy
+ o(1),
\end{align*}
as $\ge \to 0$ this shows that the inequality (\ref{402}) holds. We
now estimate $\Gamma_\ge(u_{2,\ge})$. We have
\begin{align} \label{405}
  \Gamma_\ge(u_{2, \ge})
&\geq{\mathcal F}_\ge(u_{2, \ge}) \\
&=  \frac{1}{2}
\int_{\R^3}|D^{\ge} u_{2, \ge}|^2 + \tilde{V}_\ge |u_{2, \ge}|^2 dy -
\frac{1}{2}\int_{\R^3}(\tilde{V}_\ge - V_\ge) |u_{2, \ge}|^2dy \nonumber\\
 &\quad{}- \frac{1}{4} \int_{\R^3 \times \R^3 }W(x-y) |u_{2, \ge}(x)|^2 |u_{2, \ge} (y)|^2 \, dx\,
dy  \nonumber \\
&\geq \frac{1}{2}\Vert u_{2, \ge}\Vert_\ge^2 -
\frac{\tilde{m}}{2}\int_{\R^3 \setminus O_\ge^i}|u_{2, \ge}|^2 dy \nonumber \\
& \quad {}-
\frac{1}{4} \int_{\R^3 \times \R^3 }W(x-y) |u_{2, \ge}(x)|^2 |u_{2, \ge} (y)|^2 \, dx\,
dy.   \nonumber
\end{align}
Here we have used the fact that $\tilde V_\ge-V_\ge=0$ on $O_\ge^i$ and
$|\tilde V_\ge-V_\ge|\leq \widetilde m$ on $\R^3\setminus O_\ge^i$.
For some $C>0$,
\begin{equation*}
\int_{\R^3 \times \R^3 }W(x-y) |u_{2, \ge}(x)|^2 |u_{2, \ge} (y)|^2 \, dx\,
dy \leq C \|u_{2,\ge}\|_{L^2}^{3} \, \|u_{2,\ge}\|_{H^1}.
\end{equation*}
Since $(u_\ge)$ is bounded, we see from (\ref{401}) that $\Vert u_{2,
\ge} \Vert_\ge \le 4d$ for small $\ge
>0$. Thus taking $d>0$ small enough we have
\begin{equation}
\label{406} \frac{1}{2}\|u_{2, \ge}\|_\ge^2 - \frac{1}{4} \int_{\R^3 \times \R^3 }W(x-y) |u_{2, \ge}(x)|^2 |u_{2, \ge} (y)|^2 \, dx\,
dy  \geq \frac{1}{8} \|u_{2, \ge}\|_\ge^2.
\end{equation}
Now note that ${\mathcal F}_\ge$ is uniformly bounded in $X_\ge^d$
for small $\ge
> 0$, and such is $Q_\ge.$ This implies that for some $C > 0,$
\begin{equation} \label{407}
\int_{\RN \setminus O_\ge} |u_{2, \ge}|^2 dy \le C\ge^{6 / \mu}
\end{equation}
and from (\ref{405})-(\ref{407}) we deduce that $\Gamma_\ge(u_{2,
\ge}) \geq o(1).$
\medskip

Now for $i = 1,\ldots,k,$ we define $u_{1, \ge}^i(y) = u_{1,\ge}(y)$
for $y \in O_\ge^i, $ $u_{1, \ge}^i(y) = 0$ for $y \notin O_\ge^i$.
Also we set $\mathcal{W}_\ge^i (y) = u_{1, \ge}^i(y + x^i_\ge
/\ge).$ We fix an arbitrary $i \in \{1, \ldots, k\}$. Arguing as
before, we can assume, up to a subsequence, that
$\mathcal{W}_{\ge}^i$ converges weakly in $L^{m}(\R^3, \C)$, $m <
6$, to a solution $\mathcal{W}^i \in H^1(\R^3, \C)$ of
\[
- \Delta  \mathcal{W}^i - \frac{2}{\iu} A(x^i) \cdot \nabla \mathcal{W}^i + |A(x^i)|^2 \mathcal{W}^i
+ V(x^i) \mathcal{W}^i = \left( W * \mathcal{W}^i \right) \mathcal{W}^i, \quad y\in \R^3.
\]
We shall prove that $\mathcal{W}_\ge^i$ tends to $\mathcal{W}^i$ strongly in $H_\ge$.
Suppose there exist $R>0$ and a sequence $(z_\ge)$ with $z_\ge \in
B(x^i_\ge /\ge, 2 \beta /\ge)$ satisfying
\[
\liminf_{\ge \to 0} |z_\ge - {\ge}^{-1}x^i_\ge| = \infty \quad \hbox{and}
\quad \liminf_{\ge \to 0} \int_{B(z_\ge , R)} |u^{1,i}_\ge|^2 \, dy
>0.
\]
We may assume that $\ge z_\ge \to z^i \in O^i$ as $\ge \to 0$. Then
$\tilde{\mathcal{W}}_\ge^i(y)= \mathcal{W}_\ge^i (y + z_\ge)$ weakly converges in
$L^{m} (\R^3, \C)$ (for any $m<6$) to $\tilde{\mathcal{W}}^i \in H^1(\R^3, \C)$ which satisfies
\[
- \Delta  \tilde{\mathcal{W}}^i - \frac{2}{\iu} A(z^i) \cdot \nabla \tilde{\mathcal{W}} +
|A(z^i)|^2 \tilde{\mathcal{W}}^i + V(z^i) \tilde{\mathcal{W}}^i = \left( W * \tilde{\mathcal{W}}^i \right) \tilde{\mathcal{W}}^i, \quad y \in \R^3
\]
and as before we get a contradiction. Then using \cite[Lemma I.1]{L}  it follows that
\begin{multline}\label{3.101}
\lim_{\varepsilon \to 0} \int_{\R^3 \times \R^3} W(x-y) |\mathcal{W}_\ge^i(x)|^2 |\mathcal{W}_\ge^i(y)|^2 \, dx\, dy \\
= \int_{\R^3 \times \R^3} W(x-y) |\mathcal{W}^i(x)|^2 |\mathcal{W}^i(y)|^2 \, dx\, dy .
\end{multline}
Then from the weak convergence of $\mathcal{W}_\ge^i$ to $\mathcal{W}^i \neq 0$ in
$H^1(K, \C)$ for any $K \subset \R^3$ compact we get, for any $i \in
\{1, \ldots, k \}$,
\begin{align}
\limsup_{\ge \to 0}\Gamma_\ge(u_{1, \ge}^i)
 &\geq \liminf_{\ge \to 0} {\mathcal F}_\ge(u_{1, \ge}^i)  \nonumber \\
&\geq \liminf_{\ge \to 0}  \frac{1}{2} \int_{B(0, R)}
\bigg|\left(\frac{\nabla}{\iu} - A(\ge y + x^i_\ge)\right)
\mathcal{W}_{\ge}^i\bigg|^2
\notag \\
&{}+ V(\ge y + x^i_\ge )|\mathcal{W}^i_{\ge}|^2 dy -\hfill
\int_{\R^3 \times \R^3} W(x-y) |\mathcal{W}_\ge^i(x)|^2 |\mathcal{W}_\ge^i(y)|^2 \, dx\, dy  \nonumber \\
& \geq  \frac{1}{2} \int_{B(0, R)} \bigg|\left(\frac{\nabla}{\iu} -
A(x^i)\right) \mathcal{W}^i\bigg|^2 +V(x^i)|\mathcal{W}^i|^2 dy \notag \\
& \hfill {}-  \frac{1}{4} \int_{\R^3 \times \R^3} W(x-y) |\mathcal{W}^i(x)|^2 |\mathcal{W}^i(y)|^2 \, dx\, dy. \label{411}
\end{align}
 Since these inequalities hold for any $R >0$ we
deduce, using Lemma \ref{levels}, that
\begin{align}\label{200}
\limsup_{\ge \to 0} \Gamma_\ge (u_{1, \ge}^i) & \geq  \frac{1}{2}
\int_{\R^3} \left| \left(\frac{\nabla}{\iu} - A(x^i)\right)
\mathcal{W}^i \right|^2 dy + \frac{1}{2}\int_{\R^3} V(x^i) |\mathcal{W}^i|^2 dy \notag \\
& {} - \int_{\R^3 \times \R^3} W(x-y) |\mathcal{W}^i(x)|^2 |\mathcal{W}^i(y)|^2 \, dx\, dy \nonumber \\
& =  \frac{1}{2} \int_{\R^3} |\nabla  \omega^i |^2  + V(x^i)
|\omega^i|^2 dy  \nonumber \\
& {} - \frac{1}{4} \int_{\R^3 \times \R^3} W(x-y) |\omega^i(x)|^2 |\omega^i(y)|^2 \, dxdy
 =  L^c_{V(x^i)}(\omega^i) \geq E_{m_i}^c=E_{m_i}
\end{align}
where we have set $\omega^i(y) = e^{-i A(x^i)y} \mathcal{W}^i(y)$. Now by
(\ref{402}),
\begin{align} \label{412}
\limsup_{\ge \to 0} \Big ( \Gamma_\ge (u_{2,\ge}) + \sum_{i =1}^k
\Gamma_\ge (u_{1,\ge}^i) \Big ) &=  \limsup_{\ge \to 0} \Big ( \Gamma_\ge
(u_{2,\ge}) +
 \Gamma_\ge (u_{1,\ge}) \Big ) \\
&\leq  \limsup_{\ge \to 0}\Gamma_\ge (u_\ge) \leq E = \sum_{i=1}^k
E_{m_i}. \nonumber
\end{align}
Thus, since $\Gamma_\ge (u_{2, \ge}) \geq o(1)$ we deduce from
(\ref{200})-(\ref{412}) that, for any $ i \in \{1, \ldots k \}$
\begin{equation}
\label{413} \lim_{\ge \to 0} \Gamma_\ge (u_{1, \ge}^i) = E_{m_i}.
\end{equation}
Now (\ref{200}), (\ref{413}) implies that $L_{V(x^i)}(\omega^i) =
E_{m_i}$. Recalling from  \cite{JT2} that $E_a >E_b$ if $a >b$ and
using Lemma \ref{levels} we conclude that $x^i \in \M^i$. At this
point it is clear that $W^i(y) = e^{i A(x^i)y}U_i(y - z_i)$ with
$U_i \in S_{m_i}$ and $z_i \in \R^3.$

To establish that $W_\ge^i \to W^i$ strongly in $H_\ge$ we first show
that $W_\ge^i \to W^i$ strongly in $L^2(\R^3, \C)$. Since $(W_\ge^i)$
is bounded in $H_\ge$ the Diamagnetic inequality (\ref{eq:2.6})
immediately yields that $(|W_\ge^i|)$ is bounded in $H^1(\R^3, \R)$
and we can assume that $|W_\ge^i| \to |W^i| = |\omega^i|$ weakly in
$H^1(\R^3, \R)$.
Now since $L_{V(x^i)}(\omega^i) = E_{m_i}$, we get
using the Diamagnetic inequality, (\ref{3.101}), (\ref{413}) and the
fact that $V \geq V(x^i)$ on $O^i$,
\begin{align}\label{sopxxxx}
\int_{\R^3} | \nabla \omega^i |^2 dy &+ \int_{\R^3} m_i
|\omega^i|^2 dy -2 \int_{\R^3 \times \R^3} W(x-y) |\omega^i(x)|^2 |\omega^i(y)|^2\, dx dy  \notag \\
&\geq
\limsup_{\ge \to 0}  \int_{\R^3} \left| \left(\frac{\nabla}{\iu} -
A(\ge y + x^i_\ge) \right)
\mathcal{W}_\ge^i \right|^2 dy + \int_{\R^3} V(\ge y + x^i_\ge) |\mathcal{W}_\ge^i|^2  dy \notag \\
& \qquad {}- 2 \int_{\R^3 \times \R^3} W(x-y) |\mathcal{W}_\ge^i(x)|^2 |\mathcal{W}_\ge^i(y)|^2\, dx dy
 \notag \\
&\geq \limsup_{\ge \to 0}  \int_{\R^3} \big| \nabla | \mathcal{W}_\ge^i|
\big|^2 dy + \int_{\R^3} V(x^i) |\mathcal{W}_\ge^i|^2dy \nonumber \\
& {} - 2 \int_{\R^3 \times \R^3} W(x-y) |\mathcal{W}_\ge^i(x)|^2 |\mathcal{W}_\ge^i(y)|^2\, dx dy
 \notag \\
&\geq  \int_{\R^3} \big| \nabla |\omega^i| \big|^2 dy + \int_{\R^3}
m_i |\omega^i|^2 dy \nonumber \\
&{}- 2 \int_{\R^3 \times \R^3} W(x-y) |\omega^i(x)|^2 |\omega^i(y)|^2\, dx dy .
\end{align}
But from Lemma \ref{levels} we know that, since
$L_{V(x^i)}(\omega^i) = E_{m_i}$,
\[
\int_{\R^3} \big| \nabla |\omega^i| \big|^2 dy = \int_{\R^3} \big| \nabla \omega^i \big|^2
dy.
\]
Thus we deduce from \eqref{sopxxxx} that
\begin{equation}\label{1000}
\int_{\R^3}V(\ge y + x^i_\ge) |\mathcal{W}_\ge^i|^2 dy \to \int_{\R^3} V(x^i)
|\mathcal{W}^i|^2 dy.
\end{equation}
Thus, since $V \geq V(x^i)$ on $O^i$, we deduce that
\begin{equation} \label{strongly}
\text{$\mathcal{W}_\ge^i \to \mathcal{W}^i$  strongly in $L^2(\R^3, \C)$}.
\end{equation}
From \eqref{strongly} we easily get that
\begin{equation} \label{strongly0}
\lim_{\ge \to 0}   \int_{\R^3} \left| \left(\frac{\nabla}{\iu} -
A(\ge y + x^i_\ge)\right) \mathcal{W}_\ge^i \right|^2 - \left|
\left(\frac{\nabla}{\iu} - A( x^i)\right) \mathcal{W}_\ge^i \right|^2 dy = 0.
\end{equation}
Now, using (\ref{3.101}), (\ref{sopxxxx}) and (\ref{1000}), we see
from (\ref{strongly0}) that
\begin{multline}\label{sopxxx}
\int_{\R^3} \left| \left(\frac{\nabla}{\iu} - A(x^i)\right)
\mathcal{W}^i \right|^2 dy + \int_{\R^3} V(x^i) |\mathcal{W}^i|^2 dy  \\
\geq \limsup_{\ge \to 0}  \int_{\R^3} \left|
\left(\frac{\nabla}{\iu} - A(\ge y  + x^i_\ge)\right)
\mathcal{W}_\ge^i \right|^2 dy + \int_{\R^3} V(\ge y + x^i_\ge) |\mathcal{W}_\ge^i|^2  dy  \\
\geq \limsup_{\ge \to 0}  \int_{\R^3} \left| \left(\frac{\nabla}{\iu}
- A( x^i)\right) \mathcal{W}_\ge^i \right|^2 dy + \int_{\R^3} V(x^i) |\mathcal{W}_\ge^i|^2
\, dy.
\end{multline}
At this point and using Corollary \ref{equiv-norm-consequence} ii)
we have established the strong convergence $W_\ge^i \to W^i$ in
$H^1(\R^3, \C)$. Thus we have
\[
u_{1, \ge}^i = e^{iA(x^i)(\cdot - x^i_\ge / \ge) } U_i(\cdot - x^i_\ge /
\ge - z_i) + o(1)
\]
strongly in $H^1(\R^3, \C)$. Now setting $y_\ge^i = x^i_\ge/\ge +z_i$
and changing $U_i$ to $e^{i A(x^i) z_i}U_i$ we get that
\[
u_{1, \ge}^i = e^{i A(x^i) (\cdot - y_\ge^i)} U_i(\cdot - y_\ge^i) +
o(1)
\]
strongly in $H^1(\R^3, \C)$. Finally using the exponential decay of
$U_i$ and $\nabla U_i$ we have
\[
u_{1, \ge}^i = e^{i A_\ge(y_\ge^i) (\cdot - y_\ge^i)} \p_{\ge}(\cdot -
y_\ge^i)U_i(\cdot - y_\ge^i) + o(1).
\]
From Corollary~\ref{equiv-norm-consequence} iii) we deduce that this
convergence also holds in $H_\ge$ and thus
\[
u_{1,\ge} = \sum_{i=1}^k u_{1, \ge}^i = \sum_{i=1}^k  e^{i
A_\ge(y_\ge
^i) (\cdot - y_\ge^i)} \p_{\ge}(\cdot - y_\ge^i)U_i(\cdot -
y_\ge^i) + o(1)
\]
strongly in $H_\ge$. To conclude the proof of the Proposition, it
suffices to show that $u_{2, \ge} \to 0$ in $H_\ge$. Since $E \geq
\lim_{\ge \to 0} \Gamma_\ge(u_\ge)$ and $\lim_{\ge \to 0}
\Gamma_\ge(u_{1, \ge})= E$ we deduce, using (\ref{402}) that $\lim_{\ge
\to 0} \Gamma_\ge(u_{2,\ge})= 0$. Now from (\ref{405})-(\ref{407}) we
get that $u_{2, \ge} \to 0$ in $H_\ge$.
\end{proof}

\subsection{Critical points of the penalized functional}

We first state the following

\begin{proposition}\label{prop4}
For sufficiently small $d> 0,$ there exist constants $\omega > 0$
and $\e_0
> 0$ such that $|\Gamma_\e^\prime(u)| \geq \omega$ for $u \in
\Gamma^{D_{\e}}_\e \cap (X_\e^{d} \setminus X_\e^{d/2})$ and $\e \in
(0,\e_0).$
\end{proposition}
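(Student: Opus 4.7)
The plan is to argue by contradiction and reduce everything to the concentration result Proposition~\ref{prop40}. Suppose the assertion fails. Then, no matter how small we take $d>0$, there exist sequences $\varepsilon_n \to 0^+$ and $u_n \in H_{\varepsilon_n}$ with
\[
u_n \in \Gamma_{\varepsilon_n}^{D_{\varepsilon_n}} \cap \bigl(X_{\varepsilon_n}^{d} \setminus X_{\varepsilon_n}^{d/2}\bigr), \qquad \Gamma'_{\varepsilon_n}(u_n) \to 0 \text{ in } H_{\varepsilon_n}^{*}.
\]

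The first step is to verify that the hypotheses of Proposition~\ref{prop40} are met. By construction $u_n \in X_{\varepsilon_n}^{d}$, while $\Gamma_{\varepsilon_n}(u_n) \le D_{\varepsilon_n}$ and Proposition~\ref{prop2}(i) gives $D_{\varepsilon_n} \to E$. Passing to a subsequence we therefore have $\lim_{n \to \infty} \Gamma_{\varepsilon_n}(u_n) \le E$, and combined with $\Gamma'_{\varepsilon_n}(u_n) \to 0$ this is precisely what is required to invoke Proposition~\ref{prop40}, provided $d>0$ has been fixed small enough to make that proposition applicable.

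Next, I would apply Proposition~\ref{prop40} to extract, along a further subsequence, points $y^i_n \in \R^3$, limit points $x^i \in \M^i$ with $\varepsilon_n y^i_n \to x^i$, and least-energy profiles $U_i \in \mathcal{S}_{m_i}$ for $i=1,\dots,k$, such that
\[
\Bigl\| u_n - \sum_{i=1}^k e^{\iu A(\varepsilon_n y^i_n)(\cdot - y^i_n)} \p_{\varepsilon_n}(\cdot - y^i_n) U_i(\cdot - y^i_n) \Bigr\|_{\varepsilon_n} \longrightarrow 0.
\]
Setting $x^i_n := \varepsilon_n y^i_n$, for $n$ sufficiently large we have $x^i_n \in (\M^i)^{\beta}$ because $x^i_n \to x^i \in \M^i$. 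The summand above is then exactly $U^{x^1_n,\dots,x^k_n}_{\varepsilon_n}$ as defined in the construction of $X_{\varepsilon_n}$, and hence
\[
V_n := \sum_{i=1}^k e^{\iu A(x^i_n)(\cdot - x^i_n/\varepsilon_n)} \p_{\varepsilon_n}(\cdot - x^i_n/\varepsilon_n) U_i(\cdot - x^i_n/\varepsilon_n) \in X_{\varepsilon_n}.
\]
Consequently $\operatorname{dist}_{\|\cdot\|_{\varepsilon_n}}(u_n, X_{\varepsilon_n}) \le \|u_n - V_n\|_{\varepsilon_n} \to 0$, so $u_n \in X_{\varepsilon_n}^{d/2}$ for every large $n$, contradicting the choice of $u_n$.

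The heart of the argument is already encoded in Proposition~\ref{prop40}; the only point that requires attention is the calibration of $d$: it must be taken small enough that the smallness of distance produced by Proposition~\ref{prop40} lies strictly below $d/2$, and small enough that Proposition~\ref{prop40} itself applies. No new analytical ingredient is needed beyond the compactness of $\mathcal{S}_{m_i}$ (Proposition~\ref{prop:compact}), the asymptotic energy identification $D_{\varepsilon_n} \to E$ (Proposition~\ref{prop2}(i)), and the concentration decomposition of Proposition~\ref{prop40}.
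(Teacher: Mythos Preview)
Your proposal is correct and follows essentially the same route as the paper: a contradiction argument that feeds the sequence $(u_{\varepsilon_n})$ into Proposition~\ref{prop40}, then observes that the resulting approximating sum lies in $X_{\varepsilon_n}$, forcing $\operatorname{dist}(u_{\varepsilon_n},X_{\varepsilon_n})\to 0$ and contradicting $u_{\varepsilon_n}\notin X_{\varepsilon_n}^{d/2}$. You have simply made explicit a couple of steps (the use of Proposition~\ref{prop2}(i) to get $D_{\varepsilon_n}\to E$, and the verification that $\varepsilon_n y^i_n\in(\M^i)^\beta$ for large $n$) that the paper leaves implicit.
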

\begin{proof} By contradiction, we suppose that for
$d > 0$ sufficiently small such that Proposition \ref{prop40}
applies, there exist $(\e_j)$ with $\lim_{j \to \infty}\e_j = 0$ and
a sequence $(u_{\e_j})$ with $u_{\e_j} \in X_{\e_j}^{d} \setminus
X_{\e_j}^{d/2}$ satisfying $\lim_{j \to
\infty}\Gamma_{\e_j}(u_{\e_j}) \le E$ and $\lim_{j \to
\infty}\Gamma'_{\e_j}(u_{\e_j}) = 0.$  By Proposition
\ref{prop40}, there exist $(y^i_{\e_j}) \subset \R^3$, $i
=1,\ldots,k,$ $x^i\in \M^i$, $U_i \in S_{m_i}$ such that
\begin{equation*}
\lim_{\e_j \to 0} |\e_j y^i_{\e_j} - x^i| = 0,
\end{equation*}
\begin{equation*}
\lim_{\e_j \to 0} \Big\Vert u_{\varepsilon_j} - \sum_{i=1}^k
e^{iA_{\e_j}(y^i_{\e_j}) (\cdot - y^i_{\e_j})} \p_{\e_j}(\cdot -
y^i_{\e_j}) U_i(\cdot - y^i_{\e_j}) \Big\Vert_{\e_j} = 0.
\end{equation*}
By definition of $X_{\e_j}$ we see that $\lim_{\e_j \to
0}\operatorname{dist}(u_{\e_j},X_{\e_j}) = 0.$ This contradicts that
$u_{\ge_j} \not \in X_{\ge_j}^{d/2}$ and completes the proof.
\end{proof}

From now on we fix a $d>0$ such that Proposition \ref{prop4} holds.

\begin{proposition}\label{prop7}
For sufficiently small fixed $\e >  0,$  $\Gamma_\e$ has a critical
point $u_{\e} \in X_\e^{d} \cap \Gamma_{\e}^{D_{\e}}.$
\end{proposition}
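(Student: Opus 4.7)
The plan is a deformation argument by contradiction, followed by a topological step that conflicts with the level $D_\e$. Suppose that for a sequence $\e_j\to 0$ no critical point of $\Gamma_{\e_j}$ lies in $X_{\e_j}^d\cap\Gamma_{\e_j}^{D_{\e_j}}$. Combining the Palais--Smale type compactness of Proposition~\ref{prop40} with the annular gradient bound of Proposition~\ref{prop4}, one obtains constants $\omega^\ast,\nu>0$ such that
\[
\|\Gamma_\e'(u)\|_\e\geq \omega^\ast\qquad\text{for every } u\in X_\e^d\cap\{E-\nu\leq \Gamma_\e\leq D_\e\},
\]
for all sufficiently small $\e$. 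Indeed Proposition~\ref{prop40} rules out Palais--Smale sequences at energies near $E$ lying in $X_\e^{d/2}$, while Proposition~\ref{prop4} handles the complementary annular region $X_\e^d\setminus X_\e^{d/2}$.

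Next I would build, by standard pseudo--gradient and partition of unity techniques, a locally Lipschitz vector field $V_\e$ on $H_\e$ whose associated negative flow $\eta_t$ is the identity outside a neighborhood of $X_\e^{d/2}\cap\{\Gamma_\e\geq D_\e-\alpha/2\}$, never leaves $X_\e^d$ (the gradient bound of Proposition~\ref{prop4} acting as a barrier across $\partial X_\e^{d/2}$), and decreases $\Gamma_\e$ at a uniform rate inside the relevant region. For some $t_0>0$ the deformed path $\tilde\gamma_\e(s):=\eta_{t_0}(\gamma_\e(s))$ then satisfies $\max_{s\in T}\Gamma_\e(\tilde\gamma_\e(s))\leq D_\e-\delta$ for some $\delta>0$. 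Moreover, Proposition~\ref{prop2}(ii) guarantees that on $\partial T$ the energy stays below $D_\e-\alpha$ for $\e$ small, so the flow is trivial there and $\tilde\gamma_\e\equiv\gamma_\e$ on $\partial T$.

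To derive the contradiction I would appeal to a Brouwer--degree intersection argument. A natural choice is to set $\Theta_\e\colon H_\e\to\R^k$ by
\[
\Theta_\e^i(u)=\int_{\R^3}\phi^i\!\left(y-\tfrac{x_i}{\e}\right)|u(y)|^2\,dy,
\]
where $\phi^i$ is a smooth cutoff concentrated near the origin. The scaling identities used in Proposition~\ref{prop2}(i) show that $s\mapsto\Theta_\e(\gamma_\e(s))$ realizes a continuous map of nonzero degree onto a fixed box $B\subset\R^k$ whose interior contains $\Theta_\e(\gamma_\e(s_0))$, where $s_0$ maximizes $\Gamma_\e\circ\gamma_\e$. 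Since $\tilde\gamma_\e$ agrees with $\gamma_\e$ on $\partial T$, this degree is preserved, so some $s^\ast\in T$ satisfies $\Theta_\e(\tilde\gamma_\e(s^\ast))=\Theta_\e(\gamma_\e(s_0))$; refining the computations behind Proposition~\ref{prop2}(i) one then checks that any $u\in X_\e^d$ with such a concentration profile must have $\Gamma_\e(u)\geq D_\e-o(1)$, in contradiction with $\Gamma_\e(\tilde\gamma_\e(s^\ast))\leq D_\e-\delta$ for $\e$ small.

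The hard part will be the invariance of $X_\e^d$ under the flow: one must tailor the direction of $V_\e$ near $\partial X_\e^d$ so that its inward radial component is strictly positive. This is where the annular gradient bound of Proposition~\ref{prop4} is crucial, jointly with the penalization term $Q_\e$ in the definition of $\Gamma_\e$, which acts as a repulsive barrier preventing mass from leaking out of $O_\e$.
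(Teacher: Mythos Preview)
Your proposal has a genuine gap in the compactness step, and also takes a different route in the linking argument.

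\medskip
\noindent\textbf{The compactness gap.} Your claim that Proposition~\ref{prop40} yields a uniform gradient lower bound on $X_\e^{d/2}\cap\{E-\nu\le\Gamma_\e\le D_\e\}$ is not justified. Proposition~\ref{prop40} concerns sequences along $\e_j\to 0$ and shows that such almost-critical sequences are asymptotically close to the concentration profiles in $X_\e$; it does \emph{not} exclude them, nor does it give any Palais--Smale compactness at fixed $\e$ on the unbounded domain $\R^3$. So even under the contradiction hypothesis ``no critical point in $X_\e^d\cap\Gamma_\e^{D_\e}$'', you cannot conclude $\|\Gamma_\e'\|$ is bounded below there: a Palais--Smale sequence could simply fail to converge. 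The paper circumvents this by first restricting to balls $H^1_0(B(0,R/\e))$, where compact Sobolev embeddings make the deformation argument produce an actual critical point $u^R$ of the restricted functional; a uniform exponential decay estimate (obtained from Kato's inequality and a comparison argument) then allows passage $R\to\infty$ to the critical point $u_\e$ on $\R^3$. This two-step mechanism is essential and is missing from your outline.

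\medskip
\noindent\textbf{The linking step.} For the contradiction with the deformed path, the paper does not use a barycenter/degree argument. Instead it localizes: writing the deformed path as $\gamma=\gamma_1+\gamma_2$ via a cutoff $\psi_\e$ around $O$, it shows $\Gamma_\e(\gamma(s))\ge\Gamma_\e(\gamma_1(s))+o(1)$ (using the penalization bound $\int_{\R^3\setminus O_\e}|\gamma(s)|^2\le C\e^{6/\mu}$ to control the Hartree cross terms), then splits $\gamma_1$ into pieces $\gamma_1^i$ supported in $(O^i)^{2\delta}_\e$, observes that each $s_i\mapsto\gamma_1^i(s)$ lies in the minimax class $\Phi_\e^i$, and invokes Proposition~\ref{upper} to get $\max_{s}\Gamma_\e(\gamma(s))\ge\sum_i C_\e^i\ge E+o(1)$. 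This is a direct ``sum of one-dimensional mountain passes'' lower bound and requires no degree theory. Your degree scheme is plausible in principle, but the crucial implication ``$\Theta_\e(u)=\Theta_\e(\gamma_\e(s_0))$ and $u\in X_\e^d$ forces $\Gamma_\e(u)\ge D_\e-o(1)$'' is precisely the hard step, and you would in the end need something very close to the paper's splitting estimate to prove it. The paper's route is both shorter and better adapted to the nonlocal term, since the cross terms in $\mathbb D$ are handled once, in the cut-off splitting, rather than inside a degree computation.
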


\begin{proof}
We can take $R_0>0$ sufficiently large so that $O \subset B(0,R_0)$
and $\gamma_\varepsilon(s) \in H^1_0(B(0, R/\varepsilon))$ for any
$s \in T$, $R > R_0$ and sufficiently small $\varepsilon
>0$.

We notice that by Proposition \ref{prop2} (iii), there exists
$\alpha \in (0, E- \tilde{E})$ such that for sufficiently small $\e
> 0$,
\[
\Gamma_\e (\gamma_\e(s)) \geq D_\e - \alpha \quad \Longrightarrow
\quad \gamma_\e(s) \in X_{\e}^{d/2} \cap H^1_0(B(0,R/\varepsilon)).
\]
We begin to show that for sufficiently small fixed $\e > 0$, and
$R>R_0$, there exists a sequence $(u_n^R) \subset X_{\e}^{d/2} \cap
\Gamma_{\e}^{D_{\e}} \cap H^1_0(B(0,R/\varepsilon))$ such that
$\Gamma'(u_n^R) \to 0$ in $H^1_0 (B(0,R/\varepsilon))$ as $n \to +
\infty$.

Arguing by contradiction, we suppose that for sufficiently small $\e
> 0,$ there exists $a_R(\e) >0$ such that $|\Gamma_\e'(u)| \geq a_R(\e)$
on $X_\e^{d} \cap \Gamma_{\e}^{D_\e} \cap H^1_0
(B(0,R/\varepsilon))$. In what follows any $u \in H^1_0
(B(0,R/\varepsilon))$ will be regarded as an element in
$H_\varepsilon$ by defining $u=0$ in $\R^3 \setminus
B(0,R/\varepsilon)$. Note from Proposition \ref{prop4} that there
exists $\omega > 0,$ independent of $\e > 0,$ such that
$|\Gamma_\e^\prime(u)| \geq \omega$ for $u \in \Gamma_{\e}^{D_{\e}}
\cap (X_\e^{d} \setminus X_\e^{d/2}).$ Thus, by a deformation
argument in $H^1_0 (B(0,R/\varepsilon))$, starting from $\gamma_\e$,
for sufficiently small $\e
>0$ there exists a $\mu \in (0,\alpha)$ and a path $\gamma \in
C([0, T], H_\e)$ satisfying
\[
\gamma(s) = \gamma_\e(s) \quad\text{for $\gamma_\e(s) \in
\Gamma_\e^{D_\e - \alpha}$}, \quad \gamma(s) \in  X_\e^{d} \quad
\text{ for $\gamma_\e(s) \notin \Gamma_\e^{D_\e - \alpha}$}
\]
and
\begin{equation} \label{61}
\Gamma_\e(\gamma(s)) < D_\e - \mu, \quad s \in T.
\end{equation}

Let $\psi \in C_0^\infty(\RN)$ be such that $\psi(y) = 1$ for $y \in
O^{\delta},$ $\psi(y) = 0$ for $y \notin O^{2\delta},$ $\psi(y) \in
[0,1]$ and $|\nabla \psi| \le 2/\delta.$ For $\gamma(s) \in X_\e^d,$
we define $\gamma_1(s) = \psi_\e\gamma(s)$ and $\gamma_2(s) =
(1-\psi_\e)\gamma(s)$ where $\psi_\e (y) = \psi(\e y)$. The
dependence on $\varepsilon$ will be understood in the notation for
$\gamma_1$ and $\gamma_2$.
 Note that
\begin{align*}
\Gamma_\e(\gamma(s))  &=  \Gamma_\e(\gamma_1(s)) +
\Gamma_\e(\gamma_2(s)) +
\intr \bigl(\psi_\e(1-\psi_\e)|D^\e \gamma(s)|^2 +  V_\e \psi_\e(1-\psi_\e)|\gamma(s)|^2 \bigr)dy \\
&\quad{} + Q_\e(\gamma(s)) - Q_\e(\gamma_1(s)) - Q_\e(\gamma_2(s))  \\
&\quad {} - \frac{1}{4} \int_{\R^3 \times \R^3} W(x-y)
\bigl(|\gamma(s)(x)|^2 |\gamma(s)(y)|^2 -|\gamma_1(s)(x)|^2
|\gamma_1(s)(y)|^2 \\
&\quad \quad{}- |\gamma_1(s)(x)|^2 |\gamma_2(s)(y)|^2 \bigr) dx
\, dy + o(1).
\end{align*}
Since for $A,B \geq 0,$ $(A+B-1)_+ \geq (A-1)_+ + (B-1)_+$, it
follows that
\begin{align*} Q_\e(\gamma(s))
 & =   \Big (\intr \chi_\e|\gamma_1(s)+\gamma_2(s)|^2 dy - 1\Big )_+^{\frac{5}{2}} \\
 & \geq   \Big (\intr \chi_\e|\gamma_1(s)|^2 dy + \intr \chi_\e |\gamma_2(s)|^2 dy - 1\Big )_+^{\frac{5}{2}} \\
 & \geq  \Big (\intr \chi_\e|\gamma_1(s)|^2 dy - 1\Big
)_+^{\frac{5}{2}} +
 \Big (\intr \chi_\e|\gamma_2(s)|^2 dy - 1\Big )_+^{\frac{5}{2}}\\
 & =  Q_\e(\gamma_1(s)) + Q_\e(\gamma_2(s)).
\end{align*}
Now, as in the derivation of (\ref{407}), using the fact that
$Q_\e(\gamma(s))$ is uniformly bounded with respect to $\varepsilon$, we have, for some $C>0$
\begin{equation}
\label{4007} \int_{\RN \setminus O_\e} |\gamma(s)|^2 dy \le C\e^{6 /
\mu}.
\end{equation}
Since $W$ is even, we have
\begin{multline*}
\int\limits_{\R^3 \times \R^3} W(x-y)
\bigl(|\gamma(s)(x)|^2
|\gamma(s)(y)|^2 -|\gamma_1(s)(x)|^2 |\gamma_1(s)(y)|^2 -
|\gamma_1(s)(x)|^2 |\gamma_2(s)(y)|^2 \bigr) dx dy  \\
=2 \int_{O^{ \delta}_\e}  dy \int_{\R^3 \setminus O^{2 \delta}_\e}
 W(x-y) |\gamma(s)(x)|^2 |\gamma(s)(y)|^2 dx
\\ {} + 2
\int_{O^{\delta}_\e} dy  \int_{O^{2 \delta}_\e  \setminus O^{
\delta}_\e}  W(x-y) |\gamma(s)(x)|^2 |\gamma(s)(y)|^2 dx
\\ +
2 \int_{O^{2 \delta}_\e \setminus O^{\delta}_\e} dy \int_{O^{2
\delta}_\e  \setminus O^{\delta}_\e}  W(x-y) |\gamma(s)(x)|^2
|\gamma(s)(y)|^2 dx
\\ +
2 \int_{\R^3 \setminus O^{2 \delta}_\e } dy \int_{O^{2 \delta}_\e
\setminus O^{ \delta}_\e}  W(x-y) |\gamma(s)(x)|^2 |\gamma(s)(y)|^2
dx  \\
= 2 \int_{O^{\delta}_\e}  dy \int_{\R^3 \setminus O^{\delta}_\e}
 W(x-y) |\gamma_2(s)(x)|^2 |\gamma_1(s)(y)|^2 dx
\\ {} + 2
 \int_{\R^3 \setminus O_\e^\delta} dy \int_{O^{2
\delta}_\e  \setminus O^{\delta}_\e}  W(x-y) |\gamma(s)(x)|^2
|\gamma(s)(y)|^2 dx
\end{multline*}
From (\ref{4007}) we deduce that
\begin{equation}\label{tior}
\lim_{\e \to 0} \int_{O^{\delta}_\e}  dy \int_{\R^3 \setminus
O^{\delta}_\e}
 W(x-y) |\gamma_2(s)(x)|^2 |\gamma_1(s)(y)|^2 dx = 0
\end{equation}
and
\begin{equation}\label{tior1}
\lim_{\e \to 0} \int_{\R^3 \setminus O_\e^\delta}  dy \int_{O^{2
\delta}_\e  \setminus O^{\delta}_\e}
 W(x-y) |\gamma_2(s)(x)|^2 |\gamma_1(s)(y)|^2 dx = 0
\end{equation}

From (\ref{tior}) and (\ref{tior1}) we have (recall that $\gamma_1$ and $\gamma_2$ depend on $\varepsilon$)
\begin{multline*}
\int\limits_{\R^3 \times \R^3} \bigl|W(x-y) \bigl(|\gamma(s)(x)|^2
|\gamma(s)(y)|^2 -|\gamma_1(s)(x)|^2 |\gamma_1(s)(y)|^2 -
|\gamma_1(s)(x)|^2 |\gamma_2(s)(y)|^2 \bigr) \bigr| dx dy  \\
=o(1).
\end{multline*}
  Thus, we see that, as $\e \to 0$,
\[
\Gamma_\e(\gamma(s)) \geq \Gamma_\e(\gamma_1(s)) +
\Gamma_\e(\gamma_2(s)) + o(1).
\]
 Also
\[
\Gamma_\e(\gamma_2(s)) \geq - \frac{1}{4} \int_{(\R^3 \setminus O_\e) \times (\R^3 \setminus O_\e)}
W(x-y) |\gamma_2(s)(x)|^2 |\gamma_2(s)(y)|^2 \, dx\, dy  \geq o(1).
\]
Therefore it follows that
\begin{equation}
\label{62} \Gamma_\e(\gamma(s)) \geq  \Gamma_\e(\gamma_1(s)) + o(1).
\end{equation} \smallskip

For $i = 1,\ldots,k,$ we define
\[
\gamma_1^{i}(s)(y) =
\begin{cases}
\gamma_1(s)(y) &\text{for $y\in$ $(O^i)^{2\delta}_\e$} \\
0 &\text{for $y \notin (O^i)^{2\delta}_\e$}.
\end{cases}
\]
 Note
that $(A_1 +\cdots +A_n - 1)_+ \geq \sum_{i=1}^n(A_i-1)_+$ for
$A_1,\ldots,A_n \geq 0$. Then we see that
\begin{equation}\label{63}
\Gamma_\e(\gamma_1(s)) \geq
\sum_{i=1}^k\Gamma_\e(\gamma_1^{i}(s))
 = \sum_{i=1}^k\Gamma^i_\e(\gamma_1^{i}(s)).
\end{equation}
From Proposition \ref{prop2} (ii) and since $0<\alpha <E- \tilde{E}$
we get that $\gamma_1^{i} \in \Phi_\e^i$, for all $i \in \{1,
\ldots, k\}$. Thus by Proposition 3.4 in \cite{CR1}, Proposition
\ref{upper}, and (\ref{63}) we deduce that, as $\e \to 0$,
\[
\max_{s \in T}\Gamma_\e(\gamma(s)) \geq E + o(1).
\]
Since $\limsup_{\e \to 0}D_\e \le E$ this contradicts~\eqref{61}.

Now let $(u_n^R)$ be a Palais-Smale sequence corresponding to a
fixed small $\e > 0$. Since $(u_n^R)$ is bounded in
$H^1_0(B(0,R/\epsilon)),$ and by Corollary
\ref{equiv-norm-consequence}, we have that, up to subsequence,$u_n^R
$ converges strongly to $u^R$ in $H^1_0(B(0,R/\epsilon))$.
We observe that $u^R$ is a critical point of $\Gamma_\varepsilon$ on
$H^1_0(B(0,R/\epsilon)),$ and it solves
\begin{multline} \label{onballs}
\bigg( \frac{1}{\iu} \nabla - A_\e \bigg)^2  u^R + V_\e u^R \\
= \left( W
* |u^R|^2 \right) u^R  - 5 \, \Big(\int\chi_\e
|u^R|^2 dy - 1\Big)_+^{\frac{3}{2}}\chi_\e u^R \ \textrm{ in } \
B(0, R/\epsilon).
\end{multline}
Exploiting Kato's inequality,
\[
\Delta |u^R| \geq - \re \bigg( \frac{\bar{u^R}}{|u^R|}\bigg(
\frac{\nabla}{\iu} - A_\e \bigg)^2 u^R\bigg)
\]
we obtain
\begin{equation} \label{katoine}
\Delta |u^R| \geq V_\e |u^R| - \left( W * |u^R|^2 \right) |u^R|+ 5
\,\Big(\int\chi_\e |u^R|^2 dy - 1\Big)_+^{\frac{3}{2}}\chi_\e |u^R|
\ \textrm{ in } \ \RN.
\end{equation}
Moreover by Moser iteration it follows that $\|u^R\|_{L^\infty}$ is
bounded. Since $(Q_\epsilon(u^R))$ is uniformly bounded for
$\epsilon>0$ small, we derive that $ \left( W * |u^R|^2 \right)
|u^R| \leq \frac{1}{2} V_\ge |u^R(y)|$ if $|y| \geq 2 R_0$. Applying
a comparison principle we derive that
\begin{equation}\label{decay}
|u^R(y)| \leq C \exp(-(|y| - 2 R_0))
\end{equation}
for some $C>0$ independent of $R > R_0$. Therefore as $(u^R)$ is
bounded in $H_\epsilon$ we may assume that it weakly converges to
some $u_\epsilon$ in $H_\epsilon$ as $R \to + \infty$. Since $u^R$
is a solution of $(\ref{onballs})$, we see from $(\ref{decay})$ that
$(u^R)$ converges strongly to $u_\epsilon \in X_\epsilon \cap
\Gamma^{D_\epsilon}_\epsilon$ and it solves
\begin{equation} \label{101}
\bigg( \frac{1}{\iu} \nabla - A_\e \bigg)^2  u_\e + V_\e u_\e = \left(
W * |u_\varepsilon|^2 \right) u_\varepsilon  - 5 \,\Big(\int\chi_\e
|u_\e|^2 dy - 1\Big)_+^{\frac{3}{2}}\chi_\e u_\e \ \textrm{in $\RN$}.
\end{equation}
\end{proof}

\subsection{Proof for the main result}

We see from Proposition \ref{prop7} that there exists $\e_0 > 0$ such
that, for $\e \in (0,\e_0),$ $\Gamma_\e$ has a critical point $u_\e
\in X_\e^d \cap \Gamma_\e^{D_\e}.$  Exploiting Kato's inequality
\[
\Delta |u_\e| \geq - \re \bigg( \frac{\bar{u_\e}}{|u_\e|}\bigg(
\frac{\nabla}{\iu} - A_\e \bigg)^2 u_\e\bigg)
\]
we obtain
\begin{equation} \label{102}
\Delta |u_\e| \geq V_\e |u_\e| - \left( W * |u_\varepsilon|^2
\right) |u_\varepsilon |+ 5 \Big(\int\chi_\e |u_\e|^2 dy -
1\Big)_+^{\frac{3}{2}}\chi_\e |u_\e| \ \textrm{ in } \ \RN.
\end{equation}
Moreover, by (\ref{eq:tutti_q}) and the subsequent bootstrap arguments, we deduce that $u_\ge \in L^q(\mathbb{R}^3)$ for any $q>2$. Hence a Moser iteration scheme shows that $(\| u_\e
\|_{L^\infty})$ is bounded. Now by Proposition \ref{prop40}, we see that
\[
\lim_{\e \to 0} \int_{\RN \setminus (\M^{2\beta})_\e}|D^\e u_\e|^2
+ \tilde V_\e|u_\e|^2 dy = 0,
\]
and thus, by elliptic estimates (see \cite{GT}), we obtain that
\begin{equation} \label{103}
\lim_{\e \to 0} \Vert  u_\e  \Vert_{L^\infty(\RN \setminus
(\M^{2\beta})_\e)} = 0. \end{equation} This gives the following
decay estimate for $u_\e$ on $\R^3 \setminus (\M^{2\beta})_\e \cup
(\Z^\beta)_\e$
\begin{equation} \label{104}
 |u_\e(x)| \le C \exp(-c \operatorname{dist}(x, (\M^{2\beta})_\e \cup
(\Z^{\beta})_\e))
\end{equation}
for some constants $C, c >0$. Indeed from
(\ref{103}) we see that
\[
\lim_{\e \to 0}   \| W * |u_\ge|^2\|_{L^\infty (\R^3 \setminus
(\M^{2\beta})_\e \cup (\Z^{\beta})_\e)} =0.
\]
Also $\inf \{V_\e(y) : y \notin (\M^{2\beta})_\e \cup
(\Z^{\beta})_\e \} >0$. Thus, we obtain the decay estimate
(\ref{104}) by applying standard comparison principles to (\ref{102}). \medskip

If $\Z \neq \emptyset$ we shall need, in addition, an estimate for
$|u_\e|$ on $(\Z^{2\beta})_\e$. Let $\{H^i\}_{i \in I}$ be the
connected components of int$(\Z^{3\delta})$ for some index set
$I.$ Note that $\Z \subset \bigcup_{i \in I}H^i$ and $\Z$ is
compact. Thus, the set $I$ is finite. For each $i \in I,$ let
$(\phi^i, \lambda^i_1)$ be a pair of first positive eigenfunction
and eigenvalue of $- \Delta$ on $(H^i)_\e$ with Dirichlet boundary
condition. From now we fix an arbitrary $i \in I$. By using the fact that
$(Q_\e(u_\e))$ is bounded we see that for some constant $C>0$
\begin{equation} \label{l-a}
 \Vert  u_\e  \Vert_{L^3 ((H^i)_\e)} \le C  \e^{3 / \mu}.
\end{equation}
Thus, from the Hardy-Littlewood-Sobolev inequality we have, for some $C>0$
\[
\|W*
|u_\e|^2 \|_{L^\infty ((H^i)_\e)} \leq C \|u_\varepsilon\|^2_{L^3 ((H^i)_\e)} \leq C \e^6.
\]
Denote $\phi^i_\e(y) = \phi^i(\e y)$. Then, for sufficiently small
$\e > 0$, we deduce that for $y \in \textrm{int}((H^i)_\e)$,
\begin{equation} \label{l-b}
\Delta \phi^i_\e (y) -V_\e(x)\phi^i_\e(y) + (W*|u_\e(y)|^2)\phi^i_\e
(y) \le \Big ( C \e^6 - \lambda_1 \e^2 \Big) \phi^i_\e \leq 0 .
\end{equation}
Now, since $\operatorname{dist}(\partial (\Z^{2\beta})_\e,
(\Z^{\beta })_\e) = \beta /\e$, we see from (\ref{104}) that for
some constants $C, c
>0$,
\begin{equation} \label{l-c}
\|u_\e\|_{L^{\infty} (\partial (\Z^{2 \beta})_\e)} \le C \exp (- c
/\e).
\end{equation}

We normalize $\phi^i$ requiring
that
\begin{equation} \label{l-d}
\inf_{y \in (H^i)_\e \cap
\partial (\Z^{2 \delta})_\e} \phi^i_\e(y) = C \exp( -c /\e)
\end{equation}
for the same  $C$, $c>0$ as in (\ref{l-c}). Then, we see that for some $\kappa  > 0$,
\[
\phi^i_\e(y) \le \kappa C \exp(-c/\e), \quad y \in (H^i)_\e \cap
(\Z^{2\beta})_\e.
\]
Now we deduce, using (\ref{l-a}), (\ref{l-b}), (\ref{l-c}),
(\ref{l-d}) that for each $i \in I,$
$|u_\e| \leq \phi^i_\e$ on $(H^i)_\e \cap (\Z^{2 \beta})_\e$.
Therefore
\begin{equation} \label{105}
|u_\e (y)| \leq C \exp (- c /\e),\,\, \mbox{ on } (\Z^{2 \delta})_\e
\end{equation}
for some $C, c > 0$. Now (\ref{104}) and (\ref{105}) implies
 that $Q_\e(u_\e) = 0$ for $\e >0$ sufficiently small and thus $u_\e$ satisfies (\ref{eq:Hartree-scaled}).
Now using Propositions~\ref{prop:compact} and~\ref{prop40}, we readily
deduce that the properties of $u_\e$ given in Theorem \ref{main}
hold. Here, in (\ref{eq:1.5}) we also use
Lemma \ref{levels}. $\Box$

\bigskip
\medskip

\bigskip


\begin{thebibliography}{99}

\bibitem{abpr}
M.J.\ Ablowitz, B.\ Prinari, A.D.\ Trubatch,
\textit{Discrete and continuous nonlin\-ear Schr\"o\-din\-ger systems},
London Mathematical Society, Lecture Note Series, {\bf 302} Cambridge University Press, 2004.

\bibitem{bebrcr}
P.\ Benilan, H.\ Brezis, M.G.\ Crandall,
\textit{A semilinear equation in $L^1(\R^N)$},
Ann. Scuola Norm. Sup. Pisa Cl. Sci. (4) {\bf 2} (1975), 523--555.

\bibitem{BJ1} J.~Byeon, L.\ Jeanjean, \textit{Standing waves  for nonlinear Schr\"odinger equations with a general
nonlinearity},  Arch.  Ration. Mech. Anal. {\bf 185} (2007),
185-200.

\bibitem{BJ12} J.~Byeon, L.\ Jeanjean, \textit{Erratum: Standing waves  for nonlinear Schr\"odinger equations with a general
nonlinearity},  Arch.  Ration. Mech. Anal. DOI
10.1007/s00205-006-0019-3.

\bibitem{BJ2} J.~Byeon, L.\ Jeanjean, \textit{Multi-peak standing waves for nonlinear  Schr\"odinger equations
with a general nonlinearity},  Discrete Cont. Dyn. Systems {\bf 19}
(2007), 255--269.

\bibitem{BJT} J.~Byeon, L.\ Jeanjean, K.\ Tanaka,
\textit{Standing waves for nonlinear  Schr\"odinger equations with a
general nonlinearity: one and two dimensional cases},  Comm. Partial
Differential Equations \textbf{33} No.~6 (2008), 1113--1136.

\bibitem{BW1} J.~Byeon, Z.-Q. Wang, \textit{Standing waves with critical
frequency for nonlinear Schr\"odinger equations}, Arch. Rat. Mech.
Anal. {\bf 165} (2002), 295--316.


\bibitem{BW2} J.~Byeon, Z.-Q. Wang \textit{Standing waves with critical
frequency for nonlinear Schr\"odinger equations II}, Calc. Var.
Partial Differ. Equ. {\bf 18} (2003), 207--219.

\bibitem{carlesmauser}
R.\ Carles, N.\ Mauser, H.\ Stimming,
\textit{(Semi)classical limit of the Hartree equation with harmonic potential},
SIAM J. Appl. Math. {\bf 66} (2005), 29--56.

\bibitem{cazenavB}
T.\ Cazenave, \textit{Semilinear Schr\"{o}dinger equations}, Courant Lecture Notes in Mathematics \textbf{10}, New York, 2003.

\bibitem{CL-cmp}
T.\ Cazenave, P.L.\ Lions, \textit{Orbital stability of standing waves for some nonlinear
Schr\"odinger equations}, Commun.\ Math.\ Phys. {\bf 85} (1982), 549-561.

\bibitem{cs-jmp}
S.\ Cingolani, S.\ Secchi,
\textit{Semiclassical states for NLS equations with magnetic potentials having polynomial growths},
J.\ Math.\ Phys.\ {\bf 46} (2005), 19pp.

\bibitem{cjs}
S.~Cingolani, L.~Jeanjean, S.~Secchi, \textit{Multi-peak solutions
for magnetic NLS equations without non--degeneracy conditions},
ESAIM Control. Optim. Calc. Var. \ {\bf 15} (2009), 653-675.

\bibitem{CR1}
    V. Coti Zelati, P.H. Rabinowitz,
\textit{Homoclinic orbits for second order Hamiltonian systems possessing superquadratic potentials},
      J.\ Amer.\ Math.\ Soc., {\bf 4} (1991), 693--727.

\bibitem{erdosyau}
L.\ Erd\"os, H.-T.\ Yau,
\textit{Derivation of the nonlinear Schr\"odinger equation from a many body Coulomb system},
Adv.\ Theor.\ Math.\ Phys.\ {\bf 5} (2001), 1169--1205.

\bibitem{EL} M.J.~Esteban, P.L.~Lions, \textit{Stationary solutions of nonlinear
{S}chr\"{o}dinger equations with an external magnetic field}, in PDE
and Calculus of Variations, in honor of E.~De~Giorgi,
Birkh\"{a}user, 1990.

\bibitem{FTY}
J.\ Fr\"ohlich, T.-P.\ Tsai, H.-T. Yau,
\textit{On the point-particle (Newtonian) limit of the non-linear Hartree equation},
Comm.\ Math.\ Phys. {\bf 225}  (2002),  223-274.

\bibitem{GT} D. Gilbarg, N. Trudinger,
Elliptic partial differential equations of the second order. Springer--Verlag, 1983.

\bibitem{JT2} L. Jeanjean, K. Tanaka, \textit{A remark on least energy solutions in $\R^N$}, Proc. Amer. Math. Soc.
 {\bf 131} (2003), 2399--2408.

\bibitem{liebchoq}
E.H.\ Lieb,
\textit{Existence and uniqueness of the minimizing solution of Choquard's nonlinear equation},
Studies in Appl. Math. {\bf 57} (1976/77), 93--105.

\bibitem{ll}
E.H.\ Lieb, M. Loss, Analysis. AMS, Providence, 1997.

\bibitem{Lionchoq}
P.L. Lions, \textit{The Choquard equation and related questions},
Nonlinear Anal.\  {\bf 4}  (1980), 1063--1072.

\bibitem{L} P.L.~Lions, \textit{The concentration--compactness
    principle in the calculus of variations. The locally compact case.
    Part II}, Annales Inst. H. Poincar\'e Anal. Nonlin. {\bf 1}
  (1984), 223--283.

\bibitem{macri} M.~Macr\`\i, M.~Nolasco, \textit{Stationary solutions for the non--linear Hartree equations with a slowly varying potential}, to appear on Nonlinear Differ. Equ. Appli., DOI 10.1007/s00030-009-0030-0.

\bibitem{moroz}
I.M.\ Moroz, P.\ Tod, \textit{An analytical approach to the Sch\"odinger-Newton equations},
Nonlinearity {\bf 12} (1999), 201--216.

\bibitem{mpt}
I.M.~Moroz, R.~Penrose, P.~Tod, \textit{Spherically-symmetric solutions of the {S}chr\"odinger-{N}ewton equations}, Classical Quantum Gravity \textbf{15} (1998), 2733--2742.

\bibitem{p}
R.~Penrose, \textit{Quantum computation, entanglement and state reduction}, R. Soc. Lond. Phys. Trans. Ser. A, Math. Phys. Eng. Sci. \textbf{356} (1998), 1927--1939.

\bibitem{stein}
E.M.~Stein, \emph{Singular integrals and differentiability properties of functions}, Princeton University Press, Princeton, N.J., 1970, Princeton Mathematical Series, No.~30.

\bibitem{sulemsulem}
P.L.\ Sulem, C.\ Sulem,
\textit{The nonlinear Schr\"odinger equation. Self-focusing and wave collapse},
Applied Mathematical Sciences, {\bf 139}. Springer-Verlag, New York, 1999.

\bibitem{wei} J. Wei, M. Winter, \textit{Strongly interacting bumps for the Schr\"{o}dinger--Newton equation},
J. Math.Phys.  {\bf 50} (2009), 012905.

\end{thebibliography}
\end{document}